\begin{document}
%----------------------------------------------------------------------
% Author macros
%----------------------------------------------------------------------
\mathchardef\Gamma="7100
\mathchardef\Sigma="7106
\mathchardef\Omega="710A
\mathchardef\Upsilon="7107
\newcommand\esA{{\EuScript A}}
\newcommand\esB{{\EuScript B}}
\newcommand\esC{{\EuScript C}}
\newcommand\esD{{\EuScript D}}
\newcommand\esE{{\EuScript E}}
\newcommand\esF{{\EuScript F}}
\newcommand\esG{{\EuScript G}}
\newcommand\esH{{\EuScript H}}
\newcommand\esI{{\EuScript I}}
\newcommand\esJ{{\EuScript J}}
\newcommand\esK{{\EuScript K}}
\newcommand\esL{{\EuScript L}}
\newcommand\esM{{\EuScript M}}
\newcommand\esN{{\EuScript N}}
\newcommand\esO{{\EuScript O}}
\newcommand\esP{{\EuScript P}}
\newcommand\esQ{{\EuScript Q}}
\newcommand\esR{{\EuScript R}}
\newcommand\esS{{\EuScript S}}
\newcommand\esT{{\EuScript T}}
\newcommand\esU{{\EuScript U}}
\newcommand\esV{{\EuScript V}}
\newcommand\esW{{\EuScript W}}
\newcommand\esX{{\EuScript X}}
\newcommand\esY{{\EuScript Y}}
\newcommand\esZ{{\EuScript Z}}
\newcommand\reals{{\mathbb R}}
\newcommand\Reals{{\mathbb R}}
\newcommand\Rt{ {\Reals^3} }
\newcommand\Rn{ {\Reals^n} }
\newcommand\Rk{ {\Reals^k} }
\newcommand\Rkk{ {\Reals^{k\times k}} }
\newcommand\veps{ {{\epsilon}} }
\newcommand\tstyle{ \textstyle }
\newcommand\hxi{ \smash{\widehat\xi} }
\newcommand\diam{\mathop{\rm diam}\nolimits}
\newcommand\dist{\mathop{\rm dist}\nolimits}
\newcommand\supp{\mathop{\rm supp}\nolimits}
\newtheorem{theorem}{Theorem}[section]
\newtheorem{lemma}[theorem]{Lemma}
%
%----------------------------------------------------------------------
% Front matter
%----------------------------------------------------------------------

%\copyrightinfo{2012}{American Mathematical Society}

\title
[A convergence theorem for a class of Nystr{\"o}m methods]
{A convergence theorem for a class of Nystr{\"o}m methods 
for weakly singular integral equations on surfaces in $\Rt$}

\author[O. Gonzalez]{Oscar Gonzalez}
\address{Department of Mathematics, 
The University of Texas at Austin,
Austin, TX 78712}
\curraddr{}
\email{og@math.utexas.edu}
\thanks{This work was supported by 
the National Science Foundation.}

\author[J. Li]{Jun Li}
\address{Graduate Program in Computational and Applied 
Mathematics, The University of \break \indent Texas at 
Austin, Austin, TX 78712}
\curraddr{Schlumberger Corporation, Houston, TX}
\email{JLi49@slb.com}
\thanks{}

\subjclass[2010]{Primary 65R20, 65N38; Secondary 45B05, 
31B20} 
\date{\today}

\begin{abstract}
A convergence theorem is proved for a class of Nystr{\"o}m
methods for weakly singular integral equations on surfaces
in three dimensions.  Fredholm equations of the second kind 
as arise in connection with linear elliptic boundary value 
problems for scalar and vector fields are considered.  In 
contrast to methods based on product integration, coordinate 
transformations and singularity subtraction, the family 
of Nystr{\"o}m methods studied here is based on a 
local polynomial correction determined by an auxiliary 
system of moment equations.  The polynomial
correction is shown to remove the weak singularity 
in the integral equation and provide control over
the approximation error.  Convergence results for the 
family of methods are established under minimal regularity 
assumptions consistent with classic potential theory.  
Rates of convergence are shown to depend on the regularity 
of the problem, the degree of the polynomial correction, 
and the order of the quadrature rule employed in the 
discretization.  As a corollary, a simple method based 
on singularity subtraction which has been employed by 
many authors is shown to be convergent.
\end{abstract}

\maketitle

%----------------------------------------------------------------------
% Body
%----------------------------------------------------------------------

\section{Introduction}

The method of integral equations has a long and rich
history in both the analysis and numerical treatment
of boundary value problems 
\cite{Atkinson:1997,Brebbia:1984,Chen:1992,Goldberg:1997,
Gunter:1967,Hackbush:1995,Hsaio:2008,Kellogg:1929,
Kress:1989,Rjasanow:2007}.
On the analysis side, the method provides a classic 
approach to the study of existence and uniqueness 
questions.  On the numerical side, the method provides 
an alternative formulation which can be efficiently 
discretized.  In the basic approach, a boundary value 
problem described by a partial differential equation 
on a domain of interest is reduced to an integral
equation on the bounding surface.  The unknown 
scalar or vector field throughout the domain 
is represented in terms of one or more integral 
potentials which depend on an unknown surface 
density.  The representation can take various 
different direct and indirect forms and the 
integral potentials usually involve kernels 
that are at least weakly singular.  The reduction 
in dimension makes the method of integral equations 
extremely attractive, especially for problems on 
exterior domains.  Indeed, the method has been 
used in many different modern applications in
acoustics and electromagnetics 
\cite{Colton:1998,Nedelec:2001},
hydrodynamics 
\cite{Power:1995,Pozrikidis:1992}, 
elastostatics 
\cite{Jaswon:1977, Linkov:2002},
and molecular modeling 
\cite{Allison:1998,Aragon:2006,Bardhan:2009,
Brune:1993,Gonzalez:2008}.

Once a problem has been reduced to an integral
equation, various different types of methods are
available for its numerical resolution 
\cite{Atkinson:1997,Goldberg:1997,Hackbush:1995,
Kress:1989,Rjasanow:2007}.  In Galerkin methods, 
the unknown density and integral equation are 
projected onto a finite-dimensional subspace 
of functions on the surface, where the projection 
is defined through an inner-product on the space.  
Basis functions for the space may be piecewise 
polynomials, splines or wavelets with local supports
\cite{Alpert:1993,Atkinson:1997,Goldberg:1997,Lage:1999}, 
or related eigenfunctions with global supports
\cite{Atkinson:1982,Ganesh:2004,Ganesh:1998,Graham:2002}.  
A method is completed through 
the specification of appropriate quadrature formulae 
for the evaluation of double surface integrals in the 
inner-product.  In collocation methods, the unknown 
density and integral equation are projected similar to 
before, but the projection is defined through pointwise 
interpolation on the surface.  As a consequence, only 
single surface integrals arise, which is computationally
attractive.  In Nystr{\"o}m methods, the integrals in 
the integral equation are directly approximated by a 
quadrature rule, which leads to a direct approximation
of the unknown density at the quadrature nodes, which
can then be interpolated over the surface.  For problems 
with continuous kernels, the Nystr{\"o}m approach leads 
to simple, efficient and well-understood schemes, and 
for problems with weakly singular kernels, various 
modifications are required to handle the singularities, 
such as product integration, coordinate transformations
and singularity subtraction
\cite{Anselone:1981,Atkinson:1997,Bruno:2001,
Canino:1998,Goldberg:1997,Kress:1989,Li:submitted,
Rathsfeld:2000,Ying:2006}.

In this article, we study a class of modified 
Nystr{\"o}m methods based on local polynomial
corrections.  The methods studied here share 
some similarities with product integration 
and singularity subtraction methods, but have 
various important differences.  The basic approach 
is to employ standard quadrature weights away from 
the diagonal of the weakly singular kernel, and 
generalized or corrected weights near the diagonal.  
In product integration methods, the corrected
weights are defined so as to include the action
of the kernel on a mesh-dependent basis of 
interpolating polynomials.  In the methods 
considered here, the corrected weights include 
the action of the kernel on a mesh-independent 
basis of local floating polynomials, and furthermore 
the weights themselves are expressed as the pointwise 
values of such a polynomial. The local polynomial that 
generates the corrected weights is determined through 
an auxiliary system of moment equations.  Such a local 
polynomial correction is shown to implicitly regularize 
the weak singularity in the kernel, as would occur in 
a singularity subtraction method, and provide control 
over the approximation error.

We establish a convergence theorem for a family of 
modified Nystr{\"o}m methods for integral equations on 
surfaces in three dimensions.  We restrict attention 
to Fredholm equations of the second kind with weakly 
singular kernels as arise in connection with linear 
elliptic boundary value problems for scalar and vector 
fields.  In contrast to methods based on product integration, 
coordinate transformations and singularity subtraction, 
we consider methods based on a local polynomial correction 
as described above.  Convergence results in the standard 
maximum norm are established under minimal regularity 
assumptions consistent with classic potential theory.
Rates of convergence are shown to depend on the 
regularity of the surface and data, the degree of 
the local polynomial correction, and the order of 
the underlying quadrature rule.  In the minimal 
regularity case, there is no lower bound on the rate, 
and in the smooth case, there is no upper bound on 
the rate.  Indeed, arbitrarily high rates of 
convergence can be achieved with local polynomial 
corrections of arbitrarily high degree.  Various 
important assumptions on the weakly singular kernel 
and the quadrature rule are discussed in detail.  
As a corollary of our result, we show that a simple 
method based on singularity subtraction which has 
been employed by many authors is convergent under 
the same assumptions.

The mathematical theory of convergence of Nystr{\"o}m
methods of integral equations of the second kind on 
regular surfaces is a well-studied subject.  The 
convergence theory for standard methods on
problems with continuous kernels is classic
\cite{Atkinson:1997,Goldberg:1997,Kress:1989}.  Similarly,
the theory for product integration methods on problems
with weakly singular kernels is also well-established
\cite{Atkinson:1997,Goldberg:1997,Kress:1989}, although 
some of the hypotheses may be difficult to verify in
three-dimensional problems. Convergence results
for methods based on floating polar coordinate
transformations are described in
\cite{Bruno:2001,Kunyansky:submitted,Ying:2006}.
Convergence results for methods based on the classic
idea of singularity subtraction as introduced by
Kantorovich and Krylov \cite{Krylov:1958} have been 
established for one-dimensional problems in 
\cite{Anselone:1981}, and have been studied for 
higher-dimensional problems in \cite{Rathsfeld:2000}.  
Methods similar to those introduced here were 
considered in \cite{Canino:1998}, but no 
mathematical convergence results were given.  
Numerical experiments which illustrate several
aspects of the lowest-order method of the family 
introduced here can be found in \cite{Li:submitted}.
Related convergence results under different regularity
assumptions are given in \cite{Li:2010}.

The presentation is structured as follows.  In Section 2
we define the class of boundary integral equations to
be studied.  We outline various important assumptions on 
the surface and kernels associated with the equation, 
and introduce notation and results that will be needed 
throughout our developments. In Section 3 we define 
our family of Nystr{\"o}m methods with local polynomial 
corrections as described above. We outline essential 
assumptions on the quadrature rule and other elements 
of the method and then state our main result.  In 
Section 4 we provide a proof of our result.  The
proof is based on the theory of collectively compact 
operators and several technical lemmas; the latter
are established herein and may be of independent 
interest.

\section{Boundary integral equation}
\label{IntEqnSect}

\subsection{Preliminaries, notation}

Throughout our developments, we consider a surface $\Gamma$ 
in $\Reals^3$ consisting of a finite number of disjoint,
closed, bounded and orientable components.  We invoke 
the standard assumptions of classic potential theory 
\cite{Gunter:1967,Kellogg:1929} and assume 
that $\Gamma$ is a Lyapunov surface.  Thus:
\begin{itemize}
\item[(L1)]
there exists a well-defined outward unit normal $\nu(x)$ 
and tangent plane $T_x\Gamma$ at every $x\in\Gamma$,
\item[(L2)]
there exists constants $C>0$ and $0<\lambda\le 1$ such that 
$\theta(\nu(x),\nu(y))\le C|x-y|^\lambda$ for all
$x,y\in\Gamma$, where $\theta(\nu(x),\nu(y))$ is the angle
between $\nu(x)$ and $\nu(y)$, and $|x-y|$ denotes the
Euclidean distance,
\item[(L3)]
there exists a constant $d>0$ such that, for every 
$x\in\Gamma$, the subset $\Gamma\cap B(x,d)$ is a graph 
over $T_x\Gamma$, where $B(x,d)$ denotes the closed ball 
of radius $d$ centered at $x$.
\end{itemize}
We refer to $\lambda$ and $d$ as a Lyapunov exponent and 
radius associated with $\Gamma$, and for any $x\in\Gamma$, 
refer to $B(x,d)$ as a Lyapunov ball at $x$.  Notice that, 
if (L2) and (L3) hold for some values of $\lambda$ and 
$d$, then they also hold for all smaller values.  We 
assume that values for $\lambda$ and $d$ are fixed once
and for all, and for simplicity we assume from the outset
that $\lambda=1$.

For any $x_0\in\Gamma$, we use $\Gamma_{x_0,d}$ to denote 
the portion of $\Gamma$ within the Lyapunov ball at $x_0$, 
and use $\Omega_{x_0,d}$ to denote the image of $\Gamma_{x_0,d}$ 
on $T_{x_0}\Gamma$ under projection parallel to $\nu(x_0)$.  
We refer to $\Gamma_{x_0,d}$ as the Lyapunov patch at 
$x_0$.  Without loss of generality, we identify $\Omega_{x_0,d}$ 
with a subset of $\Reals^2$, and identify $x_0$ with the origin.  
We reserve the notation $T_{x_0}\Gamma$ to indicate the tangent
plane considered as a subspace of $\Rt$.  The Lyapunov 
condition (L3) implies that the map 
\begin{equation}
   y=\psi_{x_0}(\xi),
\quad
y\in\Gamma_{x_0,d},
\quad
\xi\in \Omega_{x_0,d},
\label{LocCarCoords}
\end{equation}
defined by projection parallel to $\nu(x_0)$, is a 
bijection.  We refer to $y=\psi_{x_0}(\xi)$ with 
inverse $\xi=\psi^{-1}_{x_0}(y)$ as a local 
Cartesian parameterization of $\Gamma$ at $x_0$.  
When there is no cause for confusion, we abbreviate
$\psi_{x_0}(\xi)$ by $y_{x_0}(\xi)$, and abbreviate
$\psi^{-1}_{x_0}(y)$ by $\xi_{x_0}(y)$.  For any
$x_0\in\Gamma$ and $y\in\Gamma_{x_0,d}$, the local 
Cartesian coordinates $\xi_{x_0}(y)$ are uniquely
defined up to the choice of orthonormal basis 
$\{t_{x_0,1},t_{x_0,2}\}$ in $T_{x_0}\Gamma$.  
Specifically, we have
\begin{equation}
\xi_{\alpha} = \psi^{-1}_{x_0,\alpha}(y) 
     = (y-x_0)\cdot t_{x_0,\alpha},
\quad
\alpha=1,2.
\end{equation}

Throughout our analysis, we will also have need to consider 
a local polar parameterization of $\Gamma$ at $x_0$ of the 
form
\begin{equation}
y=\psi_{x_0}^{\rm polar} (\rho,\hxi)
 =\psi_{x_0}(\varpi(\rho,\hxi)),
\quad
y\in\Gamma_{x_0,d},
\quad
(\rho,\hxi)\in\Omega_{x_0,d}^{\rm polar},
\label{LocPolCoords}
\end{equation}
where $\smash{\Omega_{x_0,d}^{\rm polar}}$ is a subset of
$\Reals_+\times S$.  Here $\Reals_+$ denotes the 
non-negative reals and $S$ denotes the unit circle.  
Notice that the map $\xi=\varpi(\rho,\hxi)=\rho\hxi$ 
is one-to-one for all $\xi\ne 0$, with inverse 
$(\rho,\hxi)=\varpi^{-1}(\xi)=(|\xi|,\xi/|\xi|)$, 
and is many-to-one at $\xi=0$, with  
$0=\varpi(0,\hxi)$ for all $\hxi\in S$.  For
any $x_0\in\Gamma$ and $y\in\Gamma_{x_0,d}$ with
$y\ne x_0$ the map in (\ref{LocPolCoords}) is
invertible.  Specifically, for any choice of orthonormal 
basis $\{t_{x_0,1},t_{x_0,2}\}$ in $T_{x_0}\Gamma$, we 
have
\begin{equation}
(\rho,\hxi) = [\psi_{x_0}^{\rm polar}]^{-1}(y) 
\quad\hbox{\rm where}\quad
\rho=|y-x_0|,
\;
\hxi_{\alpha} = \frac{(y-x_0)_{\alpha}}{|y-x_0|},
\;
\alpha=1,2.
\end{equation}

We use $C^m(U,V)$ to denote the space of $m$-times
continuously differentiable functions from 
$U\subset\Rn$ into $V\subset\Rk$,
and $C^{m,1}(U,V)$ to denote the subspace of 
functions whose derivatives up to order $m$ are 
Lipschitz.  When $U$ is not an open set or the 
closure of an open set in $\Rn$, we interpret 
$C^m(U,V)$ to mean functions possessing an 
$m$-times continuously differentiable extension
to an open set or the closure of an open set which 
contains $U$.  For instance, when 
$U$ is a Lyapunov patch on $\Gamma$, we 
may consider extensions that are constant 
in the normal direction.  We say that $\Gamma$ 
is of class $C^m$, and use the notation 
$\Gamma\in C^m$, if 
$\psi_{x_0}\in C^m(\Omega_{x_0,d},\Reals^3)$ 
and 
$\psi_{x_0}^{-1}\in C^m(\Gamma_{x_0,d},\Reals^2)$ 
for every $x_0\in\Gamma$.  Similarly, we say that 
$\Gamma$ is of class $C^{m,1}$, and use the 
notation $\Gamma\in C^{m,1}$, if 
$\psi_{x_0}\in C^{m,1}(\Omega_{x_0,d},\Reals^3)$ 
and
$\psi_{x_0}^{-1}\in C^{m,1}(\Gamma_{x_0,d},\Reals^2)$ 
for every $x_0\in\Gamma$, and additionally the Lipschitz
constant for each derivative is uniform in $x_0$.

To any $f\in C^{1}(\Gamma,\Rk)$ we associate a surface
derivative $Df\in C^{0}(\Gamma,\Reals^{k\times 3})$.  
Specifically, for any Lyapunov patch $\Gamma_{x_0,d}$ 
and any point $y=\psi_{x_0}(\xi)\in \Gamma_{x_0,d}$, 
we define $Df(y)$ by
\begin{alignat}{2}
Df(y)t 
 &= \frac{\partial(f\circ\psi_{x_0})}{\partial\xi}\mu,
 &\quad &\forall t = \frac{\partial\psi_{x_0}}
                         {\partial\xi}\mu\in T_y\Gamma,
  \quad \mu\in\Reals^2, \\
Df(y)\nu 
 &= 0,
 &\quad &\forall \nu\in [T_y\Gamma]^\perp.
\end{alignat}
Since any vector $v$ can be decomposed as $v=v_t+v_\nu$, 
where $v_t\in\ T_y\Gamma$ and $v_\nu\in [T_y\Gamma]^\perp$, 
we have $Df(y)v = Df(y)v_t$.  Thus $Df(y)\in \Reals^{k\times 3}$ 
is defined for all vectors $v\in\Rt$.  For arbitrary 
$f\in C^{1}(\Gamma,\Rk)$, the matrix $Df(y)$ is equivalent
to the usual derivative matrix of an extension of $f$, where 
the extension is constant in the normal direction to $\Gamma$.  
When $f\in C^{1}(\Gamma,\Rk)$ is the restriction of some 
given $g\in C^{1}(\Rt,\Rk)$, the matrix $Df(y)$ is equivalent 
to the composition of the usual derivative matrix of $g$ with 
the projection matrix for orthogonal projection onto 
$T_y\Gamma$.  In the case when $g$ is constant in the
normal direction, the projection matrix can be replaced 
by the identity matrix and the two descriptions coincide. 
Notice that, for any curve $y(\tau)=\psi_{x_0}(\xi(\tau))$ 
in any Lyapunov patch $\Gamma_{x_0,d}$, we have
\begin{equation}
\frac{d}{d\tau} f(y(\tau)) 
  = Df(y(\tau)) \frac{d}{d\tau}y(\tau).
\end{equation}

Given a function of the form $f(x,y)$ with $x,y\in\Gamma$,
we use the notation $f_x(y)$ and $f_y(x)$ to denote 
the functions obtained by fixing the values of $x$
and $y$, respectively.  Similarly, we use the notation 
$D_x f(x,y)$ and $D_y f(x,y)$ to denote the surface 
derivatives of $f(x,y)$ with respect to $x$ and $y$, 
respectively, where the other variable is held fixed.
Hence $D_x f(x,y) = D f_y(x)$ and $D_y f(x,y) = D f_x(y)$.   
Higher-order surface derivatives for a function $f(y)$ are 
defined in a natural way.  For instance, if we identify
$\Reals^{k\times 3}$ with $\Reals^{3k}$ and
$Df\in C^{1}(\Gamma,\Reals^{3k})$, then 
$D^2 f= D(Df)\in C^{0}(\Gamma,\Reals^{3k\times 3})$
is defined exactly as above.  We denote the diagonal 
subset of $\Gamma\times\Gamma$ by 
$\Upsilon=\{(x,y)\in\Gamma\times\Gamma\;|\;y=x\}$.
Moreover, for any $\delta>0$ we also define an open 
$\delta$-neighborhood of the diagonal by
$\Upsilon_\delta=\{(x,y)\in\Gamma\times\Gamma\;|\;
|y-x|<\delta\}$.  Throughout our developments, we
use $|\cdot|$ to denote a Euclidean norm or the 
measure of a surface, as determined by the context.

\subsection{Problem statement}

Given a Lyapunov surface $\Gamma$ in 
three-dimensional space $\Rt$, we consider 
the problem of finding a function 
$\varphi\in C^0(\Gamma,\Rk)$ that satisfies
\begin{equation}
c \varphi - \esA\varphi = f,
\label{IntEqn}
\end{equation}
where $c \neq 0$ is a given constant,
$f \in C^0(\Gamma,\Rk)$ is a given function,
and $\esA:C^0(\Gamma,\Rk) \to C^0(\Gamma,\Rk)$
is a given integral operator of the form
\begin{equation}
\esA\varphi = \esG\varphi + \esH\varphi,
\label{DefAop}
\end{equation}
where
\begin{equation}
(\esG\varphi)(x)
= \int_\Gamma G(x,y) \varphi(y) dA_y,
\quad
(\esH\varphi)(x)
=  \int_\Gamma H(x,y) \varphi(y) dA_y.
\label{DefGHops}
\end{equation}
In the above, $dA_y$ denotes an infinitesimal area 
element at $y$, $G\in C^0(\Gamma\times\Gamma,\Rkk)$
is a continuous kernel, and
$H\in C^0(\Gamma\times\Gamma\backslash\Upsilon,\Rkk)$
is a weakly singular kernel, which is unbounded along
the diagonal subset $\Upsilon$.  The above equation 
is typically considered with $k=1$ or $3$.  The 
first case corresponds to a scalar-valued problem 
in three-dimensional space, as would arise in 
applications to electrostatics, and the second 
case corresponds to a vector-valued problem in
three-dimensional space, as would arise in 
applications to elastostatics and hydrodynamics.

Our study of (\ref{IntEqn}) will rely on various 
assumptions about the surface $\Gamma$ and the 
kernels $G$ and $H$.  Specifically, we assume:
\begin{itemize}
\item[(A0)] $\Gamma\in C^{m+1,1}$ for some $m\ge0$, 
with Lyapunov radius $d>0$. 
\item[(A1)] $G\in C^{m,1}(\Gamma\times\Gamma,\Rkk)$.
\item[(A2)] $H\in C^{m,1}
(\Gamma\times\Gamma\backslash\Upsilon_\delta,\Rkk)$ for
every $0<\delta\le d$. Moreover, $H$ can be decomposed 
as
\begin{equation}
H(x,y)=u(x,y)/|x-y|^{2-\mu}, 
\label{Hker}
\end{equation}
for some function 
$u\in C^0(\Gamma\times\Gamma\backslash\Upsilon,\Rkk)$ 
and exponent $0 < \mu \le 1$.  Furthermore, on
$\Gamma\times\Gamma\backslash\Upsilon$ the 
component functions $u_{ij}$ ($1\le i,j\le k$) satisfy, 
for some positive constant $C$,
\begin{alignat}{2}
&|u_{ij}(x,y)| \le C, 
  &\quad & \label{HkerUbnd}\\ 
&|D_x^s u_{ij}(x,y)| \le C|y-x|^{-s}, 
  &\quad &1\le s\le m+1, \label{HkerUx} \\
&|D_y^s u_{ij}(x,y)| \le C|y-x|^{-s}, 
  &\quad  &1\le s\le m.  \label{HkerUy}
\end{alignat}
\item[(A3)] When $m\ge 1$ we additionally assume
that $\mu=1$ and that $u$ has the properties
\begin{alignat*}{3}
&{\rm (i)} &\quad 
&u_{x_0}^{\rm polar} 
\in C^{0,1}(\Omega_{x_0,d}^{\rm polar},\Rkk),
  &\quad &\forall x_0\in\Gamma, \\ 
&{\rm (ii)} &\quad 
&u_{x_0}^{\rm polar}(0,-\hxi) 
= u_{x_0}^{\rm polar}(0,\hxi),
  &\quad &\forall \hxi\in S, \\
&{\rm (iii)} &\quad 
&u_{x_0}^{\Delta} \in C^{0,1}(\Gamma_{x_0,d},\Rkk),
  &\quad  &\forall x_0\in\Gamma,  
\end{alignat*}
where Lipschitz constants are uniform in $x_0$ and 
$\smash{u_{x_0}^{\rm polar}} = 
\smash{u_{x_0}\circ\psi_{x_0}^{\rm polar}}$ 
and
\begin{equation}
u_{x_0}^{\Delta}(y) = 
\begin{cases}
[u_{x_0}^{\rm polar}(\rho,\hxi)-u_{x_0}^{\rm polar}(0,\hxi)]
\lvert_{(\rho,\hxi)=[\psi_{x_0}^{\rm polar}]^{-1}(y)}, 
&y\ne x_0, \\
0,
&y=x_0. \\
\end{cases}
\end{equation}
\end{itemize}

Assumption (A0) states that $\Gamma$ must be at
least of class $C^{1,1}$, which implies that
$\Gamma$ is differentiable, with a curvature 
that is defined almost everywhere and bounded.  
Assumption (A1) states that the regularity of $G$
may be one order less than that of $\Gamma$, which
will be convenient for our analysis.  Assumption 
(A2) is essentially the definition of a 
weakly singular kernel.  Typically, the parameter 
$\mu$ is identified with the Lyapunov exponent 
$\lambda$, but it is not necessary to do so.  Away 
from the diagonal, the regularity of $H$ is assumed 
to be the same as that of $G$.  Assumption (A3) 
outlines additional regularity conditions on $H$ in 
the case when $\Gamma$ is at least of class $C^{2,1}$.  
The most specific condition is (A3)(ii), which states
that the function $\smash{u_{x_0}^{\rm polar}}$ is an 
even function on $S$ at $\rho=0$.  As we will see, this 
implies that certain local moments of $H$ will vanish.  
This condition will be important in our analysis of 
numerical methods and is related to the classic 
Tricomi condition that arises in the study of singular 
integral operators \cite{Tricomi:1928}.  This condition 
is satisfied by weakly singular kernels $H$ arising
in different applications, for example, the
classic double-layer kernel in three-dimensional 
electrostatics, and incompressible elastostatics and 
hydrodynamics.  We remark that condition (A3) is 
required for only the lowest-order method studied
herein corresponding to a local polynomial correction
of degree zero; it is not required for the higher-order 
methods.

\subsection{Solvability theorem}

The following result establishes the solvability of the
boundary integral equation in (\ref{IntEqn}).  Its proof 
follows from the classic Fredholm Theorems for compact 
operators \cite{Kress:1989,Mikhlin:1960} and is omitted 
for brevity.  The compactness of $\esA=\esG+\esH$ will 
be established in Lemma \ref{LemmaFour}.

\begin{theorem} \label{IntEqnThm}
Under conditions {\rm (A0)}--{\rm (A2)} the operator 
$\esA$ on $C^0(\Gamma,\Rk)$ is compact.  Hence, provided 
$c$ is not an associated eigenvalue, there exists a unique 
solution $\varphi\in C^0(\Gamma,\Rk)$ for any closed, 
bounded Lyapunov surface $\Gamma\in C^{1,1}$ and
boundary data $f\in C^0(\Gamma,\Rk)$.
\end{theorem}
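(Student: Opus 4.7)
The strategy is to reduce the statement to the classical Fredholm alternative on the Banach space $C^0(\Gamma,\Rk)$ equipped with the supremum norm. Since Lemma \ref{LemmaFour} (forthcoming) establishes compactness of $\esA=\esG+\esH$, the plan is to outline the main ingredients needed for that compactness argument, and then show how Fredholm theory yields existence and uniqueness under the eigenvalue hypothesis.

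The first ingredient is compactness of $\esG$. Since $\Gamma$ is a closed bounded Lyapunov surface, $\Gamma\times\Gamma$ is compact, and by (A1) the kernel $G$ is continuous (indeed $C^{m,1}$) on this compact set. Standard Arzel\`a--Ascoli reasoning, together with uniform continuity of $G$ and boundedness of $|\Gamma|$, shows that $\esG$ maps bounded sets in $C^0(\Gamma,\Rk)$ to equicontinuous, uniformly bounded sets, hence $\esG$ is compact. The second ingredient, compactness of $\esH$, is the main obstacle, because $H$ is unbounded on $\Upsilon$. The natural route is approximation: define truncated kernels $H_\delta(x,y)=H(x,y)\chi_\delta(|x-y|)$ where $\chi_\delta$ vanishes on $[0,\delta/2]$ and equals one on $[\delta,\infty)$, so that each $H_\delta$ is continuous on $\Gamma\times\Gamma$ and each $\esH_\delta$ is compact by the previous argument. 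One then estimates
\begin{equation*}
\|(\esH-\esH_\delta)\varphi\|_\infty
 \le \|\varphi\|_\infty \sup_{x\in\Gamma}\int_{\Gamma\cap B(x,\delta)}
      |H(x,y)|\,dA_y.
\end{equation*}
Using the decomposition $H(x,y)=u(x,y)/|x-y|^{2-\mu}$ from (A2) with $\mu>0$, the bound (\ref{HkerUbnd}) on $u$, and passing to the local Cartesian parameterization (\ref{LocCarCoords}) on the Lyapunov patch at $x$, the surface area element is comparable to $d\xi$, so the inner integral is dominated by $C\int_{|\xi|<\delta}|\xi|^{\mu-2}\,d\xi\le C'\delta^\mu\to 0$. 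Thus $\esH_\delta\to\esH$ in operator norm, and compactness of $\esH$ follows because the limit of a norm-convergent sequence of compact operators on a Banach space is compact.

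With compactness of $\esA$ in hand, the Riesz--Schauder / classical Fredholm theory applies to $cI-\esA$ on the Banach space $C^0(\Gamma,\Rk)$. Since $c\ne 0$, the operator $cI-\esA$ is a Fredholm operator of index zero, so injectivity is equivalent to surjectivity. The hypothesis that $c$ is not an eigenvalue of $\esA$ is exactly the statement that $\ker(cI-\esA)=\{0\}$, which then yields bijectivity of $cI-\esA$ and, by the open mapping theorem, boundedness of the inverse. For any data $f\in C^0(\Gamma,\Rk)$ one therefore obtains a unique $\varphi\in C^0(\Gamma,\Rk)$ solving $c\varphi-\esA\varphi=f$, completing the proof. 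The hardest technical step is controlling the local integral of $H$ near the diagonal uniformly in $x$, which is where the Lyapunov structure and the exponent $\mu>0$ in (A2) are essential; everything else is standard functional analysis.
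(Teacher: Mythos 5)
Your argument is correct, and it reaches the conclusion by a genuinely different route than the paper takes for the compactness of $\esH$. The paper deduces Theorem~\ref{IntEqnThm} from the Fredholm alternative plus the compactness statement in Lemma~\ref{LemmaFour}(i), and Lemma~\ref{LemmaFour}(i) is in turn proved by citing the two estimates in Lemma~\ref{LemmaOne}: part~(i), the uniform bound $\int_\Gamma|H(x,y)|\,dA_y\le C$, gives equiboundedness of $\{\esH v:\|v\|\le 1\}$, and part~(ii), the uniform $L^1$-modulus of continuity $\int_\Gamma|H(x_*,y)-H(x_0,y)|\,dA_y\le\veps$ for $|x_*-x_0|\le\delta$, gives equicontinuity, so compactness of $\esH$ follows directly from Arzel\`a--Ascoli. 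You instead truncate the kernel near the diagonal, observe that each $\esH_\delta$ has a continuous kernel and is therefore compact by the same argument as for $\esG$, and then show $\|\esH-\esH_\delta\|\to 0$ in operator norm via the local polar-coordinate estimate $\int_{|\xi|<\delta}|\xi|^{\mu-2}\,d\xi\le C\delta^\mu$ in a Lyapunov patch, invoking the fact that a norm limit of compact operators is compact. Your route avoids needing the harder $x$-equicontinuity estimate of Lemma~\ref{LemmaOne}(ii) and its ingredient Lemma~\ref{LemmaOne}(iii) for this theorem; the trade-off is that the paper needs Lemma~\ref{LemmaOne}(iii) anyway to prove equicontinuity of the \emph{discrete} operators in Lemma~\ref{LemmaTwo}(iii), so the direct Arzel\`a--Ascoli route reuses that machinery, whereas your truncation argument does not extend to the discrete sums and would leave that later work untouched. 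The Fredholm/Riesz--Schauder conclusion (index zero, injective iff bijective, bounded inverse via open mapping) matches the paper's appeal to the classic Fredholm theorems. One small implicit assumption you should state: the estimate on $\int_{\Gamma\cap B(x,\delta)}|H(x,y)|\,dA_y$ requires $\delta\le d$ so that $\Gamma\cap B(x,\delta)$ lies inside the Lyapunov patch where the local Cartesian parameterization and the comparability $|x-y|\asymp|\xi|$ are available.
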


Thus, under mild conditions, the boundary integral
equation in (\ref{IntEqn}) has a unique solution
$\varphi$ for any data $f$.  Moreover, because
the operator $cI-\esA$ can be shown to have a bounded 
inverse, it follows that $\varphi$ depends continuously 
on $f$.  The regularity of the solution $\varphi$ is 
determined by that of the data $f$ and properties of 
the operator $\esA$.  When $\esA$ is a smoothing operator, 
so that $\esA\varphi$ is smoother than $\varphi$, it follows
from (\ref{IntEqn}) that $\varphi$ is in the same regularity 
class as $f$.  The form of the operator $\esA$ arises in 
various applications as mentioned above.  The component
$\esH$ may be interpreted as a double-layer potential.
The component $\esG$ may be zero, or may be interpreted
as a range completion term necessary for the uniqueness
of solutions.  In the latter case, $\esG$ may represent
a potential due to a discrete or continuous distribution
of sources located away from the surface.  Such operators
can be motivated beginning from a generalized boundary
integral representation of a field by a linear combination
of single- and double-layer potentials, and then moving 
the single-layer potential to an offset surface, and
then possibly also shrinking the offset surface to a 
curve or point, or collection thereof.  Various forms 
of such operators have been previously considered 
\cite{Gonzalez:2009,Gunter:1967,Hebeker:1985,Kim:1991,
Ladyzhenskaya:1963,Mikhlin:1960,Odqvist:1930,Power:1987}.

\section{Nystr{\"o}m approximation}
\label{NysAppSect}

\subsection{Mesh, quadrature rule}

We consider an arbitrary decomposition of $\Gamma$
into non-overlapping quadrature elements $\Gamma^e$,
$e=1,\ldots,E$, each with area $|\Gamma^e|>0$.
To any such decomposition we associate a size
parameter $h = \max_{e}(\diam(\Gamma^e))>0$.
For simplicity, we assume that the elements
are either all quadrilateral or all triangular.
In our analysis, we consider sequences of 
decompositions with increasing $E$, or equivalently, 
decreasing $h$.  We will only consider sequences 
that satisfy a uniform refinement condition in 
the sense that the area of all elements is 
reduced at the same rate.  Specifically, we
assume 
\begin{equation*}
Ch^2 \le |\Gamma^e| \le C'h^2,
\quad\forall e=1,\ldots,E, \quad E\ge E_0.
\eqno({\rm A4})
\end{equation*}
Here $C$, $C'$ and $E_0$ are positive constants
whose values may change from one appearance to
the next.

In each element $\Gamma^e$, we introduce quadrature 
nodes $x^e_q$ and weights $W^e_q>0$, $q = 1,\dots,Q$, 
such that
\begin{equation}
\int_\Gamma f(x)\; dA_x =
\sum_{e=1}^{E} \int_{\Gamma^e} f(x) \;dA_x 
      \approx \sum_{e=1}^{E}\sum_{q=1}^{Q} f(x^e_q) W^e_q.
\label{QuadDefn}
\end{equation}
Without loss of generality, we assume that the
quadrature nodes and weights are defined by
mapping each element $\Gamma^e$ to a standard,
planar domain using a local parameterization 
of the same regularity as the surface, and applying 
a local quadrature rule in the standard domain.  In 
this case, the Jacobian of the parameterization would 
be included in the weights $W^e_q$.  We assume that 
the quadrature weights remain bounded and that the 
quadrature points remain distinct and in the element 
interiors.  Specifically, for any sequence of 
decompositions that satisfy the uniform refinement 
condition, we assume
\begin{equation*}
\begin{gathered}
Ch^2 \le {\tstyle\sum_{q=1}^Q} W^e_q \le C'h^2,
\quad\forall 
e=1,\ldots,E, 
\quad 
E\ge E_0, \cr
Ch \le \dist(x^e_q,\partial\Gamma^e)\le h,
\quad\forall 
q=1,\ldots,Q, 
\quad 
e=1,\ldots,E, 
\quad 
E\ge E_0, \cr
Ch \le \min_{(e,q)\ne(e',q')}|x^e_q-x^{e'}_{q'}|\le h,
\quad\forall 
q,q'=1,\ldots,Q, 
\quad 
e,e'=1,\ldots,E, 
\quad 
E\ge E_0.
\end{gathered}
\eqno({\rm A5})
\end{equation*}

To quantify the error in a quadrature rule for a
given function $f$ on a given surface $\Gamma$, we 
introduce the normalized local truncation errors
\begin{equation}
\tau(e,f,h) = \frac{1}{|\Gamma^e|}
\left|\int_{\Gamma^e} f(x) \;dA_x 
      - \sum_{q=1}^{Q} f(x^e_q) W^e_q \right|,
\quad e=1,\ldots,E.
\end{equation}
For sequences of decompositions that satisfy a uniform 
refinement condition, we require that the above 
truncation errors vanish uniformly in $e$ depending 
on properties of $f$.  Specifically, we assume there
exists an integer $\ell\ge 1$ such that
\begin{equation*}
\begin{gathered}
\hbox{\rm $\tau(e,f,h)\to 0$ as $h\to 0$ uniformly
in $e$, for each $f\in C^0(\Gamma,\Reals)$,} \cr
\hbox{\rm $\tau(e,f,h)\le Ch^\ell$ uniformly in
$e$, for each $f\in C^{\ell-1,1}(\Gamma,\Reals)$.} 
\end{gathered}
\eqno({\rm A6})
\end{equation*}
In the above, the constant $C$ may depend on $f$, 
but is independent of $e$ and $h$, and the integer 
$\ell\ge 1$ is called the order of convergence of 
the quadrature rule.  

For convenience, we will often replace the element 
and node indices $e=1,\dots,E$ and $q=1,\ldots,Q$ 
with a single, general index $a=1,\ldots,n$, where 
$n=EQ$.  We will use the multi- and single-index 
notation interchangeably with the understanding 
that there is a bijective map between the two.

\subsection{Partition of unity functions}

To each quadrature node $x_a$ in a decomposition
of $\Gamma$ we associate nodal partition of unity 
functions 
$\zeta_a,\smash{\widehat\zeta_a}\in C^0(\Gamma,\Reals)$.  
We assume that these functions take values in the 
unit interval and are complementary so that their
sum is equal to one.  Moreover, we assume that $\zeta_a$ 
vanishes at least quadratically in a neighborhood of
$x_a$, and that the support of $\smash{\widehat\zeta_a}$ 
is bounded from above by a multiple of the mesh
parameter $h$.  Specifically, we assume
\begin{equation*}
\begin{gathered}
0\le \zeta_a(x), \widehat\zeta_a(x)\le 1,
\quad
\zeta_a(x)+\widehat\zeta_a(x)=1,
\quad\forall x\in\Gamma, \\
|\zeta_a(x)| \le \frac{C|x-x_a|^2}{h^2},
\quad
\diam(\supp(\widehat\zeta_a)) \le Ch,
\quad\forall 
a=1,\ldots,n, 
\quad 
n\ge n_0. 
\end{gathered}
\eqno({\rm A7})
\end{equation*}
The functions $\zeta_a$ and $\smash{\widehat\zeta_a}$ will
play an important role in the family of numerical 
methods that we introduce and in the associated 
convergence proof.  Specifically, these functions 
will help isolate the weak singularity of the 
kernel $H$ at each quadrature point and control 
the numerical error there.  

\subsection{Discretization of integral equation}

Let a decomposition, quadrature rule, and nodal 
partition of unity functions for $\Gamma$ be given.
For any function $f$, let $\esG_h$ and $\esH_h$ 
denote approximations to $\esG$ and $\esH$ of 
the form
\begin{equation}
(\esG_h f)(x)
  = \sum_{b=1}^n G_b(x) f(x_b),
\quad
(\esH_h f)(x)
  = \sum_{b=1}^n H_b(x) f(x_b),
\label{DefOpApprox}
\end{equation}
where $G_b$ and $H_b$ are functions to be specified.  
Moreover, in view of (\ref{IntEqn}), let $\varphi_h$ 
denote an approximation to $\varphi$ defined by
\begin{equation}
c\varphi_h - \esA_h\varphi_h = f
\quad \hbox{\rm where}\quad
\esA_h = \esG_h + \esH_h.
\label{IntEqnApprox}
\end{equation}

The equation for $\varphi_h$ can be reduced to
an $n\times n$ linear system for the nodal
values $\varphi_h(x_a)$.  Indeed, from
(\ref{DefOpApprox}) and (\ref{IntEqnApprox})
we get, for each $a=1,\ldots,n$, 
\begin{equation}
c\varphi_h(x_a) 
 - \sum_{b=1}^n G_b(x_a) \varphi_h(x_b) 
   - \sum_{b=1}^n H_b(x_a) \varphi_h(x_b) 
     = f(x_a).
\label{IntEqnApproxNodal}
\end{equation}
Notice that, for every solution of the discrete
system (\ref{IntEqnApproxNodal}), we obtain a
solution of the continuous system
(\ref{IntEqnApprox}), namely
\begin{equation}
\varphi_h(x) 
  = \frac{1}{c}\left[ f(x) 
     + \sum_{b=1}^n G_b(x) \varphi_h(x_b) 
     + \sum_{b=1}^n H_b(x) \varphi_h(x_b) \right].
\label{IntEqnApproxSoln}
\end{equation}
Moreover, the converse is also true; every solution
of the continuous system provides a solution of the
discrete system by restriction to the nodes.

The method is completed by specifying the functions
$G_b(x)$ and $H_b(x)$.  In view of (\ref{DefGHops}), 
(\ref{QuadDefn}) and (\ref{DefOpApprox}), we define 
$G_b(x)=G(x,x_b)W_b$.  However, due to the weakly 
singular nature of the kernel function $H(x,y)$, 
a similar definition cannot be made for $H_b(x)$, 
because $H(x,x_b)$ is not continuous or even defined 
when $x=x_b$.  Instead, for any integer $p\ge 0$ we 
define 
\begin{equation}
H_b(x) 
   = \zeta_b(x)H(x,x_b)W_b
                  + \widehat\zeta_b(x)R_x(x_b),
\label{LocCorrTerm}
\end{equation}
where $R_x(x_b)$ is a local polynomial of degree $p$ 
at $x$, evaluated at quadrature point $x_b$.  By a 
local polynomial at $x$ we mean a polynomial in any 
system of rectangular coordinates in the tangent plane 
with origin at $x$; see (\ref{LocPolyDefn}) below.  
For a Lyapunov surface $\Gamma$, such polynomials 
are well-defined in a neighborhood of each point 
$x$, and there is a uniform bound on the size of 
this neighborhood; indeed, they are well-defined 
in the Lyapunov patch $\Gamma_{x,d}$.  The unknown 
coefficients in $R_x$ are defined by enforcing the 
local moment conditions
\begin{equation}
\hbox{$(\esH_h \, \eta_x f_x)(x) 
          = (\esH \, \eta_x f_x)(x)$, 
for all local polynomials $f_x$ up to degree $p$}.
\label{LocMomCond}
\end{equation}
Here $\eta_x\in C^{m,1}(\Gamma,[0,1])$ is any given cutoff 
function which is identically one in a fixed neighborhood 
of $x$, for example $\Gamma_{x,d/4}$, and which is 
identically zero outside some fixed neighborhood of $x$, 
for example $\Gamma_{x,d/2}$.  The local polynomials 
$R_x$ and $f_x$ can be described as floating since they 
are defined in a tangent plane that depends on $x$.

The basic structural form of $H_b$ is similar to
$G_b$, but with a correction in a neighborhood
of $x_b$.  By construction, for all $x$ outside 
a neighborhood of $x_b$, we have $\zeta_b=1$ and 
$\smash{\widehat\zeta_b}=0$, so that $H_b$ is defined 
by the quadrature rule applied to $H$.  On the 
other hand, as $x$ approaches $x_b$, we have
$\zeta_b\to 0$ and $\smash{\widehat\zeta_b}\to 1$, so 
that $H_b$ is determined by the local polynomial $R_x$.
The value of $R_x$ at a node $x_b$ can be interpreted
as a generalized quadrature weight as would arise in
a product integration method.  There are various freedoms 
in the choice of $\eta_x$ used in the moment conditions.  
For the lowest-order method with $p=0$, the local polynomial 
$R_x$ reduces to a constant polynomial which can be extended 
to the entire surface, and the cut-off function is unnecessary 
and can be taken as unity.  However, for higher-order methods 
with $p\ge 1$, the local polynomial $R_x$ in general cannot 
be extended to the entire surface, and a non-trivial cut-off 
function as described above is necessary.  Moreover, a 
different cut-off function could be used for each different 
coefficient in $R_x$.  For convenience, we employ a single 
cut-off function $\eta_x$ for all the moment conditions in 
our analysis below.  

The moment conditions in (\ref{LocMomCond}) lead to a linear 
system of equations for the unknown coefficients of the
local polynomial $R_x$, which can be solved for any given 
point $x$.  Specifically, for any given $x\in\Gamma$, the 
local polynomial $R_x:\Gamma_{x,d}\to\Rkk$ of degree
$p\ge 0$ has the form
\begin{equation}
\begin{split}
R_x(z) 
= C_{x,0} 
  \;+\;&C_{x,\alpha_1} \xi_{x,\alpha_1}(z)
  \;+\; C_{x,\alpha_1\alpha_2} 
                    \xi_{x,\alpha_1}(z)\xi_{x,\alpha_2}(z) \\
  &+ \cdots +\;
  C_{x,\alpha_1\alpha_2\cdots\alpha_p} 
     \xi_{x,\alpha_1}(z)\xi_{x,\alpha_2}(z)\cdots
                                        \xi_{x,\alpha_p}(z), \\
\end{split}
\label{LocPolyDefn}
\end{equation}
where the usual summation convention on pairs of repeated 
indices $\alpha_1,\ldots,\alpha_p$ is implied.  Substituting 
(\ref{LocPolyDefn}) into
(\ref{LocCorrTerm}), and using the standard basis for the 
local polynomials $f_x$, we find that the moment conditions 
in (\ref{LocMomCond}) lead to a linear system for the 
$k\times k$ coefficients $C_{x,0}$ and $C_{x,\alpha_1}$ 
through $C_{x,\alpha_1\alpha_2\cdots\alpha_p}$, namely
\begin{equation}
\begin{gathered}
M_{x,0}^{0}C_{x,0} 
+ \sum_{s=1}^{p} 
  M_{x,\alpha_1\cdots\alpha_s}^{0}
  C_{x,\alpha_1\cdots\alpha_s}
= \Delta_{x}^{0}, \\
M_{x,0}^{\beta_1}C_{x,0} 
+ \sum_{s=1}^{p} 
  M_{x,\alpha_1\cdots\alpha_s}^{\beta_1}
  C_{x,\alpha_1\cdots\alpha_s}
= \Delta_{x}^{\beta_1}, \\
\vdots \\
M_{x,0}^{\beta_1\cdots\beta_p}C_{x,0} 
+ \sum_{s=1}^{p} 
  M_{x,\alpha_1\cdots\alpha_s}^{\beta_1\cdots\beta_p}
  C_{x,\alpha_1\cdots\alpha_s}
= \Delta_{x}^{\beta_1\cdots\beta_p}. \\
\end{gathered}
\label{LocMomCondMat}
\end{equation}
When $p=0$, the indicated sums and all equations 
except the first are empty and the system reduces 
to the single equation 
$\smash{M_{x,0}^{0}C_{x,0}}=\smash{\Delta_{x}^{0}}$,
where
\begin{equation}
\begin{gathered}
M_{x,0}^{0}
= \sum_{b=1}^{n} \widehat\zeta_b(x) \eta_x(x_b)
                                           \in\Reals, \\
\Delta_{x}^{0} 
= \int_{\Gamma} H(x,y)\eta_x(y)\;dA_y
   - \sum_{b=1}^{n} \zeta_b(x)H(x,x_b)\eta_x(x_b)W_b
                                             \in\Rkk. \\
\end{gathered}
\label{LocMomCondCoeffOne}
\end{equation}
When $p\ge 1$, all sums and equations are non-empty 
and the system has the full form indicated in 
(\ref{LocMomCondMat}).  In addition to 
$\smash{M_{x,0}^{0}}$ and $\smash{\Delta_{x}^{0}}$,
we have, for $1\le s,t\le p$, 
\begin{equation}
\begin{gathered}
M_{x,\alpha_1\cdots\alpha_s}^{0}
= \sum_{b=1}^{n} \widehat\zeta_b(x) \eta_x(x_b)
    \xi_{x,\alpha_1}(x_b)\cdots\xi_{x,\alpha_s}(x_b)
                                         \in\Reals, \\
M_{x,0}^{\beta_1\cdots\beta_t}
= \sum_{b=1}^{n} \widehat\zeta_b(x) \eta_x(x_b)
    \xi_{x,\beta_1}(x_b)\cdots\xi_{x,\beta_t}(x_b)
                                         \in\Reals, \\
M_{x,\alpha_1\cdots\alpha_s}^{\beta_1\cdots\beta_t}
= \sum_{b=1}^{n} \widehat\zeta_b(x) \eta_x(x_b)
    \xi_{x,\alpha_1}(x_b)\cdots\xi_{x,\alpha_s}(x_b)
    \xi_{x,\beta_1}(x_b)\cdots\xi_{x,\beta_t}(x_b)
                                         \in\Reals, \\
\end{gathered}
\label{LocMomCondCoeffTwo}
\end{equation}
and
\begin{equation}
\begin{gathered}
\Delta_{x}^{\beta_1\cdots\beta_t} 
= \int_{\Gamma} H(x,y)\eta_x(y)
    \xi_{x,\beta_1}(y)\cdots\xi_{x,\beta_t}(y)\;dA_y 
\hbox to 1.2in{\hfill} \\
\hbox to 0.6in{\hfill} 
   - \sum_{b=1}^{n} \zeta_b(x)H(x,x_b)\eta_x(x_b)
      \xi_{x,\beta_1}(x_b)\cdots\xi_{x,\beta_t}(x_b) W_b
                                              \in\Rkk. \\
\end{gathered}
\label{LocMomCondCoeffThree}
\end{equation}

In contrast to what the number of indices would suggest,
the number of independent $k\times k$ equations in 
(\ref{LocMomCondMat}) grows only quadratically in $p$.  
This follows from the fact that all quantities in the 
equations are fully symmetric in the indices 
$\alpha_1\cdots\alpha_s$ and $\beta_1\cdots\beta_t$ 
for all $1\le s,t\le p$.  Accounting for symmetry, the 
number of independent components of 
$C_{x,\alpha_1 \cdots \alpha_s}$ is not $2^s$, but only 
$(s+1)$.  Summing over $s=1,\ldots,p$, and including the 
single contribution from $C_{x,0}$, we find that the number 
of independent $k\times k$ unknowns, or equivalently, 
independent $k\times k$ equations, is equal 
to \hbox{$(p+1)(p+2)/2$}.  Notice that the implementation 
of the system in (\ref{LocMomCondMat}) requires various 
integral moments of the weakly singular kernel $H$.
These moments can be evaluated numerically using 
techniques such as Duffy \cite{Duffy:1982} or local 
polar coordinate \cite{Bruno:2001,Kunyansky:submitted,
Ying:2006} transformations.  
In the case when $p=0$ and the cut-off function 
is taken as unity, the required moment is known 
analytically in many applications.  However, in 
the case when $p\ge 1$ and the cut-off function is 
non-trivial, the required moments must generally
be obtained numerically.

For any given point $x\in\Gamma$, the solvability 
of the linear system in (\ref{LocMomCondMat}) for the
coefficients of $R_x$ depends on the support of the 
nodal functions $\smash{\widehat\zeta_b}$.  Specifically, 
let $J_x=\{b\;|\;\smash{\widehat\zeta_b(x)}>0\}$ and 
consider sufficiently refined surface decompositions 
such that $\eta_x(x_b)=1$ for all $b\in J_x$.  For 
any local scalar-valued polynomial $f_x$, let $F_x$ 
denote the vector of size \hbox{$(p+1)(p+2)/2$} of 
independent coefficients, where each is weighted according 
to its multiplicity.  Moreover, let $M_x$ denote 
the corresponding square, symmetric coefficient matrix of 
size \hbox{$(p+1)(p+2)/2$} associated with the independent 
equations in (\ref{LocMomCondMat}).  Then by direct 
computation we find
\begin{equation}
F_x\cdot M_x F_x 
= \sum_{b=1}^{n} \widehat\zeta_b(x)[f_x(x_b)]^2\eta_x(x_b)
= \sum_{b\in J_x} \widehat\zeta_b(x)[f_x(x_b)]^2 \ge 0.
\end{equation}
From this we can deduce sufficient conditions for the 
positive-definiteness of $M_x$ and hence the unique 
solvability of (\ref{LocMomCondMat}).  Specifically,
for each $x\in\Gamma$, it is sufficient that $J_x$ be 
non-empty, and that the only polynomial of degree $p$ 
which satisfies $f_x(x_b)=0$ for all $b\in J_x$ be the
zero polynomial.  In view of the number of polynomial
coefficients, this condition implies that $J_x$ must 
contain at least $(p+1)(p+2)/2$ quadrature nodes 
for each $x\in\Gamma$.  When $x$ is itself a 
quadrature node, this implies that the support of 
each function $\smash{\widehat\zeta_b}$ must contain 
at least as many nodes.  Hence the support of the
functions $\smash{\widehat\zeta_b}$ determines the 
solvability of (\ref{LocMomCondMat}).  In our analysis, 
we will assume that $M_x$ has a uniformly bounded 
inverse, namely
\begin{equation*}
\begin{gathered}
|M_x^{-1}|\le C
\quad 
\forall x\in\Gamma, 
\quad
n\ge n_0. 
\end{gathered}
\eqno({\rm A8})
\end{equation*}
A straightforward choice of functions 
$\smash{\widehat\zeta_b}$ which satisfy conditions 
(A7) and (A8) in the case $p=0$ is described below; 
see also \cite{Li:submitted}.

The implementation of the numerical method is 
centered upon (\ref{IntEqnApproxNodal}), 
(\ref{IntEqnApproxSoln}) and (\ref{LocMomCondMat}).
For each quadrature node $x_a$, the linear
system (\ref{LocMomCondMat}) is solved 
to obtain the local polynomials $R_{x_a}$,
which are then used in the linear system 
(\ref{IntEqnApproxNodal}) to obtain the nodal 
values $\varphi_h(x_a)$.  Once these nodal 
values are determined, they can be extended to 
a continuous function.  Specifically, given any 
point $x$, the linear system in (\ref{LocMomCondMat}) 
can be solved to obtain the local polynomial 
$R_{x}$, which can then be used in the 
interpolation equation (\ref{IntEqnApproxSoln}) 
to obtain the value of $\varphi_h(x)$.  
Notice that, in general, the construction of 
the local polynomials $R_{x_a}$ and $R_{x}$
requires the evaluation of local moments of
the weakly singular kernel $H$, and the
evaluation of local Cartesian coordinates
$\xi_{x_a}$ and $\xi_{x}$.  As will be shown 
later, the values of $R_{x_a}$ and $R_{x}$
are independent of the choice of orthonormal 
basis associated with $\xi_{x_a}$ and $\xi_{x}$.

\subsection{Illustrative example}

Here we illustrate the form of the nodal equations
(\ref{IntEqnApproxNodal}) in the lowest-order case
with $p=0$.  In this case, the equations take a 
particularly simple form and the evaluation of local 
Cartesian coordinates is not necessary, and the 
evaluation of weakly singular integrals is typically 
not necessary.

We begin by describing partition of unity functions
$\zeta_a$ and $\smash{\widehat\zeta_a}$ which satisfy 
conditions (A7) and (A8).  Consider an auxiliary 
decomposition of $\Gamma$ into Voronoi cells, where 
each cell contains a single quadrature point $x_a$.  
Each cell can be mapped to a unit circle, with $x_a$ 
mapped to the center.  A simple quadratic function 
$z = (x^2 + y^2)/2$ in the unit circle can be mapped 
back to the cell as the central part of $\zeta_a$.  
We can then introduce an offset boundary which is 
displaced outward from the cell boundary by a distance 
of $\veps/2$, where $\veps = \min_{a \neq b}|x_a - x_b|$.  
The mapped quadratic function, which by design has the 
value $1/2$ on the cell boundary, can be extended to 
achieve a value of $1$ on the offset boundary, and then 
further extended to the rest of $\Gamma$ with the 
constant value $1$.  Notice that the functions 
$\zeta_a$ and $\smash{\widehat\zeta_a}$ so constructed 
have the convenient nodal property that
$\zeta_a(x_b)=1-\delta_{ab}$ and
$\smash{\widehat\zeta_a}(x_b)=\delta_{ab}$.  Moreover, 
the supports of $\smash{\widehat\zeta_a}$ overlap on 
the entire surface and we have 
$\sum_a \smash{\widehat\zeta_a}(x)\ge 1/2$ for all 
$x\in\Gamma$.

For the method with $p=0$, the local polynomial $R_x$ 
reduces to a constant polynomial $R_x(z)\equiv C_{x,0}$, 
and the cutoff function $\eta_x$ can be taken as unity.  
The linear system in (\ref{LocMomCondMat}) reduces
to the single equation 
$\smash{M_{x,0}^{0}C_{x,0}}=\smash{\Delta_{x}^{0}}$,
which implies
\begin{equation}
C_{x,0} = {\int_\Gamma H(x,y) \; dA_y 
               - \sum_{b=1}^n \zeta_{b}(x)  
                        H(x,x_{b}) W_{b} 
                \over \sum_{b=1}^n \widehat\zeta_{b}(x)}.
\end{equation}
This solution is well-defined and bounded for any sequence 
of decompositions by properties of the nodal functions 
$\zeta_a$ and $\smash{\widehat\zeta_a}$ and the kernel function 
$H$.  In various applications, the weakly singular integral 
in the above expression is known analytically and hence 
numerical evaluation is not necessary.  

The nodal equations in (\ref{IntEqnApproxNodal}) take a
particularly simple form.  Indeed, because our choice 
of the nodal partition of unity functions $\zeta_a$ and 
$\smash{\widehat\zeta_a}$ has the property that 
$\zeta_a(x_b)=1-\delta_{ab}$ and 
$\smash{\widehat\zeta_a}(x_b)=\delta_{ab}$, 
the equations become
\begin{equation}
\begin{gathered}
\gamma(x_a)\varphi_h(x_a)
 - \smash{\sum_{b=1}^n} G(x_a,x_b) \varphi_h(x_b) W_b 
\hskip1.5in\\
\hskip1.5in
  - \sum_{b=1 \atop b\ne a}^n 
     H(x_a,x_b)[\varphi_h(x_b)-\varphi_h(x_a)] W_b
      = f(x_a),
\end{gathered}
\label{IntEqnApproxNodalLow}
\end{equation}
where $\gamma(x_a)=cI-\int_\Gamma H(x_a,y)\,dA_y$.
This discrete system is similar to the classic 
singularity subtraction method discussed by various 
authors \cite{Anselone:1981,Krylov:1958}.  The
factor $[\varphi_h(x_b)-\varphi_h(x_a)]$ can be 
interpreted as cancelling the weak singularity in
$H(x_a,x_b)$.  Indeed, since the sum extends over 
$b\ne a$ only, the product 
$H(x_a,x_b)[\varphi_h(x_b)-\varphi_h(x_a)]$
can be interpreted as vanishing when $b=a$.  For 
methods with $p\ge 1$, a similar but higher-order 
cancellation can be interpreted to occur.  Once 
the nodal values of $\varphi_h$ are determined, 
they can be extended to a continuous function 
using the interpolation equation in 
(\ref{IntEqnApproxSoln}).  Notice that the
nodal values of $\varphi_h$ can be computed 
without explicit knowledge of the partition 
of unity functions.  Various numerical examples 
with this method are given in \cite{Li:submitted}.

\subsection{Solvability and convergence theorem}

The following result establishes the solvability and
convergence of the locally-corrected Nystr{\"o}m method
defined in (\ref{DefOpApprox})--(\ref{LocMomCondCoeffThree}).
We consider the method with a quadrature rule of arbitrary 
order $\ell\ge 1$, a local polynomial correction of arbitrary 
degree $p\ge 0$, and a surface with regularity index $m\ge 0$.
In view of Theorem \ref{IntEqnThm}, we suppose that
the constant $c\ne 0$ is not an associated eigenvalue of
the operator $\esA$, so that the given boundary integral
equation has a unique solution $\varphi$.  Below we use 
$C_\varphi$ to denote a constant depending on $\varphi$.

\medskip
\begin{theorem} \label{ConvThm}
Under conditions {\rm (A0)}--{\rm (A8)}, there exists
a unique approximation $\varphi_h\in C^0(\Gamma,\Rk)$ for 
any closed, bounded Lyapunov surface $\Gamma\in C^{1,1}$
and boundary data $f\in C^0(\Gamma,\Rk)$ for all $h>0$ 
sufficiently small.  Moreover, if 
$\varphi\in C^{m,1}(\Gamma,\Rk)$ and $\Gamma\in C^{m+1,1}$, 
then as $h\to 0$ 
\begin{equation*}
\begin{matrix}
{\rm (i)} \hfill &\quad
||\varphi_h-\varphi|| \to 0, \hfill
      &\quad\forall \ell\ge 1, p\ge 0, m\ge 0, \hfill \\
\\
{\rm (ii)} \hfill &\quad
||\varphi_h-\varphi|| \le C_\varphi h, \hfill
      &\quad\forall \ell\ge 1, p = 0, m\ge 1, \hfill \\
\\
{\rm (iii)} \hfill &\quad
||\varphi_h-\varphi|| \le C_\varphi 
                   h^{\min(\ell,p,m)}, \hfill
      &\quad\forall \ell\ge 1, p\ge 1, m\ge 1. \hfill \\
\end{matrix}
\end{equation*}
\end{theorem}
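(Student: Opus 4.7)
The plan is to fit Theorem \ref{ConvThm} into the classical Anselone framework of collectively compact operator approximations. Once one shows that (a) the family $\{\esA_h\}$ is collectively compact on $C^0(\Gamma,\Rk)$ and (b) $\esA_h \to \esA$ pointwise, the invertibility of $cI - \esA$ from Theorem \ref{IntEqnThm} automatically promotes to uniform invertibility of $cI - \esA_h$ for all sufficiently small $h$, and yields the consistency--stability bound $\|\varphi_h - \varphi\|_\infty \le C\,\|(\esA - \esA_h)\varphi\|_\infty$. All three conclusions (i)--(iii) then follow by estimating the right-hand side at the appropriate regularity level for the exact solution $\varphi$.

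I would split $\esA_h = \esG_h + \esH_h$. The smooth part is classical: under (A1), assumption (A6) gives $\|(\esG - \esG_h)\varphi\|_\infty \le C\,h^{\min(\ell,m+1)}$ when $\varphi \in C^{m,1}$ and pointwise convergence for merely continuous $\varphi$, while uniform continuity of $G$ together with (A5) produces equicontinuity of $\esG_h\varphi$ and hence collective compactness of $\{\esG_h\}$. The crux is therefore the singular part $\esH - \esH_h$. For fixed $x \in \Gamma$, expand $\varphi$ in its Taylor polynomial $T_x^p\varphi$ of degree $p$ in the local Cartesian coordinates of (\ref{LocCarCoords}) and write $\varphi = \eta_x T_x^p\varphi + \eta_x(\varphi - T_x^p\varphi) + (1-\eta_x)\varphi$. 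The moment conditions (\ref{LocMomCond}) are precisely engineered so that $(\esH_h - \esH)(\eta_x T_x^p\varphi)(x) = 0$, so the pointwise error collapses to a far-field piece acting on $(1 - \eta_x)\varphi$, whose integrand is smooth and controlled by (A6), plus a near-field Taylor-remainder piece in which $|\varphi(y) - T_x^p\varphi(y)| \le C_\varphi\,|y-x|^{\min(p+1,m+1)}$.

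The main obstacle is bounding the near-field piece uniformly in $x$. The Taylor remainder pairs against the $|y-x|^{\mu-2}$ singularity of $H$ to give a mildly singular integrand, and one must separately estimate the three discrete terms $\zeta_b(x)H(x,x_b)W_b$, $\widehat\zeta_b(x) R_x(x_b)$, and the exact integral they approximate, using (A8) to control the coefficient vector of $R_x$, (A2) for the moments of $H$ themselves, and the quadratic vanishing of $\zeta_b$ from (A7) to tame the standard quadrature sum near the singularity. A careful accounting yields the rate $h^{\min(\ell,p,m)}$ claimed in (iii). The case $p = 0$, $m \ge 1$ of (ii) is special because a constant correction carries no directional information; there the extra order of convergence must be extracted from assumption (A3)(ii), which forces the leading angular moment of $H$ to vanish and yields an additional factor of $h$ by symmetry. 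Finally, collective compactness of $\{\esH_h\}$ and the low-regularity statement (i) follow from the same decomposition: equicontinuity in $x$ is obtained from (A3)(iii), (A8) and the off-diagonal smoothness (A2), and for $\varphi \in C^0$ pointwise convergence is recovered by a density argument against the already uniform bounds. The technical lemmas promised in Section 4 will presumably provide the uniform-in-$x$ control of $R_x$ and of the near-diagonal curved-patch integrals; this is where I expect the bulk of the real work to reside.
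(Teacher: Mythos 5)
Your proposal is correct and follows essentially the same route as the paper: collectively compact operator theory to reduce to bounding $\|(\esA-\esA_h)\varphi\|_\infty$, a far/near split via a cutoff times a local Taylor polynomial which the moment conditions annihilate, quadrature error (A6) for the far field, and the Tricomi-type cancellation from (A3)(ii) to rescue the $p=0$ case. One small point to fix when writing it out: the Taylor polynomial should be taken of degree $\min(p,m)$ rather than $p$ (as your own remainder exponent $\min(p+1,m+1)$ tacitly requires), since for $m<p$ the higher-degree Taylor expansion of $\varphi$ is not available while the moment conditions of degree $\le m$ are still usable.
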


Thus, under suitable assumptions,  the method defined
by (\ref{DefOpApprox})--(\ref{LocMomCondCoeffThree}) is 
convergent in the usual maximum or $C^0$-norm.  The rate 
of convergence depends on the order $\ell$ of the
quadrature rule, the degree $p$ of the local polynomial
correction, and the the regularity index $m$ of the 
exact solution $\varphi$ and the surface $\Gamma$.
In the minimal regularity case with $m=0$, there is 
no lower bound on the rate, and in the higher regularity 
case with $m\ge 1$, the rate is at least linear.
The rate of convergence is independent of the order 
of the quadrature rule in the case when $p=0$, which 
corresponds to the lowest degree of correction.  Higher 
rates of convergence are obtained when $p\ge 1$, which 
corresponds to higher degrees of correction.  Notice that 
the rates of convergence stated above are lower bounds; 
they could possibly be higher in certain circumstances, 
for example in smooth problems with periodicity, for which 
some quadrature rules are known to have special properties
\cite{Bruno:2001,Kurganov:2009}.  We remark that the
method considered here is based on open quadrature rules 
as required by condition (A5).

As a special case, Theorem \ref{ConvThm} with $p=0$ 
establishes the convergence of a method similar 
to the classic singularity subtraction method 
considered previously by various authors 
\cite{Anselone:1981,Krylov:1958}.  The two methods
lead to apparently identical discrete systems for
the nodal approximations, but differ in how the
nodal approximations are interpolated over
the surface.  Convergence results for this method
appear to be not well-known.  The results derived
here make crucial use of the structure of the nodal
functions $H_b(x)$ defined in (\ref{LocCorrTerm}),
the moment conditions defined in (\ref{LocMomCond}),
and various properties of the nodal partition of unity
functions $\zeta_b$ and $\smash{\widehat\zeta_b}$.  
Such ingredients appear to have not been considered 
in previous studies of the classic method.  We 
remark that the linear convergence result for $p=0$ 
is delicate and relies on a Tricomi-like property 
of the potential $\esH$ implied by condition (A3)(ii).  
The results for $p\ge 1$ are qualitatively different 
and rely mainly on the regularizing effect of the 
local polynomial correction. Specifically, the linear 
convergence result in part (ii) requires condition 
(A3) whereas the convergence results in parts (i) 
and (iii) do not.

\section{Proof}

In this section we provide a proof of Theorem \ref{ConvThm}.
We use the same notation and conventions as in previous
sections.  Specifically, we use $C$, $C'$, $C''$ and so 
on, to denote generic positive constants whose value may 
change from one appearance to the next, use $|\cdot|$ 
to denote a Euclidean norm or the measure of a surface,
as determined by the context, and use $||\cdot||$ to 
denote the usual maximum norm on $C^0(\Gamma,\Rk)$.

\subsection{Collective compactness}
 
We first outline a result, based on the theory of 
collectively compact operators \cite{Anselone:1971}, 
which plays a fundamental role in the analysis of 
Nystr{\"o}m methods \cite{Atkinson:1997,Goldberg:1997,
Kress:1989}. Let $\esA$ be a linear, compact operator 
on $C^0(\Gamma,\Rk)$ as given in (\ref{DefAop}), and 
let $\esA_n$ ($n\ge n_0$) be a sequence of linear, 
finite-rank operators on $C^0(\Gamma,\Rk)$ as given 
in (\ref{IntEqnApprox}), where for convenience we 
consider subscripts $n\to\infty$ in place of 
$h_n\to 0$.  Consider the following conditions:
\begin{itemize}
\item[(C1)] For each $v\in C^0(\Gamma,\Rk)$, 
$(\esA_n v)(x)\to (\esA v)(x)$ uniformly in $x\in\Gamma$.
\item[(C2)] $|(\esA_n v)(x)|\le C$ for all $x\in\Gamma$ 
and $v\in C^0(\Gamma,\Rk)$ with $||v||\le 1$.
\item[(C3)] For every $\veps>0$ there exists $\delta>0$ 
and $N\ge n_0$ such that
$|(\esA_n v)(x)-(\esA_n v)(y)|< \veps$ for all $x,y\in\Gamma$ 
with $|x-y|<\delta$, $v\in C^0(\Gamma,\Rk)$ with 
$||v||\le 1$, and $n\ge N$.
\end{itemize}

Condition (C1) states that $\esA_n$ converges to $\esA$ 
pointwise in $C^0(\Gamma,\Rk)$.  Conditions (C2) and (C3) 
state, respectively, that 
$S=\{\esA_n v\;|\; n\ge n_0, \; ||v||\le 1\}$ is an 
equibounded and equicontinuous subset of $C^0(\Gamma,\Rk)$.  
These two conditions imply, by the Arzela-Ascoli Theorem, 
that $S$ is relatively compact; by definition, the sequence 
$\esA_n$ is then called collectively compact.  For such 
sequences the following well-known result holds
\cite{Atkinson:1997, Goldberg:1997,Kress:1989}.

\begin{theorem}
\label{thmConv}
Let $\esA$ and $\esA_n$ $(n\ge n_0)$ be linear operators 
on $C^0(\Gamma,\Rk)$, where $\esA$ is compact and $\esA_n$ 
are finite-rank, and assume that {\rm (C1)}--{\rm (C3)} 
hold.  If $c\varphi-\esA\varphi=f$ is uniquely solvable for 
$\varphi$, then there exist constants $C_\varphi>0$ and 
$N_\varphi\ge n_0$ such that 
$c \varphi_n - \esA_n\varphi_n = f$ is uniquely 
solvable for $\varphi_n$, and moreover
\begin{equation}
||\varphi_n - \varphi || 
   \le C_\varphi || \esA_n \varphi - \esA\varphi ||, 
\quad\forall n\ge N_\varphi.
\end{equation}
\end{theorem}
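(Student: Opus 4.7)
The plan is to follow the classical Anselone framework for collectively compact operators, which compensates for the fact that one cannot expect $||\esA_n - \esA|| \to 0$ in operator norm under only the pointwise hypothesis (C1); a direct Neumann-series perturbation of $cI - \esA$ is therefore not available.

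The key lemma is the following norm estimate on the \emph{square}: if $\{B_n\}$ is collectively compact on $C^0(\Gamma,\Rk)$ and $B_n v \to 0$ pointwise, then $||B_n^2|| \to 0$. The proof is standard: the set $K = \overline{\bigcup_n B_n(\overline{B})}$ (with $\overline{B}$ the closed unit ball) is compact by collective compactness, the equibounded family $\{B_n\}$ converges to zero uniformly on the compact set $K$, and $B_n^2 v = B_n(B_n v)$ with $B_n v \in K$ for all $v \in \overline{B}$.

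To apply this, set $V_n = (cI - \esA)^{-1}(\esA - \esA_n)$ and use the factorization
\begin{equation*}
cI - \esA_n = (cI - \esA)\bigl[I + V_n\bigr],
\end{equation*}
which reduces invertibility of $cI - \esA_n$ to invertibility of $I + V_n$. Conditions (C1)--(C3) and Arzela-Ascoli imply that $\{\esA_n\}$ is collectively compact, and $\{V_n\}$ inherits collective compactness and pointwise convergence to zero through composition with the bounded operator $(cI - \esA)^{-1}$. The lemma now gives $||V_n^2|| \to 0$. Writing $I - V_n^2 = (I+V_n)(I-V_n)$ and applying the Neumann series to $(I - V_n^2)^{-1}$ then yields, for $n$ large,
\begin{equation*}
(I + V_n)^{-1} = (I - V_n)(I - V_n^2)^{-1},
\end{equation*}
which exists and is uniformly bounded in $n$ because $||V_n||$ is uniformly bounded by (C2) and $||(I - V_n^2)^{-1}|| \le (1 - ||V_n^2||)^{-1} \to 1$. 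Consequently $(cI - \esA_n)^{-1}$ exists and is uniformly bounded for all $n \ge N_\varphi$, establishing unique solvability of the approximate equation.

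The error bound is then routine: subtracting $c\varphi - \esA\varphi = f$ from $c\varphi_n - \esA_n\varphi_n = f$ and rearranging gives $(cI - \esA_n)(\varphi_n - \varphi) = (\esA_n - \esA)\varphi$, whence
\begin{equation*}
||\varphi_n - \varphi|| \le ||(cI - \esA_n)^{-1}|| \cdot ||(\esA_n - \esA)\varphi|| \le C_\varphi\, ||\esA_n\varphi - \esA\varphi||,
\end{equation*}
with $C_\varphi$ the uniform operator-norm bound from the previous step. The principal obstacle is the squared-norm lemma itself; once it is established, the rest is straightforward operator algebra. The essential insight is that collective compactness forces the composition $V_n V_n$ to be small in norm even when the individual operators $V_n$ are not, and this is precisely what allows the Neumann series for $I - V_n^2$ to converge and hence to invert $I + V_n$.
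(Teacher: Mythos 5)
The paper does not supply a proof of Theorem \ref{thmConv}; it cites the result as classical (Atkinson, Goldberg \emph{et al.}, Kress). Your proof is the standard Anselone collectively compact argument from those references — the observation that a collectively compact, pointwise-null family has squared operators tending to zero in norm, the factorization $cI-\esA_n=(cI-\esA)(I+V_n)$, and the Neumann-type inversion of $I+V_n$ via $I-V_n^2$ — and it is correct. (The sources typically phrase the smallness lemma as $\|(\esA_n-\esA)\esA_n\|\to0$ and invert $I-\esA_n$ by perturbing $I+(cI-\esA)^{-1}\esA_n$, but that is merely a different packaging of the same idea; your $V_n^2$ version works because $\{V_n\}$ inherits collective compactness from $\{\esA_n\}$, the compactness of $\esA$, and the boundedness of $(cI-\esA)^{-1}$.)
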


In what follows, we establish conditions (C1)--(C3) for the
operator $\esA_n=\esG_n+\esH_n$ defined in (\ref{IntEqnApprox}).
We concentrate on $\esH_n$ since the result for $\esG_n$ is 
straightforward by continuity of its kernel.  Once 
(C1)--(C3) are established, the result in Theorem \ref{ConvThm}
will follow from a bound on $||\esA_n\varphi - \esA\varphi||$ 
under regularity assumptions on $\Gamma$ and $\varphi$.

\subsection{Lemmata}

We begin with a collection of useful results regarding
the weakly singular kernel $H$.  Parts (i) and (ii)
below show that the integral operator defined by $H$
indeed maps $C^0(\Gamma,\Rk)$ into itself, and is
equibounded and equicontinuous on bounded subsets,
hence compact.  Part (iii) is an important inequality
that will be used in the sequel.

\begin{lemma}
\label{LemmaOne}
Let $\Gamma$ satisfy {\rm (A0)} with Lyapunov radius 
$d>0$ and let $H$ satisfy {\rm (A2)} with exponent 
$0 < \mu \le 1$.  Then:
\begin{itemize}
\item[{\rm (i)}] $\int_\Gamma |H(x,y)|\, dA_y\le C$ for all 
$x\in\Gamma$.
\item[{\rm (ii)}] For every $\veps>0$ there exists $\delta>0$ 
such that $\int_\Gamma |H(x_*,y) - H(x_0,y)|\, dA_y\le \veps$ 
for all $x_0,x_*\in\Gamma$ with $|x_0-x_*|\le \delta$. 
\item[{\rm (iii)}] $|H(x_*,y) - H(x_0,y)| \le 
C|x_*-x_0|/|y - x_0|^{3-\mu}$ for all $x_0,x_*,y\in\Gamma$ 
with $0<|x_*-x_0|\le d/3$ and $2|x_*-x_0|\le |y-x_0| \le d$.
\end{itemize}
\end{lemma}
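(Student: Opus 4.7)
The plan is to prove the three parts in the order (iii), (i), (ii), since (iii) is a pointwise estimate that supplies the core bound for (ii), while (i) is a standalone localization argument. Throughout I would exploit the structural decomposition $H(x,y) = u(x,y)/|x-y|^{2-\mu}$ from (A2), localize to the Lyapunov patch $\Gamma_{x_0,d}$ using (L3) and (A0), and transfer surface integrals to planar integrals over $\Omega_{x_0,d}$ via the local Cartesian parameterization $\psi_{x_0}$, using that its Jacobian is bounded and that $|y-x_0|$ is comparable to the planar distance $|\xi|$ uniformly on the patch.

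For part (iii) I would decompose
\begin{equation*}
H(x_*,y) - H(x_0,y)
 = \frac{u(x_*,y) - u(x_0,y)}{|x_* - y|^{2-\mu}}
   + u(x_0,y)\biggl[\frac{1}{|x_* - y|^{2-\mu}}
                  - \frac{1}{|x_0 - y|^{2-\mu}}\biggr].
\end{equation*}
The hypothesis $|y-x_0| \ge 2|x_*-x_0|$ combined with the triangle inequality gives $|x_*-y| \ge |y-x_0|/2$, so both denominators are comparable to $|y-x_0|^{2-\mu}$. For the second summand, I would apply the one-variable mean value inequality to $r \mapsto r^{-2+\mu}$ together with $\bigl||x_*-y|-|x_0-y|\bigr| \le |x_*-x_0|$ and the uniform bound on $u$ from (HkerUbnd). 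For the first summand, I would connect $x_0$ to $x_*$ by the $\psi_{x_0}$-lift of the straight segment in $\Omega_{x_0,d}$ from $0$ to $\xi_{x_0}(x_*)$; by (A0) this segment lies in $\Omega_{x_0,d}$, the lifted curve has length $O(|x_*-x_0|)$, and each point $z$ on it satisfies $|y-z|$ comparable to $|y-x_0|$. Integrating the surface-derivative bound (HkerUx) with $s=1$ along this curve yields $|u(x_*,y)-u(x_0,y)| \le C|x_*-x_0|/|y-x_0|$, and combining with the denominator bound completes (iii).

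For part (i) I would split $\Gamma = \Gamma_{x,d/2} \cup (\Gamma\setminus\Gamma_{x,d/2})$. On the complement, $|x-y|$ is bounded below by $d/2$, so $|H|$ is uniformly bounded and the integral is dominated by $C|\Gamma|$. On the patch, a change of variables via $\psi_x$ gives, uniformly in $x$,
\begin{equation*}
\int_{\Gamma_{x,d/2}} |H(x,y)| \, dA_y
  \le C \int_{\Omega_{x,d/2}} \frac{d\xi}{|\xi|^{2-\mu}}
  \le C' \int_0^{d/2} r^{\mu-1} \, dr < \infty,
\end{equation*}
finite because $\mu > 0$. For part (ii), restricting to $\delta$ with $|x_*-x_0|\le d/6$, I would split $\Gamma$ into the near region $N=\{y\in\Gamma : |y-x_0| \le 2|x_*-x_0|\}$, the intermediate region $F=\{y\in\Gamma : 2|x_*-x_0| < |y-x_0| \le d\}$, and the distant region $D = \Gamma\setminus\Gamma_{x_0,d}$. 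On $N$ I would bound $\int_N|H(x_*,y)|\,dA_y$ and $\int_N|H(x_0,y)|\,dA_y$ separately as in (i), each yielding $O(|x_*-x_0|^\mu)$. On $F$ I would apply (iii) to reduce to $C|x_*-x_0|\int_{2|x_*-x_0|}^{d} r^{\mu-2}\,dr$, which is $O(|x_*-x_0|^\mu)$ when $\mu<1$ and $O(|x_*-x_0|\log(1/|x_*-x_0|))$ when $\mu=1$. On $D$, (A2) applied outside $\Upsilon_{d/2}$ gives $|H(x_*,y)-H(x_0,y)|\le C|x_*-x_0|$, and $|D|$ is bounded. All three contributions vanish with $\delta$.

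The hard part will be the mean-value estimate inside (iii). The difficulty is that the gradient bound $|D_x u(x,y)|\le C/|y-x|$ must be integrated along a path on the curved surface that simultaneously stays within a single Lyapunov patch and keeps its distance from $y$ comparable to $|y-x_0|$. The lifted straight segment together with the scale separation $|y-x_0|\ge 2|x_*-x_0|$ handles both, but one must carefully track how the Lyapunov, Lipschitz, and projection constants combine to reproduce the clean exponent $3-\mu$.
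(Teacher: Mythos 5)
Your argument for part (iii) is essentially the paper's: the same two-term decomposition of $H(x_*,y)-H(x_0,y)$, the same lifted straight segment $\psi_{x_0}(\tau\xi_{x_0}(x_*))$ used to integrate the bound $|D_x u|\le C/|y-x|$, and the same comparability of $|y-x_*|$, $|y-x(\tau)|$ with $|y-x_0|$ coming from $2|x_*-x_0|\le|y-x_0|$ (the paper manipulates $|r_0^{2-\mu}-r_*^{2-\mu}|$ rather than applying the mean value inequality directly to $r^{-(2-\mu)}$, but these are interchangeable). For (i) and (ii) the paper defers to classical references, noting only that (ii) relies on (iii); your near/intermediate/far splitting and the polar estimate on the Lyapunov patch is the standard argument those references use, so there is no substantive divergence.
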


\begin{proof}
Parts (i) and (ii) are classic results for weakly singular 
integrals, see for example \cite{Gunter:1967}, where the 
proof of (ii) relies on (iii).  For brevity, we illustrate 
only (iii).  To begin, we consider the form of $H$ in 
(\ref{Hker}) and use the triangle inequality to get
\begin{equation}
\begin{split}
|H(x_*,y) - H(x_0,y)| 
&=   \left| {u(x_*,y) \over |y - x_*|^{2-\mu}}
                  - {u(x_0,y) \over |y - x_0|^{2-\mu}} \right| \\
&\le  {|u(x_*,y) - u(x_0,y)| \over |y - x_*|^{2 - \mu}} 
    +\left| {u(x_0,y) \over |y - x_*|^{2 - \mu}} 
              - {u(x_0,y) \over |y - x_0|^{2 - \mu}} \right|.
\end{split}
\label{LOne1}
\end{equation}

For the first term in (\ref{LOne1}), we consider the local
Cartesian coordinate map $\psi_{x_0}$ in the Lyapunov patch 
$\Gamma_{x_0,d}$, and the curve $x(\tau)=\psi_{x_0}(\tau\xi_*)$, 
$0\le\tau\le 1$, from $x_0=\psi_{x_0}(0)$ to 
$x_*=\psi_{x_0}(\xi_*)$. Using this curve, we have
\begin{equation}
u(x_*,y) - u(x_0,y) 
= \int_0^1 D_x u(x(\tau),y) {d\over d\tau}x(\tau)\; d\tau.
\end{equation}
From (\ref{HkerUx}) and the relation
${dx / d\tau} = (\partial\psi_{x_0}/\partial\xi) \xi_*$,
and the facts that $|\partial\psi_{x_0}/\partial\xi|\le C$ 
and $|\xi_*|\le C|x_*-x_0|$, which follow from the Lipschitz
properties of $\psi_{x_0}$ and $\psi^{-1}_{x_0}$, we get
\begin{equation}
|u(x_*,y) - u(x_0,y)| 
\le  \int_0^1 {C|x_*-x_0|\over |x(\tau)-y|} \; d\tau.
\label{LOne1.5}
\end{equation}
From the definition of $x(\tau)$, we deduce that $|x(\tau)-x_0|$ 
is an increasing function, and hence $|x(\tau)-x_0|\le |x_*-x_0|$ 
for all $0\le \tau\le 1$.  Moreover, by hypothesis, we have
$0<|x_*-x_0|\le |y-x_0|/2$.  These two results imply, with 
the help of the triangle inequality,
\begin{equation}
{1\over 2}\le {|y-x(\tau)|\over |y-x_0|}\le {3\over 2},
\quad
\tau\in[0,1],
\quad
\hbox{\rm and}
\quad
{1\over 2}\le {|y-x_*|\over |y-x_0|}\le {3\over 2},
\label{LOne1.6}
\end{equation}
and using (\ref{LOne1.6}) in (\ref{LOne1.5}), we find
\begin{equation}
|u(x_*,y) - u(x_0,y)| 
\le   {C|x_*-x_0|\over |y-x_*|}.
\label{LOne2}
\end{equation}

For the second term in (\ref{LOne1}), we notice
that $|u|\le C$ by (\ref{HkerUbnd}).  Hence, using 
the notation $r=|y-x|$, we have
\begin{equation}
\left| {u(x_0,y) \over |y - x_*|^{2 - \mu}} 
        - {u(x_0,y) \over |y - x_0|^{2 - \mu}} \right|
\le C \left|{1 \over r_*^{2 - \mu}} 
        - {1 \over r_0^{2 - \mu}}\right|
= {C|r_0^{2-\mu} - r_*^{2-\mu}|
                       \over r_0^{2-\mu} r_*^{2-\mu}}.
\label{LOne4}
\end{equation}
From (\ref{LOne1.6}) and the hypothesis 
$0<2|x_*-x_0|\le |y-x_0|\le d$ we notice that
\begin{equation}
0< r_0\le d
\quad
\hbox{\rm and}
\quad
0< r_0 \le 2r_* \le 3r_0 \le 3d.
\label{LOne5}
\end{equation}
Regarding the factor $|r_0^{2-\mu} - r_*^{2-\mu}|$ in 
(\ref{LOne4}), we have, since the function
$r^{1-\mu}$ is monotonic,
\begin{equation}
|r_0^{2-\mu} - r_*^{2-\mu}|
= \left|\int_{r_0}^{r_*} {d\over dr}[r^{2-\mu}]\; dr \right|
\le (2-\mu) \max\{r_0^{1-\mu},r_*^{1-\mu}\} |r_*-r_0|.
\label{LOne6}
\end{equation}
Combining (\ref{LOne6}) and (\ref{LOne5}) with
(\ref{LOne4}), we find
\begin{equation}
\left| {u(x_0,y) \over |y - x_*|^{2 - \mu}} 
        - {u(x_0,y) \over |y - x_0|^{2 - \mu}} \right|
\le {C|r_* - r_0| \over r_0^{3-\mu}}
\le {C|x_* - x_0| \over |y-x_0|^{3-\mu}},
\label{LOne7}
\end{equation}
where the last inequality follows from the definitions 
of $r_0$ and $r_*$, and the straightforward inequality 
$|r_*-r_0|\le |x_*-x_0|$.  Substituting (\ref{LOne7}) 
and (\ref{LOne2}) into (\ref{LOne1}), and again
using (\ref{LOne1.6}) on the first term, we obtain 
the desired result.
\end{proof}

The next result can be viewed, in part, as a
discrete analog of Lemma \ref{LemmaOne}.  It will
play a central role in establishing the collective
compactness of the operators $\esH_n$ associated 
with the numerical method.  The defining elements 
of the method, which are the surface decomposition, 
quadrature rule, and nodal partition of unity 
functions, are denoted by the set 
$\{\smash{\Gamma^e}, \smash{x_q^e}, \smash{W_q^e}, 
\smash{\zeta_q^e}, \smash{\widehat\zeta_q^e}\}$.

\begin{lemma}
\label{LemmaTwo}
Let $\Gamma$ satisfy {\rm (A0)} with Lyapunov 
radius $d>0$, $H$ satisfy {\rm (A2)} with 
exponent $0 < \mu \le 1$, and 
$\{\smash{\Gamma^e}, \smash{x_q^e}, 
\smash{W_q^e}, \smash{\zeta_q^e}, 
\smash{\widehat\zeta_q^e}\}$
satisfy {\rm (A4)}--{\rm (A7)}.  Then:
\begin{itemize}
\item[{\rm (i)}] $\zeta_b(x)H(x,x_b)$ $(:=0$ at $x=x_b)$ 
is continuous in $x\in\Gamma$ for each $b=1,\ldots,n$
and $n\ge n_0$.
\item[{\rm (ii)}] $\sum_{b=1}^n |\zeta_b(x)H(x,x_b)W_b|\le C$ 
for all $x\in\Gamma$ and $n\ge n_0$.
\item[{\rm (iii)}] For every $\veps>0$ there exists $\delta>0$
such that $\sum_{b=1}^n |\zeta_b(x_*)H(x_*,x_b)W_b - 
\zeta_b(x_0)H(x_0,x_b)W_b| \le \veps$ for all $x_0,x_*\in\Gamma$ 
with $|x_0-x_*|\le \delta$ and $n\ge n_0$.
\end{itemize}
\end{lemma}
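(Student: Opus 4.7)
My plan is to treat (i) as a routine continuity argument and to view (ii) and (iii) as discrete analogues of Lemma \ref{LemmaOne}(i) and (ii), both proved via a dyadic packing of the quadrature nodes enabled by the node-spacing condition (A5). For (i), the only point in question is $x=x_b$: off the diagonal, $H(x,x_b)$ is continuous by (A2) and $\zeta_b$ is continuous by (A7). At $x=x_b$, combining $|\zeta_b(x)|\le C|x-x_b|^2/h^2$ with $|H(x,x_b)|\le C|x-x_b|^{-(2-\mu)}$ gives $|\zeta_b(x)H(x,x_b)|\le C|x-x_b|^{\mu}/h^2$, which vanishes as $x\to x_b$.

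For (ii) I will bound $|\zeta_b(x)H(x,x_b)W_b|$ in two regimes: when $|x-x_b|<h$, use the quadratic estimate on $\zeta_b$ together with $W_b\le C h^2$ to get the bound $C|x-x_b|^{\mu}$; when $|x-x_b|\ge h$, use $|\zeta_b|\le 1$ to get the bound $Ch^2/|x-x_b|^{2-\mu}$. By (A5) the nodes are at least $Ch$ apart, so a ball of radius $\rho$ about $x$ contains at most $C(\rho/h)^2$ nodes. Grouping nodes into dyadic annuli $\{2^kh\le|x-x_b|<2^{k+1}h\}$, I get $O(4^k)$ nodes in the $k$th shell, each contributing $Ch^\mu 2^{-k(2-\mu)}$, so the shell contributes $Ch^\mu 2^{k\mu}$. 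Summing over $k$ up to $2^kh\sim\diam(\Gamma)$ telescopes to a bound of order $\diam(\Gamma)^\mu$, independent of $h$ and $x$.

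For (iii), given $\veps>0$, I will pick an auxiliary radius $R\in(0,d/2)$ and split the index set into $A=\{b:|x_b-x_0|\le R\}$ and its complement $B$. For $b\in A$ I bound the difference by the sum of the two absolute values and apply the part (ii) estimate restricted to balls of radii $R$ about $x_0$ and $R+\eta\le 2R$ about $x_*$ (where $\eta=|x_*-x_0|$), yielding a contribution of order $CR^\mu$ uniformly in $h$ and $\eta$. For $b\in B$, once $h$ and $\eta$ are small enough relative to $R$ the support bound in (A7) forces $\zeta_b(x_0)=\zeta_b(x_*)=1$, so the cutoff drops out and the term becomes $|H(x_*,x_b)-H(x_0,x_b)|W_b$. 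On $\{R<|x_b-x_0|\le d\}$ I invoke Lemma \ref{LemmaOne}(iii) and repeat the dyadic packing, producing a bound of order $C\eta R^{\mu-1}$ (with a $\log(d/R)$ factor when $\mu=1$); on $\{|x_b-x_0|>d\}$ the off-diagonal Lipschitz regularity of $H$ from (A2) gives a contribution of order $C_d\eta|\Gamma|$. Choosing $R$ first so that $CR^\mu\le\veps/2$, then $\delta\le R/2$ small enough that the remaining $\eta$-linear terms are at most $\veps/2$, finishes the argument.

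The main technical obstacle is the bookkeeping in (iii): I must ensure that the transition from $\zeta_b$ to the constant $1$ on $B$, together with the dyadic packing estimates, is uniform in $n$, so that the $\delta$ extracted depends only on $\veps$ and not on $h$. The borderline case $\mu=1$ is delicate because the geometric series becomes a harmonic one, producing the logarithmic factor that must still be absorbed into $\veps/2$ once $R$ is fixed; the saving grace is that $R$ has already been chosen before $\delta$, so this logarithm is a finite constant at the moment $\delta$ is selected.
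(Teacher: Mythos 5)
Your proposal is correct and proves the lemma, but it takes a genuinely different route from the paper in parts (ii) and (iii). The paper decomposes the node sum by surface elements (inner elements $I_{x_0,\beta h}$, middle elements $I_{x_0,\delta}\backslash I_{x_0,\beta h}$, and outer elements), and for the middle range it bounds the discrete sum by comparing it to the integral $\int_{\Gamma_{x_0,\delta}}|y-x_0|^{\mu-2}\,dA_y$: for each element it identifies the node $q(x_0,e)$ closest to $x_0$ and the subset $\Gamma_{x_0,\rm in}^e=\Gamma^e\cap B(x_0,\lambda(x_0,e))$, using (A5) to show $|\Gamma_{x_0,\rm in}^e|\ge Ch^2$ so that the per-element discrete term is dominated by the per-element integral. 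You instead use a dyadic node-packing argument, relying only on the minimum separation from (A5) and the two-dimensionality of $\Gamma$ to count at most $O(4^k)$ nodes in the $k$th shell and then sum the geometric series; this avoids all the auxiliary per-element constructions and is arguably more elementary, though the bookkeeping (particularly the borderline $\mu=1$ case in part (iii)) is a bit more delicate. Part (i) is identical to the paper's. One point to tighten: as you note, the transition to $\zeta_b\equiv1$ on $B$ requires $h\lesssim R$, so your choice of $\delta$ formally covers only $n\ge N(R)$; the remaining finitely many $n\in[n_0,N(R))$ must be handled by continuity (part (i) plus compactness of $\Gamma$), which is exactly the case the paper also dismisses as a "straightforward consequence of part (i)." Your argument is sound once that finite tail is treated explicitly.
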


\begin{proof}
For part (i), we notice that $\zeta_b(x)$ is continuous
for all $x$, and $H(x,x_b)$ is continuous for all
$x\ne x_b$.  Hence, to establish the result, we need
only verify that $\zeta_b(x)H(x,x_b)\to 0$ as $x\to x_b$.
From {\rm (A2)} and {\rm (A7)}, we get
\begin{equation}
|\zeta_b(x)H(x,x_b)|
=  {|\zeta_b(x)u(x,x_b)|\over |x-x_b|^{2-\mu}}
\le  {C|x-x_b|^{\mu}\over h^2},
\end{equation}
and the result follows since $0<\mu\le 1$.

For part (ii), let $\delta\in(0,d/3]$ and $\beta\ge 10$ 
be any given numbers and consider sufficiently refined 
surface decompositions with $E\ge E_{\beta,\delta}$,
where $E_{\beta,\delta}\ge E_0$ is a given integer 
defined such that $0<\beta h \le \delta/2$ for all 
$E\ge E_{\beta,\delta}$.  We consider an arbitrary 
$x=x_0\in\Gamma$ and consider collections of surface 
elements $\Gamma^e$, $e=1,\ldots,E$, defined by
\begin{equation}
I_{x_0,\delta}=\{e\;|\; \Gamma^e \subset\Gamma_{x_0,\delta}\}
\quad
\hbox{\rm and}
\quad
I_{x_0,\beta h}=\{e\;|\; \Gamma^e \subset\Gamma_{x_0,\beta h}\},
\end{equation}
and note that, by design, the sets $I_{x_0,\delta}$, 
$I_{x_0,\beta h}\subset I_{x_0,\delta}$ and 
$I_{x_0,\delta}\backslash I_{x_0,\beta h}$ 
are non-empty for all $E\ge E_{\beta,\delta}$.  
Moreover, we consider the decomposition
\begin{equation}
\sum_{b=1}^n \zeta_b(x_0)H(x_0,x_b)W_b 
= \esS_1(x_0) + \esS_2(x_0) + \esS_3(x_0),
\label{LTwoDecomp} 
\end{equation}
where
\begin{align}
\esS_1(x_0) 
&= \sum_{e\in I_{x_0,\beta h}} 
                    \sum_{q=1}^Q \zeta_q^e(x_0)H(x_0,x_q^e)W_q^e, 
\label{LTwoSum1} \\
\esS_2(x_0) 
&= \sum_{e\in I_{x_0,\delta}\backslash I_{x_0,\beta h}} 
                    \sum_{q=1}^Q \zeta_q^e(x_0)H(x_0,x_q^e)W_q^e, 
\label{LTwoSum2} \\
\esS_3(x_0) 
&= \sum_{e\not\in I_{x_0,\delta}} 
                    \sum_{q=1}^Q \zeta_q^e(x_0)H(x_0,x_q^e)W_q^e. 
\label{LTwoSum3} 
\end{align}
To establish the result, we show that each of the sums 
$\esS_1$, $\esS_2$ and $\esS_3$ is uniformly bounded for 
all $E\ge E_{\beta,\delta}$ and $x_0\in\Gamma$.  Boundedness 
for the finite interval $E\in[E_0,E_{\beta,\delta}]$ is a 
straightforward consequence of part (i) and will be omitted 
for brevity.

For the term $\esS_1$ in (\ref{LTwoDecomp}), we consider the 
local Cartesian coordinate map $\psi_{x_0}$ in the Lyapunov 
patch $\Gamma_{x_0,d}$.  Since 
$\Gamma_{x_0,\beta h}\subset\Gamma_{x_0,d}$, and $\psi_{x_0}$ 
and $\psi_{x_0}^{-1}$ are Lipschitz, we have the area bound 
$|\Gamma_{x_0,\beta h}| \le C h^2$.  Moreover, from (A4) and 
the fact that 
$(\cup_{e\in I_{x_0,\beta h}}\Gamma^e)\subset\Gamma_{x_0,\beta h}$,
we get $C h^2 |I_{x_0,\beta h}| \le |\Gamma_{x_0,\beta h}|$, 
where $|I_{x_0,\beta h}|$ denotes the number of elements in the
index set $I_{x_0,\beta h}$.  From these two observations, we
deduce the uniform bound
\begin{equation}
|I_{x_0,\beta h}| \le C,
\quad
\forall E\ge E_{\beta,\delta},
\quad
x_0\in\Gamma.
\label{LTwoSum1A}
\end{equation}
From (\ref{LTwoSum1}) we get, using (A2) and (A7), 
\begin{equation}
\begin{split}
|\esS_1(x_0)| 
&\le \sum_{e\in I_{x_0,\beta h}} 
       \sum_{q=1}^Q |\zeta_q^e(x_0)H(x_0,x_q^e)W_q^e|, \\
&\le \sum_{e\in I_{x_0,\beta h}} 
       \sum_{q=1}^Q {C|x_0- x_q^e|^2\over h^2}
                       {C\over |x_0- x_q^e|^{2-\mu}} W_q^e. 
\end{split}
\label{LTwoSum1B}
\end{equation}
Since $x_q^e\in\Gamma^e\subset \Gamma_{x_0,\beta h}$ we
have $|x_0- x_q^e|\le \beta h$, and from (A5) we have
$\sum_{q=1}^Q W_q^e\le C h^2$.  Using these results
together with (\ref{LTwoSum1B}) and (\ref{LTwoSum1A}),
we find that
\begin{equation}
|\esS_1(x_0)| \le C h^{\mu}, 
\quad
\forall E\ge E_{\beta,\delta},
\quad
x_0\in\Gamma,
\label{LTwoSum1C}
\end{equation}
which establishes the result for $\esS_1$ since $0<\mu\le 1$.  
Indeed, the sum $\esS_1(x_0)\to 0$ as $E\to\infty$ ($h\to 0$) 
uniformly in $x_0\in\Gamma$.

For the term $\esS_2$ in (\ref{LTwoDecomp}), we notice 
first that $\Gamma_{x_0,(\beta-1)h}\subset\Gamma_{x_0,\beta h}$
and that $\dist(\partial \Gamma_{x_0,(\beta-1)h},
\partial\Gamma_{x_0,\beta h})\ge h\ge\diam(\Gamma^e)$.
Hence, if $\Gamma^e\cap\Gamma_{x_0,(\beta-1)h}\ne\emptyset$,
then $\Gamma^e\subset \Gamma_{x_0,\beta h}$.
From this we deduce that $\Gamma^e\subset\Gamma_{x_0,\delta}$ 
and $\Gamma^e\cap\Gamma_{x_0,(\beta-1)h}=\emptyset$ for all 
$e\in I_{x_0,\delta}\backslash I_{x_0,\beta h}$.  This 
implies
\begin{equation}
0<(\beta-1)h < |x_q^e-x_0| \le \delta,
\quad
\forall e\in I_{x_0,\delta}\backslash I_{x_0,\beta h},
\quad 
q=1,\ldots,Q.
\label{LTwoSum2A}
\end{equation}
To each quadrature element $\Gamma^e$ with 
$e\in I_{x_0,\delta}\backslash I_{x_0,\beta h}$
we associate a distinguished node 
$q(x_0,e)\in\{1,\ldots,Q\}$, radius 
$\lambda(x_0,e)\in((\beta-1)h,\delta]$ and 
subset $\Gamma_{x_0,\rm in}^e\subset\Gamma^e$ 
defined by
\begin{equation}
\begin{gathered}
\min_{q=1,\ldots,Q} |x_q^e-x_0| = |x_{q(x_0,e)}^e - x_0|, \\
\lambda(x_0,e) = |x_{q(x_0,e)}^e - x_0|, 
\qquad
\Gamma_{x_0,\rm in}^e = \Gamma^e \cap B(x_0,\lambda(x_0,e)).
\end{gathered}
\label{LTwoSum2B}
\end{equation}
From (A5) we get
$Ch \le \dist(x^e_q,\partial\Gamma^e)\le h$ for all
$q=1,\ldots,Q$, and from this we deduce that the area 
of $\Gamma_{x_0,\rm in}^e$ is bounded from below, 
namely $|\Gamma_{x_0,\rm in}^e|\ge Ch^2$.  Combining 
this result with (A4), we find that
\begin{equation}
C|\Gamma^e|\le |\Gamma_{x_0,\rm in}^e| \le |\Gamma^e|,
\quad
\forall e\in I_{x_0,\delta}\backslash I_{x_0,\beta h},
\quad
E\ge E_{\beta,\delta}.
\label{LTwoSum2C}
\end{equation}
From (\ref{LTwoSum2}) we get, using (A2), (A4) and (A5),
and the fact that $|\zeta_q^e(x_0)|\le 1$, together with 
(\ref{LTwoSum2B}$)_1$ and (\ref{LTwoSum2C}),
\begin{equation}
\begin{split}
|\esS_2(x_0)| 
&\le \sum_{e\in I_{x_0,\delta}\backslash I_{x_0,\beta h}} 
       \sum_{q=1}^Q |\zeta_q^e(x_0)H(x_0,x_q^e)W_q^e|, \\
&\le \sum_{e\in I_{x_0,\delta}\backslash I_{x_0,\beta h}} 
       \sum_{q=1}^Q {C\over |x_0- x_q^e|^{2-\mu}} W_q^e, \\ 
&\le \sum_{e\in I_{x_0,\delta}\backslash I_{x_0,\beta h}} 
       {C\over |x_0- x_{q(x_0,e)}^e|^{2-\mu}} \sum_{q=1}^Q W_q^e, \\ 
&\le \sum_{e\in I_{x_0,\delta}\backslash I_{x_0,\beta h}} 
       {C |\Gamma_{x_0,\rm in}^e| \over |x_0- x_{q(x_0,e)}^e|^{2-\mu}} . 
\end{split}
\label{LTwoSum2D}
\end{equation}
Moreover, by definition of $\Gamma_{x_0,\rm in}^e$ and the
fact that $x_{q(x_0,e)}^e\in\partial B(x_0,\lambda(x_0,e))$,
we get
\begin{equation}
{1\over |x_{q(x_0,e)}^e - x_0|} 
         \le {1\over |y-x_0|},
\quad
\forall y\in\Gamma_{x_0,\rm in}^e.
\label{LTwoSum2E}
\end{equation}
Combining (\ref{LTwoSum2E}) with (\ref{LTwoSum2D}), 
and using the fact that 
$\Gamma_{x_0,\rm in}^e\subset\Gamma^e$, we find 
\begin{equation}
\begin{split}
|\esS_2(x_0)| 
&\le \sum_{e\in I_{x_0,\delta}\backslash I_{x_0,\beta h}} 
   \int_{\Gamma_{x_0,\rm in}^e} {C \over |y-x_0|^{2-\mu}}\; dA_y, \\ 
&\le \sum_{e\in I_{x_0,\delta}\backslash I_{x_0,\beta h}} 
   \int_{\Gamma^e} {C \over |y-x_0|^{2-\mu}}\; dA_y, \\ 
&\le \int_{\Gamma_{x_0,\delta}} {C \over |y-x_0|^{2-\mu}}\; dA_y
\le C \delta^\mu, 
\quad 
\forall E\ge E_{\beta,\delta},  
\quad
x_0\in\Gamma,
\end{split}
\label{LTwoSum2F}
\end{equation}
where the last inequality follows from a direct estimate 
of the integral using polar coordinates and the fact that
the local coordinate maps $\psi_{x_0}$ and $\psi_{x_0}^{-1}$
are Lipschitz.  Thus the result for $\esS_2$ is established.

For the final term $\esS_3$ in (\ref{LTwoDecomp}), 
we notice similar to before that 
$\Gamma_{x_0,\delta-h}\subset\Gamma_{x_0,\delta}$
and that $\dist(\partial \Gamma_{x_0,\delta-h},
\partial\Gamma_{x_0,\delta})\ge h\ge\diam(\Gamma^e)$.
Hence, if $\Gamma^e\cap\Gamma_{x_0,\delta-h}\ne\emptyset$,
then $\Gamma^e\subset \Gamma_{x_0,\delta}$.
From this we deduce that 
$\Gamma^e\cap\Gamma_{x_0,\delta-h}=\emptyset$ for all
$e\not\in I_{x_0,\delta}$.  This implies
\begin{equation}
0<\delta-h < |y-x_0| 
\quad
\forall y\in\Gamma^e,
\quad
e\not\in I_{x_0,\delta},
\quad 
E\ge E_{\beta,\delta}.
\label{LTwoSum3A}
\end{equation}
From (\ref{LTwoSum3}) we get, using (A2), (A4) and (A5),
and the fact that $|\zeta_q^e(x_0)|\le 1$ and 
$\beta h \le \delta/2$, together with (\ref{LTwoSum3A}),
\begin{equation}
\begin{split}
|\esS_3(x_0)| 
&\le \sum_{e\not\in I_{x_0,\delta}} 
       \sum_{q=1}^Q |\zeta_q^e(x_0)H(x_0,x_q^e)W_q^e|, \\
&\le \sum_{e\not\in I_{x_0,\delta}} 
       \sum_{q=1}^Q {C\over |x_0- x_q^e|^{2-\mu}} W_q^e, \\ 
&\le \sum_{e\not\in I_{x_0,\delta}} 
       \sum_{q=1}^Q {C\over \delta^{2-\mu}} W_q^e, \\ 
&\le  {C|\Gamma|\over \delta^{2-\mu}} 
\le C \delta^{\mu-2}, 
\quad 
\forall E\ge E_{\beta,\delta},  
\quad
x_0\in\Gamma.
\end{split}
\label{LTwoSum3B}
\end{equation}
Thus the result for $\esS_3$ is established.  The desired 
result stated in part (ii) follows from (\ref{LTwoSum3B}), 
(\ref{LTwoSum2F}), (\ref{LTwoSum1C}) and (\ref{LTwoDecomp}).

For part (iii), let $\delta\in(0,d/3]$, $\beta\ge 10$ 
and $E_{\beta,\delta}\ge E_0$ be given numbers defined 
as before, so $0<\beta h \le \delta/2$ for all 
$E\ge E_{\beta,\delta}$.   Moreover, in view of (A7), 
we suppose that $E_{\beta,\delta}$ is sufficiently large
so that $\diam(\supp(\smash{\widehat\zeta_b}))\le \delta/2$ 
for all $b=1,\ldots,n$ and $E\ge E_{\beta,\delta}$.
Since the result in part (iii) trivially holds when 
$x_0=x_*$, we consider without loss of generality any 
arbitrary points $x_0,x_*\in\Gamma$ with 
$0<|x_0-x_*|\le \delta$.  Given such points, we 
consider the following collections of surface 
elements $\Gamma^e$, $e=1,\ldots,E$, defined in
the same way as before:
\begin{equation}
I_{x_0,\beta h} 
\subset
I_{x_0,2\delta}
\subset
I_{x_0,d}
\quad
\hbox{\rm and}
\quad
I_{x_*,\beta h}
\subset
I_{x_*,3\delta}
\subset
I_{x_*,d}.
\end{equation}
By design, each of the above sets is non-empty, 
as well as each of 
$I_{x_0,d}\backslash I_{x_0,2\delta}$, 
$I_{x_0,2\delta}\backslash I_{x_0,\beta h}$,
$I_{x_*,d}\backslash I_{x_*,3\delta}$ 
and
$I_{x_*,3\delta}\backslash I_{x_*,\beta h}$ 
for all $E\ge E_{\beta,\delta}$.  Moreover, we 
have the inclusion $I_{x_0,2\delta}\subset I_{x_*,3\delta}$
for all $E\ge E_{\beta,\delta}$.  Analogous to before,
we consider the decomposition
\begin{equation}
\sum_{b=1}^n F_b(x_0) - F_b(x_*)
= \esF_1(x_0,x_*) + \esF_2(x_0,x_*) + \esF_3(x_0,x_*),
\label{LTwoDecompFsum} 
\end{equation}
where $F_b(x)= \zeta_b(x)H(x,x_b)W_b$ and 
\begin{align}
\esF_1(x_0,x_*) 
&= \sum_{e\in I_{x_0,2\delta}} 
                    \sum_{q=1}^Q F_q^e(x_0) - F_q^e(x_*), 
\label{LTwoFsum1} \\
\esF_2(x_0,x_*) 
&= \sum_{e\in I_{x_0,d}\backslash I_{x_0,2\delta}} 
                    \sum_{q=1}^Q F_q^e(x_0) - F_q^e(x_*),
\label{LTwoFsum2} \\
\esF_3(x_0,x_*) 
&= \sum_{e\not\in I_{x_0,d}} 
                    \sum_{q=1}^Q F_q^e(x_0) - F_q^e(x_*).
\label{LTwoFsum3} 
\end{align}
To establish the result, we show that, for every $\veps>0$,
there exists a $\delta\in(0,d/3]$ such that 
$|\esF_1|\le \veps$, $|\esF_2|\le\veps$ and 
$|\esF_3|\le \veps$ for all $0<|x_0-x_*|\le\delta$ and 
$E\ge E_{\beta,\delta}$.  Results for the finite interval 
$E\in[E_0,E_{\beta,\delta}]$ are a straightforward 
consequence of part (i) and will be omitted for brevity.

For the term $\esF_1$ in (\ref{LTwoDecompFsum}), we use 
the inclusion $I_{x_0,2\delta}\subset I_{x_*,3\delta}$,
and the inclusions $I_{x_0,\beta h}\subset I_{x_0,2\delta}$ 
and $I_{x_*,\beta h}\subset I_{x_*,3\delta}$, to obtain
\begin{equation}
\begin{split}
|\esF_1(x_0,x_*)| 
&\le \sum_{e\in I_{x_0,2\delta}}\sum_{q=1}^Q |F_q^e(x_0)| 
     +  \sum_{e\in I_{x_*,3\delta}}\sum_{q=1}^Q |F_q^e(x_*)|  \\
&= \Big(\sum_{e\in I_{x_0,\beta h}} 
    + \sum_{e\in I_{x_0,2\delta}\backslash I_{x_0,\beta h}}\Big)
                                    \sum_{q=1}^Q |F_q^e(x_0)| \\
& \qquad + \Big(\sum_{e\in I_{x_*,\beta h}} 
    + \sum_{e\in I_{x_*,3\delta}\backslash I_{x_*,\beta h}}\Big)
                                   \sum_{q=1}^Q |F_q^e(x_*)|. \\
\end{split}
\end{equation}
From the definition of $F_q^e$, and the same arguments in
part (ii) that yielded (\ref{LTwoSum1C}) and (\ref{LTwoSum2F}), 
we find
\begin{equation}
\begin{split}
|\esF_1(x_0,x_*)| 
\le 
(Ch^\mu + C\delta^\mu) 
+ (Ch^\mu + C\delta^\mu)  
\le C\delta^\mu,  
\quad \forall E\ge E_{\beta,\delta},  
\end{split}
\end{equation}
where $0<\mu\le 1$ and the last inequality follows from 
the fact that $h\le \delta/(2\beta)$.  Thus, for every 
$\veps>0$, we can choose $\delta\in(0,d/3]$ sufficiently 
small to get the uniform bound
\begin{equation}
|\esF_1(x_0,x_*)| \le \veps,
\quad 
\forall E\ge E_{\beta,\delta},  
\quad 
x_0,x_*\in\Gamma,
\quad 
0<|x_0-x_*|\le\delta.
\label{LTwoFsum1A} 
\end{equation}

For the term $\esF_2$ in (\ref{LTwoDecompFsum}), 
we notice first that 
$\Gamma_{x_0,2\delta-h}\subset\Gamma_{x_0,2\delta}$
and moreover that $\dist(\partial \Gamma_{x_0,2\delta-h},
\partial\Gamma_{x_0,2\delta})\ge h\ge\diam(\Gamma^e)$.
Hence, if $\Gamma^e\cap\Gamma_{x_0,2\delta-h}\ne\emptyset$,
then $\Gamma^e\subset \Gamma_{x_0,2\delta}$.  From this 
we deduce that $\Gamma^e\subset\Gamma_{x_0,d}$ and
$\Gamma^e\cap\Gamma_{x_0,2\delta-h}=\emptyset$ for all
$e\in I_{x_0,d}\backslash I_{x_0,2\delta}$.  This implies
\begin{equation}
{39\delta\over 20} \le 2\delta-h < |y-x_0| \le d,
\quad
\forall y\in\Gamma^e,
\;
e\in I_{x_0,d}\backslash I_{x_0,2\delta},
\;
E\ge E_{\beta,\delta}.
\label{LTwoFsum2A} 
\end{equation}
Thus, by a slight generalization of Lemma 
\ref{LemmaOne}(iii), we have
\begin{equation}
|H(x_*,y) - H(x_0,y)| 
    \le {C|x_*-x_0|\over|y - x_0|^{3-\mu}},
\quad
\forall y\in\Gamma^e,
\;
e\in I_{x_0,d}\backslash I_{x_0,2\delta},
\; 
E\ge E_{\beta,\delta}.
\label{LTwoFsum2B} 
\end{equation}
Furthermore, since 
$\diam(\supp(\smash{\widehat\zeta_b}))\le \delta/2$, 
we have
\begin{equation}
\zeta_q^e(x_0)=1,
\quad
\zeta_q^e(x_*)=1,
\quad
\forall x_q^e\in\Gamma^e,
\;
e\in I_{x_0,d}\backslash I_{x_0,2\delta},
\; 
E\ge E_{\beta,\delta}.
\label{LTwoFsum2C} 
\end{equation}
Combining (\ref{LTwoFsum2C}) and (\ref{LTwoFsum2B}) 
with (\ref{LTwoFsum2}), and using the fact that
$0<|x_0-x_*|\le\delta$, we get
\begin{equation}
\begin{split}
|\esF_2(x_0,x_*)| 
&\le \sum_{e\in I_{x_0,d}\backslash I_{x_0,2\delta}} 
      \sum_{q=1}^Q |\zeta_q^e(x_0)H(x_0,x_q^e)W_q^e 
                  - \zeta_q^e(x_*)H(x_*,x_q^e)W_q^e|, \\
&\le \sum_{e\in I_{x_0,d}\backslash I_{x_0,2\delta}} 
      \sum_{q=1}^Q |H(x_0,x_q^e) - H(x_*,x_q^e)| W_q^e, \\
&\le \sum_{e\in I_{x_0,d}\backslash I_{x_0,2\delta}} 
      \sum_{q=1}^Q {C\delta\over|x_q^e - x_0|^{3-\mu}} W_q^e. 
\end{split}
\label{LTwoFsum2D} 
\end{equation}
Using the same arguments as in 
(\ref{LTwoSum2D})--(\ref{LTwoSum2F}) with analogous
quantities $q(x_0,e)$, $\lambda(x_0,e)$ and
$\Gamma_{x_0,\rm in}^e$, we obtain
\begin{equation}
\begin{split}
|\esF_2(x_0,x_*)| 
&\le \sum_{e\in I_{x_0,d}\backslash I_{x_0,2\delta}} 
      \int_{\Gamma^e} {C\delta\over|y - x_0|^{3-\mu}} \; dA_y, \\
&\le  \int_{\Gamma_{x_0,d}\backslash\Gamma_{x_0,2\delta-h}} 
                     {C\delta\over|y - x_0|^{3-\mu}} \; dA_y, \\
&\le  
\left.
\begin{cases}
C\delta + C\delta^\mu, &\hfill 0<\mu<1 \\ 
C\delta + C\delta\ln{\delta}, &\hfill \mu=1 
\end{cases}
\right]
\quad 
\forall E\ge E_{\beta,\delta},  
\end{split}
\label{LTwoFsum2E} 
\end{equation}
where the last inequality follows from a direct estimate
of the integral using polar coordinates and the fact that
the local coordinate maps $\psi_{x_0}$ and $\psi_{x_0}^{-1}$
are Lipschitz.  Hence, for every $\veps>0$, we can choose 
$\delta\in(0,d/3]$ sufficiently small to get
the uniform bound
\begin{equation}
|\esF_2(x_0,x_*)| \le \veps,
\quad 
\forall E\ge E_{\beta,\delta},  
\quad 
x_0,x_*\in\Gamma,
\quad 
0<|x_0-x_*|\le\delta.
\label{LTwoFsum2F} 
\end{equation}

For the last term $\esF_3$ in (\ref{LTwoDecompFsum}), 
we notice similar to before that 
$\Gamma_{x_0,d-h}\subset\Gamma_{x_0,d}$
and that $\dist(\partial \Gamma_{x_0,d-h},
\partial\Gamma_{x_0,d})\ge h\ge\diam(\Gamma^e)$.
Hence, if $\Gamma^e\cap\Gamma_{x_0,d-h}\ne\emptyset$,
then $\Gamma^e\subset \Gamma_{x_0,d}$.
From this we deduce that 
$\Gamma^e\cap\Gamma_{x_0,d-h}=\emptyset$  for all 
$e\not\in I_{x_0,d}$.  This implies, using the 
fact that $h\le\delta/(2\beta)\le d/(6\beta)$,
\begin{equation}
{59 d\over 60} \le d-h < |y-x_0|, 
\quad
\forall y\in\Gamma^e,
\quad
e\not\in I_{x_0,d},
\quad 
E\ge E_{\beta,\delta}.
\label{LTwoFsum3A}
\end{equation}
Moreover, since $0<|x_0-x_*|\le\delta\le d/3$, 
we deduce from the triangle inequality that
\begin{equation}
{39 d\over 60} \le |y-x_*|, 
\quad
\forall y\in\Gamma^e,
\quad
e\not\in I_{x_0,d},
\quad 
E\ge E_{\beta,\delta}.
\label{LTwoFsum3B}
\end{equation}
Furthermore, since 
$\diam(\supp(\smash{\widehat\zeta_b}))\le \delta/2\le d/6$, 
we have
\begin{equation}
\zeta_q^e(x_0)=1,
\quad
\zeta_q^e(x_*)=1,
\quad
\forall x_q^e\in\Gamma^e,
\;
e\not\in I_{x_0,d},
\quad 
E\ge E_{\beta,\delta}.
\label{LTwoFsum3C}
\end{equation}
Combining (\ref{LTwoFsum3C}) with (\ref{LTwoFsum3}), 
and using the fact that $H(x,y)$ is Lipschitz on the
set $|y-x|\ge 39d/60$ by (A2), and the fact that 
$0<|x_0-x_*|\le\delta$, we get
\begin{equation}
\begin{split}
|\esF_3(x_0,x_*)| 
&\le \sum_{e\not\in I_{x_0,d}} 
      \sum_{q=1}^Q |\zeta_q^e(x_0)H(x_0,x_q^e)W_q^e 
                  - \zeta_q^e(x_*)H(x_*,x_q^e)W_q^e|, \\
&\le \sum_{e\not\in I_{x_0,d}} 
      \sum_{q=1}^Q |H(x_0,x_q^e) - H(x_*,x_q^e)| W_q^e, \\
&\le \sum_{e\not\in I_{x_0,d}} 
      \sum_{q=1}^Q C|x_0 - x_*| W_q^e, \\ 
&\le  C|\Gamma| |x_0 - x_*| \le C\delta.
\end{split}
\label{LTwoFsum3D} 
\end{equation}
Thus, for every $\veps>0$, we can choose $\delta\in(0,d/3]$ 
sufficiently small to get the uniform bound
\begin{equation}
|\esF_3(x_0,x_*)| \le \veps,
\quad 
\forall E\ge E_{\beta,\delta},  
\quad 
x_0,x_*\in\Gamma,
\quad 
0<|x_0-x_*|\le\delta.
\label{LTwoFsum3E} 
\end{equation}
The desired result stated in part (iii) follows from 
(\ref{LTwoFsum3E}), (\ref{LTwoFsum2F}), (\ref{LTwoFsum1A}) 
and (\ref{LTwoDecompFsum}).
\end{proof}

Our next result establishes some important properties
of the local polynomial $R_x(z)$ of degree $p\ge 0$
with coefficients $\{C_{x,0},C_{x,\alpha_1},\ldots,
C_{x,\alpha_1\alpha_2\cdots\alpha_p}\}$ defined
in (\ref{LocPolyDefn})--(\ref{LocMomCondCoeffThree}).

\begin{lemma}
\label{LemmaThree}
Let $\Gamma$ satisfy {\rm (A0)} with Lyapunov
radius $d>0$, $H$ satisfy {\rm (A2)} with
exponent $0 < \mu \le 1$, and
$\{\smash{\Gamma^e}, \smash{x_q^e}, 
\smash{W_q^e}, \smash{\zeta_q^e}, 
\smash{\widehat\zeta_q^e}\}$
satisfy {\rm (A4)}--{\rm (A8)}.  Then:
\begin{itemize}
\item[{\rm (i)}] For each $x\in\Gamma$ and $n\ge n_0$, 
the local polynomial $R_x:\Gamma_{x,d}\to\Rkk$ is 
invariant to the choice of orthonormal basis in 
$T_x\Gamma$.
\item[{\rm (ii)}] $R_x(z)$ is continuous in $x\in\Gamma$
and $z\in\Gamma_{x,d'}$ for each $n\ge n_0$ and 
$d'\in(0,d)$.
\item[{\rm (iii)}] For every $\veps>0$ there exists $N>0$
such that $|R_x(z)|\le\veps$ for all $x\in\Gamma$, 
$z\in\Gamma_{x,d}$ and $n\ge N$.
\end{itemize}
\end{lemma}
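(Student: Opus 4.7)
For part~(i), I argue by uniqueness. A change of orthonormal basis in $T_x\Gamma$ is an orthogonal transformation of the coordinates $(\xi_{x,1},\xi_{x,2})$, which bijectively preserves the space $\mathcal{P}_p$ of scalar polynomials of degree at most $p$ in those coordinates. The moment conditions (\ref{LocMomCond}) require $(\esH_h\eta_x f_x)(x)=(\esH\eta_x f_x)(x)$ for every $f_x\in\mathcal{P}_p$, a manifestly basis-free statement. By (A8) the induced linear system on the $k\times k$-valued coefficients of $R_x$ has a unique solution, so the polynomial $R_x\colon\Gamma_{x,d}\to\Rkk$, viewed as a function rather than as a tuple of coefficients, is independent of the basis used.

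For part~(ii), I fix $n\ge n_0$ and establish continuity of $x\mapsto R_x$ by showing continuity of both $M_x$ and $\Delta_x$ in $x$. The entries of $M_x$ in (\ref{LocMomCondCoeffTwo}) are finite sums of continuous functions of $x$, provided a continuous local frame $\{t_{x,1},t_{x,2}\}$ on $T_x\Gamma$ is used; such a frame exists locally by the $C^{1,1}$ regularity of $\Gamma$, and by part~(i) the local choice does not affect $R_x$. Continuity of $\Delta_x$ in (\ref{LocMomCondCoeffThree}) reduces to continuity in $x$ of the discrete sum and of the integral, now applied to the modified kernel $H(x,y)\eta_x(y)\xi_{x,\beta_1}(y)\cdots\xi_{x,\beta_t}(y)$; the additional polynomial and cutoff factors are bounded and Lipschitz on $\supp\eta_x$, so the arguments that establish Lemma~\ref{LemmaOne}(ii) and Lemma~\ref{LemmaTwo}(iii) carry through with only cosmetic changes. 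Then (A8) yields continuity of $C_x=M_x^{-1}\Delta_x$, and since $\xi_{x,\alpha}(z)=(z-x)\cdot t_{x,\alpha}$ is jointly continuous in $(x,z)$ for $z\in\Gamma_{x,d'}$ with $d'<d$, the polynomial $R_x(z)$ is jointly continuous.

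Part~(iii) is the principal obstacle. By (A8) it suffices to show $|\Delta_x|\to 0$ uniformly in $x$ as $n\to\infty$, since the products of $\xi_{x,\alpha}(z)$ of degree at most $p$ are uniformly bounded on $\Gamma_{x,d}$. Writing $\zeta_b=1-\widehat\zeta_b$ and $g_x(y)=\eta_x(y)\xi_{x,\beta_1}(y)\cdots\xi_{x,\beta_t}(y)$, I decompose
\[
\Delta_x = \Big[\int_\Gamma H(x,y)g_x(y)\,dA_y - \sum_b H(x,x_b)g_x(x_b)W_b\Big]
+ \sum_b \widehat\zeta_b(x)H(x,x_b)g_x(x_b)W_b,
\]
and split each piece by the distance $|y-x|$ or $|x_b-x|$ into a near field of radius $\beta h$ and a complementary far field, following the template of Lemma~\ref{LemmaTwo}. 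The three near-field contributions are each $O(h^\mu)$ by exactly the reasoning used to bound $\esS_1$ in Lemma~\ref{LemmaTwo}(ii), invoking $|H|\le C|y-x|^{\mu-2}$, the area bound $|\Gamma_{x,\beta h}|\le Ch^2$, and $|\widehat\zeta_b|\le 1$; moreover, once $h$ is small enough, $\widehat\zeta_b(x)\ne 0$ forces $|x_b-x|\le Ch$, so the second sum is entirely near-field. What remains is the standard quadrature error in the first bracket restricted to the far field, where the integrand $y\mapsto H(x,y)g_x(y)$ is continuous and uniformly bounded by (A0)--(A2), and is driven to zero by (A6). The main difficulty will be extracting uniformity in $x$ from (A6), which is stated pointwise in $f$; this is to be overcome using the uniform Lyapunov radius $d$ and the uniform support of $\eta_x$ within a $d/2$-neighbourhood of $x$, which render the far-field integrand family equicontinuous, so that for every $\veps>0$ a common threshold $N$ can be chosen with $|R_x(z)|\le\veps$ for all $x\in\Gamma$, $z\in\Gamma_{x,d}$ and $n\ge N$.
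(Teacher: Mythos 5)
Your arguments for parts (i) and (ii) are sound and follow essentially the same route as the paper: in (i) the paper verifies the explicit tensorial transformation law for the coefficients under an orthogonal change of frame, whereas you argue directly from the basis-free form of the moment conditions plus uniqueness, but these are two framings of the same idea; in (ii) the paper constructs a specific continuous local frame via a fixed reference vector and the normal, whereas you simply posit one, but the substance is the same.

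Part (iii) has two genuine gaps. First, after splitting $\zeta_b=1-\widehat\zeta_b$ you propose to bound the near-field sum of the first bracket and the correction sum $\sum_b\widehat\zeta_b(x)H(x,x_b)g_x(x_b)W_b$ separately as $O(h^\mu)$ ``by exactly the reasoning used to bound $\esS_1$.'' But $\esS_1$ in Lemma~\ref{LemmaTwo}(ii) retains the factor $\zeta_b(x)$, and the estimate there leans on its quadratic vanishing $|\zeta_b(x)|\le C|x-x_b|^2/h^2$ from (A7); once you strip that factor off, a point $x$ (an arbitrary surface point, not a node) lying within $o(h)$ of some $x_{b_0}$ produces in each of your two pieces a term of size $\sim h^2/|x-x_{b_0}|^{2-\mu}$, which is unbounded as $x\to x_{b_0}$. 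Only the combination $\zeta_{b_0}(x)H(x,x_{b_0})W_{b_0}$ is controlled; the paper's $\esD_1$ estimate keeps $\zeta_b$ attached to $H$ for exactly this reason. Second, a single near/far split at radius $\beta h$ leaves a far field whose regularity collapses as $h\to 0$: on $|y-x|\ge\beta h$ the integrand $H(x,y)g_x(y)$ has sup-norm of order $(\beta h)^{\mu-2}$ (and worse bounds on derivatives), so the constant in (A6) -- which is allowed to depend on $f$ -- blows up and no rate or limit survives. Your appeal to equicontinuity fails for the same reason: as $h\to 0$ the far field expands into the singular neighborhood of the diagonal, and the uniform support of $\eta_x$ in $\Gamma_{x,d/2}$ is irrelevant since $\eta_x\equiv 1$ near $x$. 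The paper resolves both issues with a two-level cutoff: a fixed small $\delta$ is introduced, the patch $\Gamma_{x_0,\delta}$ is estimated directly (integral and $\zeta_b$-weighted sum bounded separately, not as a quadrature error, giving $O(\delta^\mu)$ as in $\esD_1$), and (A6) is applied only on $|y-x|\ge 19\delta/20$, where $H(x,y)\eta_x(y)\cdots$ is $C^{m,1}$ with $\delta$-dependent but $h$-independent norms, giving $O(h^{\min(\ell,m+1)})$. The $\veps$--$N$ statement then follows by first shrinking $\delta$ and then taking $N$ large.
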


\begin{proof}
For part (i), let $x\in\Gamma$ and $n\ge n_0$ be arbitrary, 
and let $\xi_x(z)$ and ${\widetilde\xi}_x(z)$ be local 
Cartesian coordinates in $\Gamma_{x,d}$ relative to two 
orthonormal bases in $T_x\Gamma$.  Then there exists
an orthogonal matrix $L_x\in\Reals^{2\times 2}$ such
that 
\begin{equation}
{\widetilde\xi}_{x,\alpha}(z) 
          = L_{x,\alpha\beta}\xi_{x,\beta}(z),
\quad
\forall z\in\Gamma_{x,d}. 
\label{LemmaThree1}
\end{equation}
Moreover, let 
$\{C_{x,0},C_{x,\alpha_1},\ldots,
C_{x,\alpha_1\alpha_2\cdots\alpha_p}\}$ and
$\{{\widetilde C}_{x,0},{\widetilde C}_{x,\alpha_1},
\ldots,{\widetilde C}_{x,\alpha_1\alpha_2\cdots\alpha_p}\}$ 
denote polynomial coefficients found using the two bases.  
From (\ref{LocMomCondMat})--(\ref{LocMomCondCoeffThree}) 
and (\ref{LemmaThree1}), and the uniqueness of solutions 
of (\ref{LocMomCondMat}) guaranteed by (A8), and the fact 
that $L_x^{-1}=L_x^T$, we deduce 
\begin{equation}
C_{x,0}={\widetilde C}_{x,0},
\quad
C_{x,\alpha_1\cdots\alpha_s}
=
L_{x,\beta_1\alpha_1}\cdots L_{x,\beta_s\alpha_s}
       {\widetilde C}_{x,\beta_1\cdots\beta_s},
\quad
s=1,\ldots,p.
\label{LemmaThree2}
\end{equation}
Combining (\ref{LemmaThree2}) and (\ref{LemmaThree1})
with (\ref{LocPolyDefn}), we find that the value of
$R_x(z)$ is independent of the basis.

For part (ii), let $n\ge n_0$, $d'\in(0,d)$, 
$x_0\in\Gamma$ and $z_0\in\Gamma_{x_0,d'}$ be 
arbitrary, and consider any $0<\delta<(d-d')/2$.
Then, by the triangle inequality, we find
\begin{equation}
z\in\Gamma_{x,d},
\quad
\forall x\in\Gamma_{x_0,\delta},
\quad
\forall z\in\Gamma_{z_0,\delta}.
\end{equation}
Moreover, let $u_0\in T_{x_0}\Gamma$ be any given 
unit vector, let $\nu(x)$ denote the outward unit 
normal to $\Gamma$ at $x$, and consider a basis 
for $T_x\Gamma$ defined by
\begin{equation}
t_1(x) = {u_0-(u_0\cdot\nu(x))\nu(x)\over
           |u_0-(u_0\cdot\nu(x))\nu(x)|},
\quad
t_2(x) = \nu(x) \times t_1(x).
\end{equation}
Then, for any $x\in\Gamma_{x_0,\delta}$ and
$z\in\Gamma_{z_0,\delta}$, the local Cartesian 
coordinates for $z\in\Gamma_{x,d}$ are given by
\begin{equation}
\xi_{x,\alpha}(z) = (z-x)\cdot t_\alpha(x).
\end{equation}
From the fact that these coordinates depend 
continuously on $x\in\Gamma_{x_0,\delta}$ and 
$z\in\Gamma_{z_0,\delta}$, together with 
Lemmas \ref{LemmaOne} and \ref{LemmaTwo}, 
we deduce that the coefficient matrix and data 
vector for the linear system in (\ref{LocMomCondMat}) 
depend continuously on $x\in\Gamma_{x_0,\delta}$.
Furthermore, by (A8), the unique solution 
$\{C_{x,0},C_{x,\alpha_1},\ldots,
C_{x,\alpha_1\alpha_2\cdots\alpha_p}\}$ 
of the system also depends continuously 
on $x\in\Gamma_{x_0,\delta}$.  Combining these 
results with (\ref{LocPolyDefn}), we find that 
$R_x(z)$ depends continuously on 
$x\in\Gamma_{x_0,\delta}$ and
$z\in\Gamma_{z_0,\delta}$ as required.

For part (iii), we notice that, by (A8) and the
fact that $|\xi_x(z)|\le d$ for all $x\in\Gamma$
and $z\in\Gamma_{x,d}$, it will be sufficient 
to show that, for every $\veps>0$, there exists 
an $N>0$ such that
\begin{equation}
|\Delta_{x}^{0}|\le \veps,
\quad 
|\Delta_{x}^{\beta_1\cdots\beta_s}|\le\veps,
\quad
\forall n\ge N,
\quad
x\in\Gamma,
\quad
s=1,\ldots,p. 
\end{equation}
For brevity, we establish the above bound for
$\smash{\Delta_{x}^{0}}$ only, and note that a bound for 
$\smash{\Delta_{x}^{\beta_1\cdots\beta_s}}$ follows by a 
similar argument.  Using the same notation as in the proof 
of Lemma \ref{LemmaTwo}, let $\delta\in(0,d/3]$, $\beta\ge 10$ 
and $E_{\beta,\delta}\ge E_0$ be given numbers, where
$E_{\beta,\delta}$ is sufficiently large such that
$0<\beta h \le \delta/2$ and 
$\diam(\supp(\smash{\widehat\zeta_b}))\le \delta/2$ 
for all $b=1,\ldots,n$ and $E\ge E_{\beta,\delta}$.
Moreover, for arbitrary $x=x_0\in\Gamma$, we consider 
as before the collections of surface elements 
$\Gamma^e$, $e=1,\ldots,E$, denoted by $I_{x_0,\beta h}$ 
and $I_{x_0,\delta}$.  From (\ref{LocMomCondCoeffOne}), 
we consider the decomposition
\begin{align}
\Delta_{x_0}^{0} 
&= \int_{\Gamma} H(x_0,y)\eta_{x_0}(y)\;dA_y
   - \sum_{b=1}^{n} \zeta_b(x_0)H(x_0,x_b)\eta_{x_0}(x_b)W_b, \\ 
&= \sum_{e=1}^{E}\left[\int_{\Gamma^e} H(x_0,y)\eta_{x_0}(y)\;dA_y
   - \sum_{q=1}^{Q} 
      \zeta_q^e(x_0)H(x_0,x_q^e)\eta_{x_0}(x_q^e)W_q^e\right], \\ 
&= \esD_1(x_0) + \esD_2(x_0) 
                                     \vphantom{\sum_{e=1}^{E}},
\label{LThreeDecomp} 
\end{align}
where 
\begin{align}
\esD_1(x_0) 
&= \sum_{e\in I_{x_0,\delta}} \left[
     \int_{\Gamma^e} H(x_0,y)\eta_{x_0}(y)\;dA_y
       - \sum_{q=1}^Q \zeta_q^e(x_0)H(x_0,x_q^e)\eta_{x_0}(x_q^e)W_q^e
                        \right], 
\label{LThreeSum1} \\
\esD_2(x_0) 
&= \sum_{e\not\in I_{x_0,\delta}} \left[
     \int_{\Gamma^e} H(x_0,y)\eta_{x_0}(y)\;dA_y
       - \sum_{q=1}^Q \zeta_q^e(x_0)H(x_0,x_q^e)\eta_{x_0}(x_q^e)W_q^e
                        \right].
\label{LThreeSum2} 
\end{align}
To establish the result, we show that, for every $\veps>0$, 
we can choose $E_{\beta,\delta}\ge E_0$ sufficiently large
such that $|\esD_1|\le \veps$ and $|\esD_2|\le\veps$ for all 
$E\ge E_{\beta,\delta}$ and $x_0\in\Gamma$.  

For the term $\esD_1$ in (\ref{LThreeDecomp}), we use 
the fact that $|\eta_{x_0}|\le 1$ to obtain
\begin{equation}
|\esD_1(x_0)| 
\le \sum_{e\in I_{x_0,\delta}} \left[ 
   \int_{\Gamma^e} |H(x_0,y)|\;dA_y 
    +  \sum_{q=1}^Q |\zeta_q^e(x_0)H(x_0,x_q^e)W_q^e|\right].  
\label{LThreeSum1A} 
\end{equation}
Working with the first sum in (\ref{LThreeSum1A}), we
have, using (A2),
\begin{equation}
\begin{split}
\sum_{e\in I_{x_0,\delta}}\int_{\Gamma^e} |H(x_0,y)|\;dA_y 
\le \int_{\Gamma_{x_0,\delta}} {C\over|y-x_0|^{2-\mu}}\;dA_y 
\le C\delta^\mu, 
\quad
\forall E\ge E_{\beta,\delta},
\end{split}
\label{LThreeSum1B} 
\end{equation}
where $0<\mu\le 1$ and the last inequality follows from a 
direct estimate of the integral using polar coordinates and 
the fact that the local coordinate maps $\psi_{x_0}$ and 
$\psi_{x_0}^{-1}$ are Lipschitz.  Working with the second 
sum in (\ref{LThreeSum1A}), we have, using the inclusion 
$I_{x_0,\beta h}\subset I_{x_0,\delta}$,
\begin{equation}
\begin{split}
&\sum_{e\in I_{x_0,\delta}} 
   \sum_{q=1}^Q |\zeta_q^e(x_0)H(x_0,x_q^e)W_q^e| \\
&\hskip0.5in
    = \Big(\sum_{e\in I_{x_0,\beta h}} 
    + \sum_{e\in I_{x_0,\delta}\backslash I_{x_0,\beta h}}\Big)
      \sum_{q=1}^Q |\zeta_q^e(x_0)H(x_0,x_q^e)W_q^e|.  
\end{split}
\label{LThreeSum1C} 
\end{equation}
By the same arguments used in the proof of Lemma
\ref{LemmaTwo}(ii) that yielded (\ref{LTwoSum1C}) 
and (\ref{LTwoSum2F}), we find
\begin{equation}
\begin{split}
\sum_{e\in I_{x_0,\delta}} 
 \sum_{q=1}^Q |\zeta_q^e(x_0)H(x_0,x_q^e)W_q^e| 
\le (Ch^\mu + C\delta^\mu)
\le C\delta^\mu,
\quad
\forall E\ge E_{\beta,\delta},
\end{split}
\label{LThreeSum1D} 
\end{equation}
where the last inequality follows from the fact 
that $h\le \delta/(2\beta)$.  Thus, in view of 
(\ref{LThreeSum1D}), (\ref{LThreeSum1B}) and 
(\ref{LThreeSum1A}), for every $\veps>0$, we can 
choose a $\delta\in(0,d/3]$ sufficiently small 
to get the uniform bound
\begin{equation}
|\esD_1(x_0)| \le \veps,
\quad 
\forall E\ge E_{\beta,\delta},  
\quad 
x_0\in\Gamma.
\label{LThreeSum1E} 
\end{equation}

For the term $\esD_2$ in (\ref{LThreeDecomp}), we
notice that $\Gamma_{x_0,\delta-h}\subset\Gamma_{x_0,\delta}$
and moreover that $\dist(\partial \Gamma_{x_0,\delta-h},
\partial\Gamma_{x_0,\delta})\ge h\ge\diam(\Gamma^e)$.
Hence, if $\Gamma^e\cap\Gamma_{x_0,\delta-h}\ne\emptyset$,
then $\Gamma^e\subset \Gamma_{x_0,\delta}$.
From this we deduce that 
$\Gamma^e\cap\Gamma_{x_0,\delta-h}=\emptyset$  for all 
$e\not\in I_{x_0,\delta}$.  This implies, using the 
fact that $h\le\delta/(2\beta)$,
\begin{equation}
{19 \delta\over 20} \le \delta-h < |y-x_0|, 
\quad
\forall y\in\Gamma^e,
\quad
e\not\in I_{x_0,\delta},
\quad 
E\ge E_{\beta,\delta}.
\label{LThreeSum2A}
\end{equation}
Furthermore, since 
$\diam(\supp(\smash{\widehat\zeta_b}))\le \delta/2$, 
we have
\begin{equation}
\zeta_q^e(x_0)=1,
\quad
\forall x_q^e\in\Gamma^e,
\;
e\not\in I_{x_0,\delta},
\quad 
E\ge E_{\beta,\delta}.
\label{LThreeSum2B}
\end{equation}
Combining (\ref{LThreeSum2B}) with (\ref{LThreeSum2}), 
and using the fact that $H(x,y)$ is class $C^{m,1}$ on 
the set $|y-x|\ge 19\delta/20$ by (A2), and that 
$\eta_x(y)$ is class $C^{m,1}$ for all $x$ and $y$, 
together with the quadrature error bound in (A6) for a
rule of order $\ell$, we get
\begin{align}
|\esD_2(x_0)| 
&\le \sum_{e\not\in I_{x_0,\delta}} \left|
     \int_{\Gamma^e} H(x_0,y)\eta_{x_0}(y)\;dA_y
       - \sum_{q=1}^Q H(x_0,x_q^e)\eta_{x_0}(x_q^e)W_q^e
                        \right| \\
&\le \sum_{e\not\in I_{x_0,\delta}} 
                C_\delta|\Gamma^e| h^{\min(\ell,m+1)}
\le C_\delta|\Gamma| h^{\min(\ell,m+1)},
\quad
\forall E\ge E_{\beta,\delta}.
\label{LThreeSum2C} 
\end{align}
Here $C_\delta$ is a Lipschitz constant for the derivatives
of the function $H(x,y)\eta_{x}(y)$ on the set 
$|y-x|\ge 19\delta/20$, and $\delta\in(0,d/3]$ is fixed such 
that (\ref{LThreeSum1E}) holds.  Thus, for any given $\veps>0$, 
we can choose $E_{\beta,\delta}$ sufficiently large 
(equivalently $h$ sufficiently small) to get the uniform bound
\begin{equation}
|\esD_2(x_0)| \le \veps,
\quad 
\forall E\ge E_{\beta,\delta},  
\quad 
x_0\in\Gamma.
\label{LThreeSum2D} 
\end{equation}
The desired result for $\smash{\Delta_{x_0}^{0}}$ follows
from (\ref{LThreeSum2D}), (\ref{LThreeSum1E}) and
(\ref{LThreeDecomp}).
\end{proof}

The next result shows that the linear operators 
$\esA$ and $\esA_n$ ($n\ge n_0$) defined in 
(\ref{DefAop}) and (\ref{IntEqnApprox}) satisfy 
the collective compactness conditions (C1)--(C3).

\begin{lemma}
\label{LemmaFour}
Let $\Gamma$ satisfy {\rm (A0)} with Lyapunov radius $d>0$,
$G$ satisfy {\rm (A1)}, 
$H$ satisfy {\rm (A2)} with exponent $0<\mu\le 1$,
and
$\{\smash{\Gamma^e}, \smash{x_q^e}, 
\smash{W_q^e}, \smash{\zeta_q^e}, 
\smash{\widehat\zeta_q^e}\}$
satisfy {\rm (A4)}--{\rm (A8)}.  
Then:
\begin{itemize}
\item[{\rm (i)}] $\esA$ is a compact operator 
on $C^0(\Gamma,\Rk)$.
\item[{\rm (ii)}] $\esA_n$ is a finite-rank
(hence compact) operator on $C^0(\Gamma,\Rk)$ for 
each $n\ge n_0$.
\item[{\rm (iii)}] $\esA_n$ satisfies 
{\rm (C1)}--{\rm (C3)} and hence is collectively 
compact on $C^0(\Gamma,\Rk)$.
\end{itemize}
\end{lemma}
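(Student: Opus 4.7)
The plan is to treat parts (i) and (ii) as direct consequences of the preceding lemmas, and reduce part (iii) to verification of (C1)--(C3) on $\esH_n$ using the moment identity as the central device. For (i), $\esG$ is compact on $C^0(\Gamma,\Rk)$ by the classical Arzel\`a--Ascoli argument for integral operators with continuous kernel, while compactness of $\esH$ follows from Lemma~\ref{LemmaOne}: parts (i) and (ii) respectively give equiboundedness and equicontinuity of $\{\esH v:\|v\|\le 1\}$. For (ii), the range of $\esA_n$ lies in the span of $\{G_b,H_b\}_{b=1}^n$ and thus has dimension at most $2n$; continuity of $G_b(x)=G(x,x_b)W_b$ and $H_b(x)=\zeta_b(x)H(x,x_b)W_b+\widehat\zeta_b(x)R_x(x_b)$ on $\Gamma$ follows from continuity of $G$, Lemma~\ref{LemmaTwo}(i), and Lemma~\ref{LemmaThree}(ii).

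For part (iii), the analysis of $\esG_n$ is routine from continuity of $G$ together with (A6), so I concentrate on $\esH_n$ and use throughout the decomposition
\begin{equation*}
(\esH_n v)(x) = S_n^{\mathrm{q}}(x,v) + S_n^{\mathrm{c}}(x,v),
\end{equation*}
where $S_n^{\mathrm{q}}(x,v)=\sum_b \zeta_b(x)H(x,x_b)W_b v(x_b)$ and $S_n^{\mathrm{c}}(x,v)=\sum_b \widehat\zeta_b(x) R_x(x_b) v(x_b)$. Condition (C2) then follows because Lemma~\ref{LemmaTwo}(ii) yields $|S_n^{\mathrm{q}}|\le C\|v\|$ uniformly in $x,n$, while Lemma~\ref{LemmaThree}(iii), combined with the bounded overlap of $\{\mathrm{supp}(\widehat\zeta_b)\}$ implied by (A5) and (A7), yields $|S_n^{\mathrm{c}}|\le C\|v\|$. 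For (C3), Lemma~\ref{LemmaTwo}(iii) supplies a uniform-in-$n$ modulus of continuity for $S_n^{\mathrm{q}}$, and Lemma~\ref{LemmaThree}(iii) makes $|S_n^{\mathrm{c}}|$ arbitrarily small uniformly in $x$ and $v$ for $n$ large, rendering its equicontinuity automatic.

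The crux is (C1): for fixed $v\in C^0(\Gamma,\Rk)$, I must show $(\esH_n v)(x)\to(\esH v)(x)$ uniformly in $x$. The key device is the degree-$0$ moment identity
\begin{equation*}
(\esH_n \eta_x)(x) = (\esH \eta_x)(x),
\end{equation*}
obtained by taking $f_x\equiv 1$ in (\ref{LocMomCond}), which is available for every $p\ge 0$. Writing $v = (v-v(x))\eta_x + v(x)\eta_x + v(1-\eta_x)$ and subtracting, the moment identity annihilates the middle contribution, leaving
\begin{equation*}
(\esH_n - \esH)v\,(x) = (\esH_n - \esH)\!\left[(v-v(x))\eta_x\right]\!(x) + (\esH_n - \esH)\!\left[v(1-\eta_x)\right]\!(x).
\end{equation*}
The second term has a continuous integrand (since $1-\eta_x$ vanishes on a fixed neighborhood of $x$, killing the singularity of $H$); moreover, for $h$ small, $S_n^{\mathrm{c}}$ contributes nothing here, because $\mathrm{supp}(\widehat\zeta_b)\subset\Gamma_{x,d/4}$ where $\eta_x\equiv 1$. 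Hence this term reduces to a standard Nystr\"om error that vanishes uniformly in $x$ via a uniform-in-$x$ version of (A6) applied to the equicontinuous family $\{H(x,\cdot)v(\cdot)(1-\eta_x(\cdot))\}_{x\in\Gamma}$. The first term is handled by introducing a secondary cutoff at a small scale $\delta>0$ and splitting $(v-v(x))\eta_x$ into an inner part of magnitude at most $\omega_v(\delta)$, the modulus of continuity of $v$, and an outer part with continuous integrand; the inner part contributes $O(\omega_v(\delta))$ uniformly by Lemmas~\ref{LemmaOne}(i) and~\ref{LemmaTwo}(ii), and the outer part is treated as before. Taking $\delta$ first and then $n$ large yields the claim, after which Theorem~\ref{thmConv} completes the proof.

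The principal obstacle will be the uniform-in-$x$ control of the singular piece in (C1): a naive bound of the integrand against the near-diagonal measure diverges, so the whole point of the local polynomial correction --- and of (A8) guaranteeing solvability of the moment system --- is to neutralize the leading singularity via the moment identity, after which the remainder is a modulus-of-continuity argument combined with quadrature estimates for equicontinuous families of continuous integrands.
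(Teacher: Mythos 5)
Your proof is essentially correct and its verification of parts (i), (ii), (C2), and (C3) matches the paper's route: $\esG$ by classical continuity, $\esH$ by Lemma~\ref{LemmaOne}, finite rank and continuity of $H_b$ by Lemmas~\ref{LemmaTwo}(i) and \ref{LemmaThree}(ii), equiboundedness via Lemma~\ref{LemmaTwo}(ii) and Lemma~\ref{LemmaThree}(iii) with the bounded cardinality of $J_x$, and equicontinuity via Lemma~\ref{LemmaTwo}(iii) together with the uniform smallness of the correction sum.

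For (C1) you take a genuinely different route than the paper. The paper's argument is purely a direct region-wise estimate: it decomposes $(\esH v - \esH_n v)(x_0)$ into $\esK_1 + \esK_2 - \esK_3$, where $\esK_1$ is the quadrature error from elements inside $\Gamma_{x_0,\delta}$ (bounded by $C\|v\|\delta^\mu$ via the machinery of Lemma~\ref{LemmaTwo}), $\esK_2$ is the far-field quadrature error (vanishing uniformly by (A6)), and $\esK_3$ is the correction sum $\sum_b \widehat\zeta_b(x_0)R_{x_0}(x_b)v(x_b)$ (vanishing uniformly by Lemma~\ref{LemmaThree}(iii) and $|J_{x_0}|\le C$). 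The moment identity never appears explicitly in the paper's proof of (C1) --- it is used later, only in establishing the convergence \emph{rates} in parts (ii) and (iii) of Theorem~\ref{ConvThm}. Your argument instead imports the moment-identity cancellation at the (C1) stage: you split $v=(v-v(x))\eta_x + v(x)\eta_x + v(1-\eta_x)$, use $(\esH_n\eta_x)(x)=(\esH\eta_x)(x)$ to annihilate the middle term, and reduce the singular piece to a modulus-of-continuity bound. Both are valid; the paper's is slightly leaner since Lemma~\ref{LemmaThree}(iii) already makes $\esK_3$ vanish outright, whereas your approach previews the structure needed for the rate proof. Conceptually, your route makes more transparent \emph{why} the local correction works (via the moment conditions), while the paper's simply bounds the correction sum as a separate term.

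One small omission in your sketch: when bounding the inner piece of $(\esH_n-\esH)[(v-v(x))\eta_x](x)$ by $O(\omega_v(\delta))$, you cite only Lemmas~\ref{LemmaOne}(i) and \ref{LemmaTwo}(ii), which control the continuous integral and the $\zeta_b$-weighted discrete sum. You also need to account for the contribution of $S_n^{\rm c}$ restricted to this inner piece, namely $\sum_b \widehat\zeta_b(x)R_x(x_b)(v(x_b)-v(x))\eta_x(x_b)$; for $h<\delta$ the contributing nodes satisfy $|x_b-x|\le Ch$, so this is bounded by $\omega_v(Ch)\cdot|J_x|\cdot\sup_b|R_x(x_b)|$, which requires Lemma~\ref{LemmaThree}(iii) (or at least the boundedness of $R_x$) and the bounded cardinality of $J_x$. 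Similarly, the step invoking ``a uniform-in-$x$ version of (A6) applied to the equicontinuous family'' deserves a sentence: as in the paper (cf.\ the argument following (\ref{LFourSum2A})), uniformity in $x_0$ follows because $H(x_0,\cdot)v(\cdot)(1-\eta_{x_0}(\cdot))$ has continuity moduli uniform in $x_0$ by (A2) on $|y-x_0|\ge c\delta$, together with an $\veps$-net or Arzel\`a--Ascoli argument to upgrade the pointwise-in-$f$ statement (A6) to one uniform over the family. Neither issue is fatal; both are routine completions of your outline.
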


\begin{proof}
The result for $\esA=\esG+\esH$ in part (i) is classic and 
follows from the continuity of $G$ and the properties of $H$
established in Lemma \ref{LemmaOne}, see for example 
\cite{Kress:1989,Mikhlin:1960}.  
The result for $\esA_n=\esG_n+\esH_n$ in part (ii) is 
analogous and relies on Lemma \ref{LemmaTwo}.  For brevity, 
we illustrate only (iii).  Moreover, we show the result 
only for $\esH_n$.  The result for $\esG_n$ is similar
and more straightforward due to the continuity of $G$.

To establish (C1), let $v\in C^0(\Gamma,\Rk)$ be arbitrary.  
We seek to show that $|(\esH v)(x_0) - (\esH_n v)(x_0)|\to 0$ 
as $n\to\infty$ uniformly in $x_0\in\Gamma$, where
\begin{equation}
\begin{split}
&(\esH v)(x_0) - (\esH_n v)(x_0) \\
&\quad 
  = \int_{\Gamma} H(x_0,y)v(y)\;dA_y
           - \sum_{b=1}^{n} \zeta_b(x_0)H(x_0,x_b)v(x_b)W_b \\
&\hskip2.5in
  - \sum_{b=1}^{n} \widehat\zeta_b(x_0)R_{x_0}(x_b)v(x_b). 
\end{split}
\end{equation}
Using the same notation as in the proof of Lemma 
\ref{LemmaTwo}, let $\delta\in(0,d/3]$, $\beta\ge 10$ 
and $E_{\beta,\delta}\ge E_0$ be given numbers, where 
$E_{\beta,\delta}$ is sufficiently large such that
$0<\beta h \le \delta/2$ and
$\diam(\supp(\smash{\widehat\zeta_b}))\le \delta/2$ for 
all $b=1,\ldots,n$ and $E\ge E_{\beta,\delta}$.  Moreover, 
for any given $x_0\in\Gamma$, consider as before the 
collections of surface elements $\Gamma^e$, $e=1,\ldots,E$, 
denoted by $I_{x_0,\beta h}$ and $I_{x_0,\delta}$, and 
consider the decomposition
\begin{equation}
(\esH v)(x_0) - (\esH_n v)(x_0)
                 = \esK_1(x_0) + \esK_2(x_0) - \esK_3(x_0),
\label{LFourDecomp} 
\end{equation}
where
\begin{align}
\esK_1(x_0) 
&= \sum_{e\in I_{x_0,\delta}} \left[
     \int_{\Gamma^e} H(x_0,y) v(y)\;dA_y
       - \sum_{q=1}^Q \zeta_q^e(x_0)H(x_0,x_q^e) v(x_q^e)W_q^e
                        \right], 
\label{LFourSum1} \\
\esK_2(x_0) 
&= \sum_{e\not\in I_{x_0,\delta}} \left[
     \int_{\Gamma^e} H(x_0,y) v(y)\;dA_y
       - \sum_{q=1}^Q \zeta_q^e(x_0)H(x_0,x_q^e) v(x_q^e)W_q^e
                        \right], 
\label{LFourSum2} \\
\esK_3(x_0) 
&= \sum_{b=1}^{n}
      \widehat\zeta_b(x_0)
                R_{x_0}(x_b) v(x_b).  
\label{LFourSum3} 
\end{align}
To establish the result, we show that, for every $\veps>0$,
we can choose $E_{\beta,\delta}\ge E_0$ sufficiently large
such that $|\esK_1|\le \veps$, $|\esK_2|\le \veps$ and 
$|\esK_3|\le\veps$ for all $E\ge E_{\beta,\delta}$ and 
$x_0\in\Gamma$.

For the term $\esK_1$ in (\ref{LFourDecomp}), the same
arguments used to establish 
(\ref{LThreeSum1A})--(\ref{LThreeSum1D}) in the proof
of Lemma \ref{LemmaThree} can be applied to obtain
\begin{equation}
\begin{split}
|\esK_1(x_0)| 
&\le \sum_{e\in I_{x_0,\delta}} ||v||\left[ 
   \int_{\Gamma^e}  |H(x_0,y)|\;dA_y 
    +  \sum_{q=1}^Q  |\zeta_q^e(x_0)H(x_0,x_q^e)W_q^e|\right],  \\ 
& \le C ||v|| \delta^\mu,
\quad
\forall E\ge E_{\beta,\delta}.
\end{split}
\label{LFourSum1A} 
\end{equation}
Thus, for every $\veps>0$, we can choose a $\delta\in(0,d/3]$ 
sufficiently small to get the uniform bound
\begin{equation}
|\esK_1(x_0)| \le \veps,
\quad 
\forall E\ge E_{\beta,\delta},  
\quad 
x_0\in\Gamma.
\label{LFourSum1B} 
\end{equation}

For the term $\esK_2$ in (\ref{LFourDecomp}), the same
arguments used to establish 
(\ref{LThreeSum2A})--(\ref{LThreeSum2C}) in the proof
of Lemma \ref{LemmaThree} can be applied to obtain
\begin{align}
|\esK_2(x_0)| 
&\le \sum_{e\not\in I_{x_0,\delta}} \left|
     \int_{\Gamma^e} H(x_0,y) v(y)\;dA_y
       - \sum_{q=1}^Q H(x_0,x_q^e) v(x_q^e)W_q^e
                        \right| \\
&\le \sum_{e\not\in I_{x_0,\delta}} 
                     |\Gamma^e| \tau(e,f_{x_0},h)
\le |\Gamma| \max_{e\not\in I_{x_0,\delta}}\tau(e,f_{x_0},h),
\quad
\forall E\ge E_{\beta,\delta}.
\label{LFourSum2A} 
\end{align}
Here $f_{x_0}(y) = H(x_0,y) v(y)$ is the function being
integrated, $\tau(e,f_{x_0},h)$ is the quadrature 
truncation error for this function as given
in (A6), $e\not\in I_{x_0,\delta}$ implies 
$|y-x_0|\ge 19\delta/20$, and $\delta\in(0,d/3]$ is fixed 
such that (\ref{LFourSum1B}) holds.  Since $H(x_0,y)$ is 
class $C^{m,1}$ in $x_0$ and $y$ by (A2) on the set 
$|y-x_0|\ge 19\delta/20$, it follows that the continuity 
properties of $f_{x_0}(y)$ in $y$ are uniform in $x_0$.  
Hence, by (A6), 
$\max_{e\not\in I_{x_0,\delta}}\tau(e,f_{x_0},h)\to 0$
as $h\to 0$ uniformly in $x_0$.  From this we deduce
that, for any given $\veps>0$, we can choose 
$E_{\beta,\delta}$ sufficiently large (equivalently 
$h$ sufficiently small) to get the uniform bound
\begin{equation}
|\esK_2(x_0)| \le \veps,
\quad 
\forall E\ge E_{\beta,\delta},  
\quad 
x_0\in\Gamma.
\label{LFourSum2B} 
\end{equation}

For the term $\esK_3$ in (\ref{LFourDecomp}), we consider the 
index set $J_{x_0}=\{b\;|\;\smash{\widehat\zeta_b(x_0)}>0\}$.
By (A7), we have $\diam(\supp(\smash{\widehat\zeta_b}))\le Ch$, 
and by (A5), we have $|x_a- x_b|\ge Ch$ for any $a\ne b$.  From 
this we deduce that the number of elements in $J_{x_0}$, 
denoted by $|J_{x_0}|$, must be bounded uniformly in $x_0$, 
namely
\begin{equation}
|J_{x_0}| \le C,
\quad
\forall E\ge E_{\beta,\delta},
\quad 
x_0\in\Gamma.
\label{LFourSum3A}
\end{equation}
From (\ref{LFourSum3}) we get, using the definition of $J_{x_0}$ 
and the fact that $0\le\smash{\widehat\zeta_b}\le 1$,
\begin{equation}
|\esK_3(x_0)| 
\le \sum_{b=1}^{n}|\widehat\zeta_b(x_0) R_{x_0}(x_b) v(x_b)| 
\le ||v|| \sum_{b\in J_{x_0}}|R_{x_0}(x_b)|.  
\label{LFourSum3B}
\end{equation}
Using (\ref{LFourSum3B}) and (\ref{LFourSum3A}), together
with Lemma \ref{LemmaThree}(iii), we find that, for any 
given $\veps>0$, we can choose $E_{\beta,\delta}$ 
sufficiently large to get the uniform bound
\begin{equation}
|\esK_3(x_0)| \le \veps,
\quad 
\forall E\ge E_{\beta,\delta},  
\quad 
x_0\in\Gamma.
\label{LFourSum3C} 
\end{equation}
The desired result follows from (\ref{LFourSum3C}), 
(\ref{LFourSum2B}), (\ref{LFourSum1B}) and (\ref{LFourDecomp}).

To establish (C2), we seek to show that $(\esH_n v)(x_0)$ 
is uniformly bounded for all $n\ge n_0$, $x_0\in\Gamma$ 
and $v\in C^0(\Gamma,\Rk)$ with $||v||\le 1$.  By definition
of $\esH_n$, we have  
\begin{equation}
(\esH_n v)(x_0) 
 = \sum_{b=1}^{n}\left[ \zeta_b(x_0)H(x_0,x_b)v(x_b)W_b 
      + \widehat\zeta_b(x_0)R_{x_0}(x_b)v(x_b)\right], 
\end{equation}
and from the definition of $J_{x_0}$ given above, and the 
fact that $0\le\smash{\widehat\zeta_b}\le 1$, we get
\begin{equation}
|(\esH_n v)(x_0)| 
\le ||v|| \sum_{b=1}^{n} |\zeta_b(x_0)H(x_0,x_b)W_b| 
      + ||v|| \sum_{b\in J_{x_0}}|R_{x_0}(x_b)|. 
\end{equation}
From this we deduce, using Lemma \ref{LemmaTwo}(ii) 
and \ref{LemmaThree}(iii), the uniform bound
\begin{equation}
|(\esH_n v)(x_0)| 
\le C,
\quad 
\forall n\ge n_0,  
\quad 
x_0\in\Gamma,
\quad 
v\in C^0(\Gamma,\Rk),
\quad 
||v||\le 1.
\end{equation}

To establish (C3), we seek to show that $(\esH_n v)(x_0)$
is uniformly equicontinuous for all $n\ge n_0$, 
$x_0\in\Gamma$ and $v\in C^0(\Gamma,\Rk)$ with $||v||\le 1$.  
For arbitrary $x_0,x_*\in\Gamma$, we have, by definition 
of $\esH_n$,
\begin{equation}
\begin{split}
&(\esH_n v)(x_*) - (\esH_n v)(x_0) \\
&\quad 
  = \sum_{b=1}^{n} \zeta_b(x_*)H(x_*,x_b)v(x_b)W_b 
           - \sum_{b=1}^{n} \zeta_b(x_0)H(x_0,x_b)v(x_b)W_b \\
&\hskip1.0in
  + \sum_{b=1}^{n} \widehat\zeta_b(x_*)R_{x_*}(x_b)v(x_b) 
  - \sum_{b=1}^{n} \widehat\zeta_b(x_0)R_{x_0}(x_b)v(x_b). 
\end{split}
\end{equation}
Similar to before, we consider the decomposition
\begin{equation}
(\esH_n v)(x_*) - (\esH_n v)(x_0)
                 = \esQ(x_0,x_*) + \esR(x_*) - \esR(x_0),
\label{LFourDecompDiff} 
\end{equation}
where
\begin{gather}
\esQ(x_0,x_*) 
= \sum_{b=1}^{n} \left[ 
        \zeta_b(x_*)H(x_*,x_b)W_b
                     - \zeta_b(x_0)H(x_0,x_b)W_b
                                          \right] v(x_b), 
\label{LFourSumDiff1} \\
\esR(x_*) 
= \sum_{b=1}^{n} 
        \widehat\zeta_b(x_*) R_{x_*}(x_b) v(x_b),  
\quad
\esR(x_0) 
= \sum_{b=1}^{n} 
        \widehat\zeta_b(x_0) R_{x_0}(x_b) v(x_b).  
\label{LFourSumDiff2} 
\end{gather}
To establish the result, we show that, for every $\veps>0$,
we can choose a $\delta\in(0,d/3]$ such that $|\esQ|\le\veps$ 
and $|\esR|\le\veps$ for all $x_0,x_*\in\Gamma$ with 
$|x_0-x_*|<\delta$, $v\in C^0(\Gamma,\Rk)$ with $||v||\le 1$, 
and $n\ge n_0$. Notice that this result immediately follows
from Lemma \ref{LemmaTwo}(iii), Lemma \ref{LemmaThree}(iii) 
and (\ref{LFourSum3A}), after increasing the value of $n_0$
if necessary, since
\begin{gather}
|\esQ(x_0,x_*)| 
\le ||v||\sum_{b=1}^{n} |\zeta_b(x_*)H(x_*,x_b)W_b
      - \zeta_b(x_0)H(x_0,x_b)W_b|, 
\label{LFourSumDiff1A} \\
|\esR(x_*)| \le ||v|| \sum_{b\in J_{x_*}}|R_{x_*}(x_b)|,  
\quad
|\esR(x_0)| \le ||v|| \sum_{b\in J_{x_0}}|R_{x_0}(x_b)|.  
\label{LFourSumDiff2A} 
\end{gather}
\end{proof}

The next result will be crucial in establishing the linear 
rate of convergence of the lowest-order method with $p=0$.
We employ the notation from condition (A3).  Moreover, for 
any given $x_0\in\Gamma$ and $\varphi\in C^{1,1}(\Gamma,\Rk)$, 
we introduce the function $U_{x_0,\varphi}:\Gamma_{x_0,d}\to\Rk$
defined as
\begin{equation}
U_{x_0,\varphi}(y) = 
\begin{cases}
H(x_0,y)[\varphi(y)-\varphi(x_0)]
 - u_{x_0}^{\rm polar}(0,\hxi(y)) D\varphi(x_0)T_{x_0}(y) , 
&y\ne x_0, \\
0,
&y=x_0. \\
\end{cases}
\label{Ufun}
\end{equation}
Here $T_{x_0}(y)\in T_{x_0}\Gamma$ is the unit vector 
defined from the orthogonal projection of the chord $y-x_0$ 
onto $T_{x_0}\Gamma$, and $\smash{\hxi(y)}\in S$ is the 
angular polar coordinate associated with $y\ne x_0$.  
Specifically, we consider the functions
$T_{x_0}:\Gamma_{x_0,d}\backslash\{x_0\}\to\Rt$
and
$T_{x_0}^{\Delta}:\Gamma_{x_0,d}\to\Rt$ defined as
\begin{equation}
T_{x_0}(y) 
= {\tstyle{(y-x_0) - ((y-x_0)\cdot\nu_0)\nu_0 \over 
   |(y-x_0) - ((y-x_0)\cdot\nu_0)\nu_0|}}, 
\quad
T_{x_0}^{\Delta}(y) = 
\begin{cases}
{(y-x_0)\over|y-x_0|} - T_{x_0}(y) , 
&y\ne x_0, \\
0,
&y=x_0, \\
\end{cases}
\label{Tfun}
\end{equation}
where $\nu_0=\nu(x_0)$ is the outward unit normal to 
$\Gamma$ at $x_0$.  We remark that the result in part 
(ii)(b) below, and its discrete analog in part (ii)(c), 
are reminiscent of the classic Tricomi condition that 
arises in the study of singular integral operators 
\cite{Tricomi:1928}. 

\begin{lemma}
\label{LemmaFive}
Let $\Gamma$ satisfy {\rm (A0)} with Lyapunov
radius $d>0$ and regularity index $m\ge 1$, 
$H$ satisfy {\rm (A2)} and {\rm(A3)} with 
exponent $\mu=1$, and 
$\{\smash{\Gamma^e},\smash{x_q^e},\smash{W_q^e}\}$ 
satisfy {\rm (A4)}--{\rm (A6)}.  Then:
\begin{itemize}
\item[{\rm (i)}] For every $x_0\in\Gamma$ and 
$\varphi\in C^{1,1}(\Gamma,\Rk)$ we have 
$U_{x_0,\varphi}\in C^{0,1}(\Gamma_{x_0,d},\Rk)$
with Lipschitz constant uniform in $x_0$.  Hence
$|U_{x_0,\varphi}(y)|\le C_\varphi |y-x_0|$ for all 
$x_0\in\Gamma$ and $y\in \Gamma_{x_0,d}$.
\item[{\rm (ii)}] For every $x_0\in\Gamma$ there 
exists a patch $\Gamma_{x_0}^*\subset\Gamma$ such that
\begin{itemize}
\item[{\rm (a)}] 
$\Gamma_{x_0,d/C}\subset\Gamma_{x_0}^*\subset\Gamma_{x_0,d}$,
\item[{\rm (b)}] 
$\int_{\Gamma_{x_0}^*} u_{x_0}^{\rm polar}(0,\hxi(y)) 
D\varphi(x_0)T_{x_0}(y) \; dA_y = 0$ for each
$\varphi\in C^{1,1}(\Gamma,\Rk)$,
\item[{\rm (c)}] 
$|\sum_{e\in I_{x_0}^*} \sum'_{1\le q\le Q} 
u_{x_0}^{\rm polar}(0,\hxi(x_q^e)) 
D\varphi(x_0)T_{x_0}(x_q^e) W_q^e| \le C_\varphi h$
for all $n\ge n_0$ and each $\varphi\in C^{1,1}(\Gamma,\Rk)$.
\end{itemize}
\end{itemize}
In the above, $C$ denotes a fixed constant, $C_\varphi$ denotes 
a constant depending on $\varphi$, the sum with a prime is over 
those $q$ such that $x_q^e\ne x_0$, and 
$I_{x_0}^*=\{e\;|\; \Gamma^e\subset \Gamma_{x_0}^*\}$. 
\end{lemma}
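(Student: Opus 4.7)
The plan for part (i) is to exhibit $U_{x_0,\varphi}$ as a sum of pieces each of which is $C^{0,1}$ on $\Gamma_{x_0,d}$ with Lipschitz constant uniform in $x_0$; the pointwise bound $|U_{x_0,\varphi}(y)|\le C_\varphi|y-x_0|$ then follows from $U_{x_0,\varphi}(x_0)=0$. Since $\varphi\in C^{1,1}$, a Taylor expansion along the tangent ray $\tau\mapsto\psi_{x_0}(\tau\xi)$ gives $\varphi(y)-\varphi(x_0)=\rho\,D\varphi(x_0)T_{x_0}(y)+R_\varphi(y)$ with $|R_\varphi|\le C_\varphi|y-x_0|^2$, where $\rho=|(y-x_0)_{\rm tan}|$. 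The $C^{1,1}$ regularity of $\Gamma$ from (A0) forces the normal deviation of $y$ from $T_{x_0}\Gamma$ to be $O(\rho^2)$, so $|\rho/|y-x_0|-1|\le C|y-x_0|^2$. Combining these with the splitting $u(x_0,y)=u_{x_0}^{\rm polar}(0,\hxi)+u_{x_0}^\Delta(y)$ from (A3)(iii) makes the leading term in the expansion of $H(x_0,y)[\varphi(y)-\varphi(x_0)]$ cancel against the subtracted $u_{x_0}^{\rm polar}(0,\hxi)D\varphi(x_0)T_{x_0}(y)$ in $U_{x_0,\varphi}$ by design, leaving $u_{x_0}^\Delta D\varphi(x_0)T_{x_0}$, an $O(|y-x_0|^2)$ multiple of a bounded quantity, and $H(x_0,y)R_\varphi$, each bounded by $C_\varphi|y-x_0|$. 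The only subtle piece for Lipschitz continuity is $u_{x_0}^\Delta T_{x_0}$: although $T_{x_0}$ has derivatives of size $1/|y-x_0|$, $u_{x_0}^\Delta$ vanishes at $x_0$ at a Lipschitz rate by (A3)(iii), and the two behaviors compensate to give a uniform Lipschitz bound.

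For part (ii), I would construct $\Gamma_{x_0}^*$ in tangent-plane polar coordinates so that the inner radial integral becomes $\hxi$-independent. Writing the surface area element as $dA_y=\rho\,j_{x_0}(\rho\hxi)\,d\rho\,d\theta$ with $j_{x_0}$ the Jacobian of $\psi_{x_0}$, define $\rho^*(\hxi)$ implicitly by
\begin{equation*}
\int_0^{\rho^*(\hxi)} s\,j_{x_0}(s\hxi)\,ds = F_0
\end{equation*}
for a single fixed constant $F_0>0$, and take $\Gamma_{x_0}^*=\psi_{x_0}(\{\xi:|\xi|\le\rho^*(\xi/|\xi|)\})$. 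Since $\Gamma\in C^{1,1}$ gives uniform positive lower and upper bounds on $j_{x_0}$, $\rho^*(\hxi)$ is well-defined and uniformly comparable to $\sqrt{F_0}$, so picking $F_0$ proportional to $d^2$ delivers the inclusions in (a). For (b), polar conversion factors out the constant inner integral, leaving $F_0\,D\varphi(x_0)\int_0^{2\pi}u_{x_0}^{\rm polar}(0,\hxi(\theta))T_{x_0}\,d\theta$, where $T_{x_0}=\hxi_\alpha t_{x_0,\alpha}$ depends only on $\hxi$; (A3)(ii) makes $u_{x_0}^{\rm polar}(0,\hxi)$ even in $\hxi$ while $T_{x_0}$ is odd, so the angular integral vanishes.

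For (c), I would use (b) to write the quadrature sum as its own discrepancy from the vanishing exact integral, and split the result into three contributions. For elements $\Gamma^e$ at distance $r_e\ge Ch$ from $x_0$, the integrand is $C^{0,1}$ on $\Gamma^e$ with Lipschitz constant $O(1/r_e)$ coming entirely from $T_{x_0}$; applying (A6) with $\ell\ge 1$ gives a per-element error of $O(h^3/r_e)$, and summing yields a contribution $O(h\int_{\Gamma_{x_0}^*}|y-x_0|^{-1}\,dA_y)=O(h)$ since the inner integral is $O(d)$ in polar coordinates. The $O(1)$ elements within distance $Ch$ of $x_0$ each contribute $O(h^2)$ by (A5) and boundedness of the integrand, hence $O(h^2)$ in total. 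Finally, the symmetric difference between $\bigcup_{e\in I_{x_0}^*}\Gamma^e$ and $\Gamma_{x_0}^*$ is a boundary strip of width $O(h)$ and perimeter $O(d)$, contributing $O(h)$.

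The principal obstacle is (c): unlike the integrably singular integrands in Lemmas \ref{LemmaTwo} and \ref{LemmaThree}, the integrand here is bounded but has a Lipschitz constant blowing up like $1/|y-x_0|$ near $x_0$, so naive per-element quadrature bounds do not deliver the required $O(h)$ rate. The device that makes (c) succeed is the distance-weighted summation, which converts the singular pointwise Lipschitz constant into the integrable weight $1/|y-x_0|$. A secondary delicate point is that the polar construction of $\Gamma_{x_0}^*$ is mesh-independent, so an $O(h)$ boundary-mismatch estimate is needed to bridge $\Gamma_{x_0}^*$ to $\bigcup_{e\in I_{x_0}^*}\Gamma^e$.
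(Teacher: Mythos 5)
Parts (i) and (ii)(a,b) of your proposal align closely with the paper's argument. In (i) you expand $\varphi$ by Taylor's theorem, split $u(x_0,y)=u_{x_0}^{\rm polar}(0,\hxi)+u_{x_0}^{\Delta}(y)$ via (A3)(iii), and handle the defect between the chord direction and $T_{x_0}(y)$; the paper does exactly this, packaging the defect as the Lipschitz function $T_{x_0}^{\Delta}(y)=\tfrac{y-x_0}{|y-x_0|}-T_{x_0}(y)$ with $|T_{x_0}^{\Delta}(y)|\le C|y-x_0|$ rather than the slightly sharper $O(|y-x_0|^2)$ scalar estimate you use; either suffices. Your observation that the Lipschitz continuity of $u_{x_0}^{\Delta}D\varphi(x_0)T_{x_0}$ hinges on the product of the $1/|y-x_0|$ blow-up in $DT_{x_0}$ against the vanishing of $u_{x_0}^{\Delta}$ is precisely the point the paper verifies by direct differentiation. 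In (ii)(a,b) you normalize the inner radial integral to a constant $F_0$ rather than to the even function $I(\hxi)$ the paper uses; this is a cosmetic simplification (the paper's $\max$ in $I(\hxi)$ is there only to keep $R_\pm$ bounded away from zero, which your constant-$F_0$ version handles automatically), and the antipodal cancellation using evenness of $u_{x_0}^{\rm polar}(0,\cdot)$ and oddness of $T_{x_0}(\hxi)$ is identical.

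There is, however, a genuine gap in your treatment of (ii)(c). You bound the away-from-$x_0$ contribution by invoking (A6) element by element with a constant ``$O(1/r_e)$'' proportional to the local Lipschitz constant of the integrand, giving ``per-element error $O(h^3/r_e)$.'' But (A6) as stated gives $\tau(e,f,h)\le C(f)\,h^{\ell}$ with $C(f)$ depending on the \emph{global} function $f\in C^{\ell-1,1}(\Gamma,\Reals)$ and \emph{independent of} $e$; it makes no claim that $C(f)$ scales linearly with a local Lipschitz seminorm, nor can it apply directly to $F_{x_0}\circ\hxi$, which is not $C^{0,1}$ on all of $\Gamma$ because of its angular discontinuity at $x_0$. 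The paper avoids this by a device you don't supply: since $W_q^e>0$, the Intermediate Value Theorem furnishes, for each component of $F_{x_0}$ and each element, a point $\hxi^e\in\Omega^e$ with $\sum_q F_{x_0}(\hxi(x_q^e))W_q^e = F_{x_0}(\hxi^e)\sum_q W_q^e$. Splitting around this identity produces three terms: a purely geometric one $\sum_e\int_{\Gamma^e}(F_{x_0}(\hxi^e)-F_{x_0}(\hxi(y)))\,dA_y$, which is estimated directly (no quadrature assumption) by the Lipschitz property of $F_{x_0}$ and $|\hxi^e-\hxi(y)|\le Ch/|y-x_0|$, giving exactly the $C_\varphi h\int_{\Gamma_{x_0}^*}|y-x_0|^{-1}\,dA_y=O(h)$ you identify heuristically; a term $\sum_e F_{x_0}(\hxi^e)(\sum_q W_q^e - |\Gamma^e|)$, where (A6) is applied only to the constant function $1$, a legitimately global $C^{\infty}$ object; and the boundary-strip term you do account for. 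Your strategic picture — split near/far/boundary, exploit the integrable $1/|y-x_0|$ weight — is right, but without the IVT step the $O(h)$ estimate for the far field does not follow from the stated hypotheses.
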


\begin{proof}
For part (i), let $x_0\in\Gamma$ and 
$\varphi\in C^{1,1}(\Gamma,\Rk)$ be arbitrary.  By working 
with local Cartesian coordinates in the Lyapunov patch 
$\Gamma_{x_0,d}$ and using Taylor's Theorem for Lipschitz 
functions, we find
\begin{equation}
\varphi(y) = \varphi(x_0) + D\varphi(x_0)[y-x_0] + 
\esR_{x_0}(y), 
\quad
\forall y\in\Gamma_{x_0,d},
\label{LemmaFive1}
\end{equation}
where 
\begin{equation}
\begin{gathered}
|\esR_{x_0}(y)|\le C_\varphi |y-x_0|^2, \\
\esR_{x_0}\in C^{1,1}(\Gamma_{x_0,d},\Rk),
\quad
{\esR_{x_0}\over |y-x_0|}\in C^{0,1}(\Gamma_{x_0,d},\Rk).
\end{gathered}
\label{LemmaFive1b}
\end{equation}
Moreover, since $\varphi$ is globally Lipschitz on 
$\Gamma$, the Lipschitz constants for $\esR_{x_0}$ and 
${\esR_{x_0}/|y-x_0|}$ are uniform in $x_0\in\Gamma$,
where the latter function is defined to be zero when 
$y=x_0$.  

To establish properties of the functions $T_{x_0}$
and $T_{x_0}^{\Delta}$, for any given $y\in\Gamma_{x_0,d}$
with $y\ne x_0$, we consider an arbitrary curve 
$\gamma(\tau)\in\Gamma_{x_0,d}$ such that $\gamma(0)=y$.  
Then, by direct calculation from their definitions, we 
find $|{d\over d\tau} T_{x_0}(\gamma(\tau))\vert_{\tau=0}|
\le C |\gamma'(0)|/|y-x_0|$ and
$|{d\over d\tau} T_{x_0}^{\Delta}(\gamma(\tau))\vert_{\tau=0}|
\le C |\gamma'(0)|$.  From this, we deduce
\begin{equation}
\begin{gathered}
|DT_{x_0}(y)|\le {C\over|y-x_0|}, 
\quad
|DT_{x_0}^{\Delta}(y)|\le C, 
\quad
\forall y\in\Gamma_{x_0,d},
\quad 
y\ne x_0, \\
T_{x_0}^{\Delta}\in C^{0,1}(\Gamma_{x_0,d},\Rt),
\end{gathered}
\label{LemmaFive1c}
\end{equation}
where the last result above follows from the fact that 
$T_{x_0}^{\Delta}(y)$ has a uniformly bounded surface 
derivative for almost every $y\in\Gamma_{x_0,d}$.
Notice that the Lipschitz constant for $T_{x_0}^{\Delta}$,
equivalently the bound on its surface derivative, is 
uniform in $x_0\in\Gamma$.  Moreover, by definition
of $T_{x_0}$, and the Lipschitz property of 
$T_{x_0}^{\Delta}$, and the fact that 
$T_{x_0}^{\Delta}(x_0)=0$, we have the bounds 
\begin{equation}
|T_{x_0}(y)| = 1,
\quad
|T_{x_0}^{\Delta}(y)| \le C|y-x_0|, 
\quad
\forall y\in\Gamma_{x_0,d},
\quad 
y\ne x_0. \\
\label{LemmaFive1d}
\end{equation}

Proceeding with part (i), for any $y\in\Gamma_{x_0,d}$ with
$y\ne x_0$, we use (\ref{LemmaFive1b}), (\ref{LemmaFive1}) 
and (\ref{Ufun}), and (A2) with $\mu=1$, to write
\begin{equation}
\begin{split}
&U_{x_0,\varphi}(y) \\
&\quad
 = H(x_0,y)[\varphi(y)-\varphi(x_0)]
   - u_{x_0}^{\rm polar}(0,\hxi(y)) D\varphi(x_0)T_{x_0}(y), \\ 
&\quad
 = u(x_0,y)\left[D\varphi(x_0) {\tstyle{(y-x_0)\over|y-x_0|}}
   + {\tstyle{\esR_{x_0}(y)\over |y-x_0|}}\right]
   - u_{x_0}^{\rm polar}(0,\hxi(y)) D\varphi(x_0)T_{x_0}(y). \\ 
\end{split}
\end{equation}
In terms of the functions $\smash{T_{x_0}^{\Delta}}$ 
defined in (\ref{Tfun}) and $\smash{u_{x_0}^{\Delta}}$ 
defined in (A3)(iii), this becomes
\begin{equation}
\begin{split}
U_{x_0,\varphi}(y) 
&= u(x_0,y)D\varphi(x_0)T_{x_0}^{\Delta}(y) \\
&\hskip1.0in
   + u(x_0,y) {\esR_{x_0}(y)\over |y-x_0|}
   + u_{x_0}^{\Delta}(y) D\varphi(x_0)T_{x_0}(y). 
\end{split}
\label{LemmaFive2}
\end{equation}
Since $|u(x_0,y)|\le C$ by (A2), 
$|\smash{T_{x_0}(y)}|=1$ by definition, 
and $|D\varphi(x_0)|\le C_\varphi$ by the regularity 
assumption on $\varphi$, and
$|\smash{T_{x_0}^{\Delta}(y)}|\le C|y-x_0|$ by
(\ref{LemmaFive1d}), 
${|\esR_{x_0}(y)|/|y-x_0|}\le C_\varphi |y-x_0|$ by 
(\ref{LemmaFive1b}), and $|\smash{u_{x_0}^{\Delta}(y)}|
=|\smash{u_{x_0}^{\Delta}(y)} - \smash{u_{x_0}^{\Delta}(x_0)}|
\le C|y-x_0|$ by (A3)(iii), we obtain the bound
\begin{equation}
|U_{x_0,\varphi}(y)| \le C_\varphi |y-x_0|,
\quad
\forall y\in\Gamma_{x_0,d},
\quad
y\ne x_0.
\end{equation}
In the above, the constant $C_\varphi$ depends on $\varphi$, 
but is independent of $x_0$ and $y$.  Notice that the bound 
also holds when $y=x_0$ by definition of $U_{x_0,\varphi}$.  
To establish the Lipschitz property of this function,
for any given $y\in\Gamma_{x_0,d}$ with $y\ne x_0$, we 
again consider an arbitrary curve $\gamma(\tau)\in\Gamma_{x_0,d}$ 
such that $\gamma(0)=y$.  Then, by direct calculation using
(\ref{LemmaFive2}), (\ref{LemmaFive1d}), (\ref{LemmaFive1b}),
(A2) and (A3)(iii), we find 
$|{d\over d\tau} U_{x_0,\varphi}(\gamma(\tau))\vert_{\tau=0}|
\le C_\varphi |\gamma'(0)|$.  From this, we deduce
\begin{equation}
\begin{gathered}
|DU_{x_0,\varphi}(y)|\le C_\varphi, 
\quad
\forall y\in\Gamma_{x_0,d},
\quad 
y\ne x_0, \\
U_{x_0,\varphi}\in C^{0,1}(\Gamma_{x_0,d},\Rk),
\end{gathered}
\label{LemmaFive3}
\end{equation}
where, as before, the last result above follows from the 
fact that $U_{x_0,\varphi}(y)$ has a uniformly bounded 
surface derivative for almost every $y\in\Gamma_{x_0,d}$.
Notice that the Lipschitz constant for $U_{x_0,\varphi}$
depends on $\varphi$, but is uniform in $x_0\in\Gamma$.  
Thus the result in part (i) is established.

For parts (ii)(a,b), let $x_0\in\Gamma$ be arbitrary 
and consider a local Cartesian coordinate map 
$y=\psi_{x_0}(\xi)\in\Gamma_{x_0,d}$ with
$\xi\in\Omega_{x_0,d}\subset\Reals^2$, and
a local polar coordinate map 
$y=\psi_{x_0}(\varpi(\rho,\hxi))
=\smash{\psi_{x_0}^{\rm polar}}(\rho,\hxi)$ with
$(\rho,\hxi)\in\smash{\Omega_{x_0,d}^{\rm polar}}\subset
\Reals_+\times S$ and $\xi=\varpi(\rho,\hxi)=\rho\hxi$. 
Introducing the Jacobian factor
\begin{equation}
J(\xi) = \Big({\det\Big[
\Big({\partial\psi_{x_0}\over\partial\xi}\Big)^T 
{\partial\psi_{x_0}\over\partial\xi}\Big]}\Big)^{1/2}, 
\label{LemmaFive4}
\end{equation}
we have the relations
\begin{equation}
dA_y 
= J(\xi)\;dA_\xi 
= J(\rho\hxi)\;\rho\;d\rho\;dS_{\hxi}, 
\label{LemmaFive5}
\end{equation}
where $dA_y$ is an area element in $\Gamma_{x_0,d}$,
$dA_\xi$ is an area element in $\Omega_{x_0,d}$ and
$dS_{\hxi}$ is an arclength element on $S$.  By
conditions (L2) and (L3) on $\Gamma$, we note that 
the Lyapunov radius $d>0$ can be chosen small enough 
such that
\begin{equation}
1\le J(\xi) \le 2,
\quad
\forall \xi \in \Omega_{x_0,d},
\quad
x_0\in\Gamma.
\label{LemmaFive6}
\end{equation}
Moreover, if we let $L\ge 1$ denote the Lipschitz 
constant for $\psi_{x_0}$, which is uniform in $x_0$, 
then a straightforward argument using the definitions 
of $\Gamma_{x_0,\delta}$ and $\Omega_{x_0,\delta}$ 
shows that
\begin{equation}
D_{x_0,\delta/L}\subset \Omega_{x_0,\delta} \subset D_{x_0,\delta},
\quad
\forall \delta\in(0,d],
\quad
x_0\in\Gamma,
\label{LemmaFive6b}
\end{equation}
where $D_{x_0,\delta}$ is the closed disc of radius 
$\delta>0$ at the origin. 

Proceeding with (ii)(a,b), we consider the 
function $I:S\to\Reals$ defined by
\begin{equation}
I(\hxi) = \max\left\{
\int_0^{d/2L}J(\rho\hxi)\;\rho\;d\rho,
\int_0^{d/2L}J(-\rho\hxi)\;\rho\;d\rho\right\}.
\label{LemmaFive7}
\end{equation}
Notice that $I(\hxi)$ is well-defined since 
$D_{x_0,d/2L}\subset \Omega_{x_0,d}$ by 
(\ref{LemmaFive6b}).  From (\ref{LemmaFive7}),
we deduce $I(-\hxi)=I(\hxi)$, and from 
(\ref{LemmaFive7}) and (\ref{LemmaFive6})
we deduce, using a change of variable and
the fact that $J(\xi)/J(\xi')\le 2$ for all 
$\xi,\xi'\in\Omega_{x_0,d}$,
\begin{equation}
{d^2\over 8L^2}
\le
\int_0^{d/2L}J(\rho\hxi)\;\rho\;d\rho
\le I(\hxi) \le 
\int_0^{d/L}J(\rho\hxi)\;\rho\;d\rho
\le
{d^2\over L^2},
\label{LemmaFive8}
\end{equation}
where the second integral is also well-defined 
since $D_{x_0,d/L}\subset \Omega_{x_0,d}$ by
(\ref{LemmaFive6b}).  For any given choice 
of orthonormal basis in $T_{x_0}\Gamma$, we 
consider the components $\xi=(\xi_1,\xi_2)$ and 
$\hxi=(\hxi_1,\hxi_2)$, and the decomposition 
$S=S_+\cup S_-\subset\Reals^2$, where $S_+$ and
$S_-$ are disjoint subsets defined as
\begin{equation}
\begin{gathered}
S_+=\{\hxi\in S\;|\; 
       \hxi_2>0 \;\hbox{\rm or}\; \hxi=(1,0)\}, \\
S_-=\{\hxi\in S\;|\; 
       \hxi_2<0 \;\hbox{\rm or}\; \hxi=(-1,0)\}.
\end{gathered}
\label{LemmaFive9}
\end{equation}
Moreover, we consider functions $R_+:S_+\to\Reals$
and $R_-:S_-\to\Reals$ defined implicitly by
\begin{equation}
\begin{gathered}
\int_0^{R_+(\hxi)}J(\rho\hxi)\;\rho\;d\rho
               = I(\hxi), \quad\forall\hxi\in S_+, \\
\int_0^{R_-(\hxi)}J(\rho\hxi)\;\rho\;d\rho
               = I(\hxi), \quad\forall\hxi\in S_-, 
\end{gathered}
\label{LemmaFive10}
\end{equation}
and from (\ref{LemmaFive8}) and (\ref{LemmaFive7}) we 
note that $R_+,R_-\in[d/2L,d/L]$.  Furthermore, we use 
these functions to define subsets of $\Omega_{x_0,d}$ by
\begin{equation}
\begin{gathered}
\Omega_{x_0,+}^*=\{\xi=\rho\hxi\in\Omega_{x_0,d} \;|\; 
       0\le\rho\le R_+(\hxi), \; \hxi\in S_+\}, \\
\Omega_{x_0,-}^*=\{\xi=\rho\hxi\in\Omega_{x_0,d} \;|\; 
       0\le\rho\le R_-(\hxi), \; \hxi\in S_-\},
\end{gathered}
\label{LemmaFive12}
\end{equation}
and we note that, since $R_+,R_-\in[d/2L,d/L]$ and
$S_+ \cup S_- = S$, we have
\begin{equation}
D_{x_0,d/2L}
\subset 
\Big(\Omega_{x_0,+}^* \cup \Omega_{x_0,-}^*\Big)
\subset 
D_{x_0,d/L},
\quad
\forall x_0\in\Gamma.
\label{LemmaFive14}
\end{equation}
Finally, we define a subset of $\Gamma_{x_0,d}$ by
\begin{equation}
\Gamma_{x_0}^*
=\{ y=\varphi_{x_0}(\xi)\in\Gamma_{x_0,d} \;|\;
   \xi\in \Omega_{x_0,+}^* \cup \Omega_{x_0,-}^* \}.
\label{LemmaFive15}
\end{equation}

The subset $\Gamma_{x_0}^*$ has the properties stated 
in part (ii)(a,b).  Specifically, by (\ref{LemmaFive15}), 
(\ref{LemmaFive14}) and (\ref{LemmaFive6b}), it satisfies 
the uniformity condition 
\begin{equation}
\Gamma_{x_0,d/2L}
\subset
\Gamma_{x_0}^*
\subset
\Gamma_{x_0,d},
\quad
\forall x_0\in\Gamma.
\label{LemmaFive16}
\end{equation}
Moreover, for any $\varphi\in C^{1,1}(\Gamma,\Rk)$, we have
\begin{equation}
\begin{split}
&\int_{\Gamma_{x_0}^*} 
  u_{x_0}^{\rm polar}(0,\hxi(y)) 
            D\varphi(x_0)T_{x_0}(y) \; dA_y \\
&\hskip0.3in
= \int_{S_+} \int_{0}^{R_+(\hxi)} 
  u_{x_0}^{\rm polar}(0,\hxi) 
     D\varphi(x_0)T_{x_0}(\hxi) 
       J(\rho\hxi) \;\rho\;d\rho\;dS_{\hxi} \\
&\hskip0.8in
+ \int_{S_-} \int_{0}^{R_-(\hxi)} 
  u_{x_0}^{\rm polar}(0,\hxi) 
     D\varphi(x_0)T_{x_0}(\hxi) 
       J(\rho\hxi) \;\rho\;d\rho\;dS_{\hxi}, \\
\end{split}
\label{LemmaFive17}
\end{equation}
where we note that the unit tangent vector $T_{x_0}(y)$ 
is uniquely determined by the angular coordinate 
$\hxi(y)\in S$, and hence we write $T_{x_0}(\hxi)$ in 
the last two integrals.  Specifically, for any given
orthonormal basis $\{t_{x_0,1},t_{x_0,2}\}$ in 
$T_{x_0}\Gamma$, we have
\begin{equation}
T_{x_0}(\hxi) = \hxi_1 t_{x_0,1} + \hxi_2 t_{x_0,2}.
\label{LemmaFive17a}
\end{equation}
Combining (\ref{LemmaFive17}) with (\ref{LemmaFive10}), 
we get
\begin{equation}
\begin{split}
&\int_{\Gamma_{x_0}^*} 
  u_{x_0}^{\rm polar}(0,\hxi(y)) 
            D\varphi(x_0)T_{x_0}(y) \; dA_y \\
&\hskip0.8in
= \int_{S_+} u_{x_0}^{\rm polar}(0,\hxi) 
     D\varphi(x_0)T_{x_0}(\hxi) I(\hxi)\;dS_{\hxi} \\
&\hskip1.6in
+ \int_{S_-} u_{x_0}^{\rm polar}(0,\hxi) 
     D\varphi(x_0)T_{x_0}(\hxi) I(\hxi)\;dS_{\hxi}. \\
\end{split}
\label{LemmaFive17b}
\end{equation}
Since $u_{x_0}^{\rm polar}(0,\hxi)$ is an even function
on $S$ by (A3)(i,ii), $I(\hxi)$ is an even function on
$S$ by (\ref{LemmaFive7}), and $T_{x_0}(\hxi)$ is an 
odd function on $S$ by (\ref{LemmaFive17a}), and 
since the antipodal map $\hxi\mapsto-\hxi$ is a
length- and orientation-preserving map of $S$ onto $S$
which maps $S_+$ onto $S_-$, we find that the  two
integrals on the right-hand side of (\ref{LemmaFive17b})
cancel, which implies
\begin{equation}
\int_{\Gamma_{x_0}^*} 
  u_{x_0}^{\rm polar}(0,\hxi(y)) 
            D\varphi(x_0)T_{x_0}(y) \; dA_y = 0,
\quad
\forall x_0\in\Gamma,
\quad
\varphi\in C^{1,1}(\Gamma,\Rk).
\label{LemmaFive17c}
\end{equation}
Thus the results in parts (ii)(a,b) are established.

For part (ii)(c), we use notation similar to that used in 
the proof of Lemma \ref{LemmaTwo} and let $\beta\ge 10L+1$ 
and $E_{\beta}^*\ge E_0$ be given numbers, where $E_{\beta}^*$ 
is sufficiently large such that $0<\beta h \le d/4L$ for all 
$E\ge E_{\beta}^*$.  Moreover, for any given $x_0\in\Gamma$, 
we consider the collections of surface elements $\Gamma^e$, 
$e=1,\ldots,E$, defined by
\begin{equation}
I_{x_0}^*=\{e\;|\; \Gamma^e \subset\Gamma_{x_0}^*\}
\quad
\hbox{\rm and}
\quad
I_{x_0,\beta h}=\{e\;|\; \Gamma^e \subset\Gamma_{x_0,\beta h}\},
\end{equation}
and note that, by design, the sets 
$I_{x_0,\beta h}\subset I_{x_0}^*$ and
$I_{x_0}^*\backslash I_{x_0,\beta h}$
are non-empty for all $E\ge E_{\beta}^*$.
Moreover, for any given $\varphi\in C^{1,1}(\Gamma,\Rk)$, 
we consider the decomposition
\begin{equation}
{\tstyle\sum_{e\in I_{x_0}^*}}\;
{\tstyle\sum'_{1\le q\le Q}}\; 
F_{x_0}(\hxi(x_q^e))W_q^e 
= \esB_1(x_0) + \esB_2(x_0),
\label{LFiveDecomp} 
\end{equation}
where $F_{x_0}(\hxi) = u_{x_0}^{\rm polar}(0,\hxi) 
D\varphi(x_0)T_{x_0}(\hxi)$ and
\begin{align}
\esB_1(x_0) 
&= {\tstyle\sum_{e\in I_{x_0,\beta h}}}\;
   {\tstyle\sum'_{1\le q\le Q}}\;
   F_{x_0}(\hxi(x_q^e))W_q^e, 
\label{LFiveSum1} \\
\esB_2(x_0) 
&= {\tstyle\sum_{e\in I_{x_0}^*\backslash I_{x_0,\beta h}}}\;
   {\tstyle\sum_{q=1}^Q}\;
   F_{x_0}(\hxi(x_q^e))W_q^e. 
\label{LFiveSum2} 
\end{align}
To establish the result, we show that the above sums 
satisfy the bounds $|\esB_1|\le C_\varphi h$ and 
$|\esB_2|\le C_\varphi h$ for all $E\ge E_{\beta}^*$
and $x_0\in\Gamma$, where the constant $C_\varphi$ 
depends on $\varphi$, but is independent of $x_0$.  
Results for the finite interval $E\in[E_0,E_{\beta}^*]$ 
are straightforward consequences of the boundedness of
$F_{x_0}$ and will be omitted for brevity.

For the term $\esB_1$ in (\ref{LFiveDecomp}), we use the
fact that $|F_{x_0}|\le C_\varphi$ by (A2) and (A3),
$|I_{x_0,\beta h}|\le C$ by (\ref{LTwoSum1A}), and
$\sum_{q=1}^Q W_q^e\le C h^2$ by (A5), to deduce
\begin{equation}
\begin{split}
|\esB_1(x_0)| 
&\le {\tstyle\sum_{e\in I_{x_0,\beta h}}}\;
   {\tstyle\sum'_{1\le q\le Q}}\;
   |F_{x_0}(\hxi(x_q^e))|W_q^e  \\
&\le C_\varphi h^2 \le C_\varphi h,
\quad
\forall E\ge E_{\beta}^*,
\quad
x_0\in\Gamma,
\end{split}
\label{LFiveSum1A}
\end{equation}
which establishes the result for $\esB_1$. 

For the second term $\esB_2$ in (\ref{LFiveDecomp}), 
we notice that 
$\Gamma_{x_0,(\beta-1)h}\subset\Gamma_{x_0,\beta h}$
and that $\dist(\partial \Gamma_{x_0,(\beta-1)h},
\partial\Gamma_{x_0,\beta h})\ge h\ge\diam(\Gamma^e)$.
Hence, if $\Gamma^e\cap\Gamma_{x_0,(\beta-1)h}\ne\emptyset$,
then $\Gamma^e\subset \Gamma_{x_0,\beta h}$.
From this we deduce that, for all 
$e\in I_{x_0}^*\backslash I_{x_0,\beta h}$, we must have
$\Gamma^e\cap\Gamma_{x_0,(\beta-1)h}=\emptyset$, or 
equivalently $\Omega^e\cap\Omega_{x_0,(\beta-1)h}=\emptyset$ 
in terms of local Cartesian coordinate domains. 
Since $D_{x_0,(\beta-1)h/L}\subset \Omega_{x_0,(\beta-1)h}$
by (\ref{LemmaFive6b}), and $\beta\ge 10L+1$ by assumption, 
this implies
\begin{equation}
0 < 10h < |\xi|,
\quad
\forall \xi\in\Omega^e,
\quad
e\in I_{x_0}^*\backslash I_{x_0,\beta h},
\quad 
E\ge E_{\beta}^*.
\label{LFiveSum2A}
\end{equation}
Moreover, for any two points $\xi,\xi'\in\Omega^e$, with 
polar coordinates $\xi=\rho\hxi$ and $\xi'=\rho'\hxi'$,
we have 
\begin{equation}
\begin{split}
|\hxi'-\hxi| 
&= {|(\rho-\rho')\xi' + \rho'(\xi'-\xi)|\over\rho'\rho} \\
&\le {Ch\over\rho},
\quad
\forall \xi,\xi'\in\Omega^e,
\quad
e\in I_{x_0}^*\backslash I_{x_0,\beta h},
\quad 
E\ge E_{\beta}^*,
\end{split}
\label{LFiveSum2B}
\end{equation}
where the inequality follows from the fact that
$\diam(\Omega^e)\le C\diam(\Gamma^e)\le Ch$ by
the Lipschitz property of the coordinate maps 
$\psi_{x_0}$ and $\psi_{x_0}^{-1}$. 
Furthermore, since $u_{x_0}^{\rm polar}(\rho,\hxi)$ 
is class $C^{0,1}$ in $(\rho,\hxi)$ by (A3)(i), and 
$T_{x_0}(\hxi)$ is class $C^\infty$ in $\hxi$ by 
(\ref{LemmaFive17a}), we notice that 
$F_{x_0}(\hxi) = u_{x_0}^{\rm polar}(0,\hxi) 
D\varphi(x_0)T_{x_0}(\hxi)$ is class $C^{0,1}$ 
in $\hxi$ uniformly in $x_0$.  Hence, for each 
$e\in I_{x_0}^*\backslash I_{x_0,\beta h}$ 
we have, using the fact that $W_q^e>0$, 
and considering each scalar component of 
$F_{x_0}\in\Rk$ separately,
\begin{equation}
\min_{\xi\in\Omega^e} F_{x_0}(\hxi)
\le 
{\tstyle\sum_{q=1}^Q F_{x_0}(\hxi(x_q^e)) W_q^e \over
\tstyle\sum_{q=1}^Q W_q^e}
\le
\max_{\xi\in\Omega^e} F_{x_0}(\hxi).
\label{LFiveSum2C}
\end{equation}
From this we conclude, by the Intermediate Value Theorem, 
that for each scalar component of $F_{x_0}$ there exists 
an $\xi^e=\rho^e\hxi^e\in\Omega^e$ with the property that
\begin{equation}
\sum_{q=1}^Q F_{x_0}(\hxi(x_q^e)) W_q^e
= F_{x_0}(\hxi^e) \sum_{q=1}^Q W_q^e,
\quad
\forall e\in I_{x_0}^*\backslash I_{x_0,\beta h},
\quad 
E\ge E_{\beta}^*.
\label{LFiveSum2D}
\end{equation}
For convenience, we use the same symbol $\hxi^e$ to denote 
the distinguished point for each component of $F_{x_0}$.
In what follows, we will have need to consider the 
difference, denoted by $\Gamma_{x_0}^{*,\Delta}$, 
between the set $\Gamma_{x_0}^*$ and the subset 
$\cup_{e\in I_{x_0}^*\backslash I_{x_0,\beta h}}\Gamma^e$.
From the definition of $I_{x_0}^*$ and $I_{x_0,\beta h}$,
and the fact that $\diam(\Gamma^e)\le h$, we deduce that
\begin{equation}
\Gamma_{x_0}^{*,\Delta}
\;=\;
\Gamma_{x_0}^* \backslash
\Big(\bigcup_{e\in I_{x_0}^*\backslash 
                        I_{x_0,\beta h}}\Gamma^e\Big) 
\;\subset\; 
\Gamma_{x_0,\beta h} \cup N(\partial\Gamma_{x_0}^*,h),
\quad 
E\ge E_{\beta}^*,
\label{LFiveSum2E}
\end{equation}
where $N(\partial\Gamma_{x_0}^*,h)$ denotes the set 
of all $y\in\Gamma_{x_0}^*$ such that 
$\dist(y,\partial\Gamma_{x_0}^*)\le h$, that is, 
$N(\partial\Gamma_{x_0}^*,h)$ is the closed neighborhood
of $\partial\Gamma_{x_0}^*$ in $\Gamma_{x_0}^*$ of 
size $h$.  We note that $\Gamma_{x_0,\beta h}$ and
$N(\partial\Gamma_{x_0}^*,h)$ are disjoint for
$E_{\beta}^*$ sufficiently large.  Moreover, from
(\ref{LemmaFive10}), (\ref{LemmaFive12}) and
(\ref{LemmaFive15}), we deduce that the set 
$\partial\Gamma_{x_0}^*$ is a curve of class 
$C^{0,1}$.

Proceeding with the term $\esB_2$ in (\ref{LFiveDecomp}),
we have, using (\ref{LFiveSum2D}) and (\ref{LFiveSum2}),
and considering each scalar component separately,
\begin{equation}
\begin{split}
\esB_2(x_0) 
&=  \sum_{e\in I_{x_0}^*\backslash I_{x_0,\beta h}}\;
   {\!\!\!\!\!\!\!\!} F_{x_0}(\hxi^e) \sum_{q=1}^Q W_q^e \\
&=  \sum_{e\in I_{x_0}^*\backslash I_{x_0,\beta h}}\;
   {\!\!\!\!\!\!\!\!} F_{x_0}(\hxi^e) |\Gamma^e|  \quad
 + \sum_{e\in I_{x_0}^*\backslash I_{x_0,\beta h}}\;
   {\!\!\!\!\!\!\!\!} F_{x_0}(\hxi^e) 
                   \Big(\sum_{q=1}^Q W_q^e - |\Gamma^e|\Big).
\end{split}
\label{LFiveSum2F}
\end{equation}
Using the fact that 
$\int_{\Gamma_{x_0}^*} F_{x_0}(\hxi(y))\; dA_y = 0$ 
by (\ref{LemmaFive17c}), and the definition of
$\Gamma_{x_0}^{*,\Delta}$ in (\ref{LFiveSum2E}),
we get
\begin{equation}
\begin{split}
\esB_2(x_0) 
&=  \sum_{e\in I_{x_0}^*\backslash I_{x_0,\beta h}}\;
   {\!\!\!\!\!\!\!\!} F_{x_0}(\hxi^e) |\Gamma^e|  \quad
 + \sum_{e\in I_{x_0}^*\backslash I_{x_0,\beta h}}\;
   {\!\!\!\!\!\!\!\!} F_{x_0}(\hxi^e) 
             \Big(\sum_{q=1}^Q W_q^e - |\Gamma^e|\Big) \\
&\hskip2.7in
 -\; \int_{\Gamma_{x_0}^*} F_{x_0}(\hxi(y))\; dA_y \\
&=  \sum_{e\in I_{x_0}^*\backslash I_{x_0,\beta h}}\;
   {\!\!\!\!\!\!\!\!} \Big(\int_{\Gamma^e}
        F_{x_0}(\hxi^e) - F_{x_0}(\hxi(y))\; dA_y\Big) \\
&\hskip0.2in
 + \sum_{e\in I_{x_0}^*\backslash I_{x_0,\beta h}}\;
   {\!\!\!\!\!\!\!\!} F_{x_0}(\hxi^e) 
    \Big(\sum_{q=1}^Q W_q^e - |\Gamma^e|\Big) \quad 
    - \quad  \int_{\Gamma_{x_0}^{*,\Delta}} 
                               F_{x_0}(\hxi(y))\; dA_y \\
&= \quad \esB_{21}(x_0) \quad + \quad
         \esB_{22}(x_0) \quad - \quad \esB_{23}(x_0),
\end{split}
\label{LFiveSum2G}
\end{equation}
where $\esB_{21}(x_0)$, $\esB_{22}(x_0)$ and 
$\esB_{23}(x_0)$ denote the three terms in 
the middle equation of (\ref{LFiveSum2G}).

For the term $\esB_{21}$ in (\ref{LFiveSum2G}), we
notice that, by the Lipschitz property of 
$F_{x_0}(\hxi)$ and (\ref{LFiveSum2B}), we have
\begin{equation}
\begin{split}
|F_{x_0}(\hxi^e) - F_{x_0}(\hxi(y))|
&\le C_\varphi |\hxi^e - \hxi(y)| \\
&\le {C_\varphi h\over \rho(y)}
\le {C_\varphi h\over |y-x_0|},
\quad
\forall e\in I_{x_0}^*\backslash I_{x_0,\beta h},
\quad 
E\ge E_{\beta}^*,
\end{split}
\label{LFiveSum2H}
\end{equation}
where the last inequality follows from the
Lipschitz property of the coordinate map
$\psi_{x_0}$ and the fact that 
$|y-x_0|=|\psi_{x_0}(\xi(y))-\psi_{x_0}(0)|
\le C|\xi(y)|$ and $\rho(y)=|\xi(y)|$.
Hence, for the term $\esB_{21}$ we obtain
the uniform bound
\begin{equation}
\begin{split}
|\esB_{21}(x_0)|
&\le \sum_{e\in I_{x_0}^*\backslash I_{x_0,\beta h}}\;
   {\!\!\!\!} \int_{\Gamma^e}
        |F_{x_0}(\hxi^e) - F_{x_0}(\hxi(y))| \; dA_y \\
&\le C_\varphi h \int_{\Gamma_{x_0}^*} {1\over|y-x_0|} \; dA_y  
 \le C_\varphi h,   
\quad
\forall E\ge E_{\beta}^*,
\quad
x_0\in\Gamma,
\end{split}
\label{LFiveSum2I}
\end{equation}
where the last inequality follows from a direct
estimate of the integral using polar coordinates, 
which shows that the integral is bounded, and the 
constant $C_\varphi$ depends on $\varphi$, but 
is independent of $x_0$.  Thus the result for
$\esB_{21}$ is established.

For the term $\esB_{22}$ in (\ref{LFiveSum2G}), we
first use the quadrature error bound in (A6) for a 
rule of order $\ell\ge 1$ applied to a constant 
function to obtain
\begin{equation}
\Big| \sum_{q=1}^Q W_q^e - |\Gamma^e| \Big|
 =  \Big| \sum_{q=1}^Q W_q^e - \int_{\Gamma^e} \; dA_y \Big| 
\le C |\Gamma^e| h,
\;
\forall e\in I_{x_0}^*\backslash I_{x_0,\beta h},
\; 
E\ge E_{\beta}^*.
\label{LFiveSum2J}
\end{equation}
Using the above, together with the fact that
$|F_{x_0}(\hxi)|\le C_\varphi$ by (A2), (A3)
and the definition of $F_{x_0}$, we then obtain 
the uniform bound
\begin{equation}
\begin{split}
|\esB_{22}(x_0)|
&\le \sum_{e\in I_{x_0}^*\backslash I_{x_0,\beta h}}\;
   {\!\!\!\!} |F_{x_0}(\hxi^e)| \;
          \Big| \sum_{q=1}^Q W_q^e - |\Gamma^e| \Big| \\
&\le \sum_{e\in I_{x_0}^*\backslash I_{x_0,\beta h}}\;
   {\!\!\!\!} C_\varphi |\Gamma^e| h 
 \le C_\varphi |\Gamma_{x_0}^*| h   
 \le C_\varphi h,   
\quad
\forall E\ge E_{\beta}^*,
\quad
x_0\in\Gamma,
\end{split}
\label{LFiveSum2K}
\end{equation}
where the constant $C_\varphi$ depends on $\varphi$, 
but is independent of $x_0$.  Thus the result for 
$\esB_{22}$ is established.

For the term $\esB_{23}$ in (\ref{LFiveSum2G}), we
again use the fact that $|F_{x_0}(\hxi)|\le C_\varphi$ 
as above, and use the inclusion result for
$\Gamma_{x_0}^{*,\Delta}$ in (\ref{LFiveSum2E}), 
to get
\begin{equation}
\begin{split}
|\esB_{23}(x_0)|
&\le \int_{\Gamma_{x_0}^{*,\Delta}} 
                  |F_{x_0}(\hxi(y))|\; dA_y \\
&\le C_\varphi \int_{\Gamma_{x_0,\beta h}} 
                             {\!\!\!\!} dA_y 
    +  C_\varphi \int_{N(\partial\Gamma_{x_0}^*,h)} 
                             {\!\!\!\!} dA_y 
\le C_\varphi h,
\quad
\forall E\ge E_{\beta}^*,
\quad
x_0\in\Gamma,
\end{split}
\label{LFiveSum2L}
\end{equation}
where the last inequality follows from the 
definition of the patch $\Gamma_{x_0,\beta h}$
and the set $N(\partial\Gamma_{x_0}^*,h)$, and the 
constant $C_\varphi$ depends on $\varphi$, but 
is independent of $x_0$.  Thus the result for 
$\esB_{23}$ is established.  The result for 
$\esB_2$ follows from 
(\ref{LFiveSum2L}), (\ref{LFiveSum2K}) and 
(\ref{LFiveSum2I}), and the result stated in
part (ii)(c) follows from these, together with
(\ref{LFiveSum2G}), (\ref{LFiveSum1A}) and
(\ref{LFiveDecomp}).
\end{proof}

\subsection{Proof of main result}

Here we combine the collective compactness result
in Theorem \ref{thmConv} and the results in
Lemmas \ref{LemmaOne} -- \ref{LemmaFive}
to establish our main result in Theorem \ref{ConvThm}.
We consider the locally-corrected Nystr{\"o}m method
defined in (\ref{DefOpApprox})--(\ref{LocMomCondCoeffThree})
with a quadrature rule of arbitrary order $\ell\ge 1$,
a local polynomial correction of arbitrary degree 
$p\ge 0$, and a surface with regularity index $m\ge 0$.  

For part (i), consider any $\ell\ge 1$, $p\ge 0$ and 
$m\ge 0$.  Then, by Lemma \ref{LemmaFour}, the operators 
$\esA$ and $\esA_n$ ($n\ge n_0$) satisfy the collective 
compactness conditions (C1)--(C3).  Hence, by Theorem 
\ref{thmConv}, there exist constants $C_\varphi>0$ and 
$N_\varphi\ge n_0$ such that 
$c \varphi_n - \esA_n\varphi_n = f$ is uniquely 
solvable for $\varphi_n$, and moreover
\begin{equation}
||\varphi_n - \varphi || 
   \le C_\varphi || \esA_n \varphi - \esA\varphi ||, 
\quad\forall n\ge N_\varphi.
\label{Main1}
\end{equation}
By (C1) and the fact that $\varphi\in C^{0}(\Gamma,\Rk)$, 
we have $(\esA_n\varphi)(x)\to (\esA\varphi)(x)$ uniformly 
in $x\in\Gamma$, which in view of (\ref{Main1}) implies
\begin{equation}
||\varphi_n-\varphi|| \to 0 
\quad\hbox{\rm as}\quad
n\to\infty, 
\quad
\forall \ell\ge 1, p\ge 0, m\ge 0. 
\end{equation}
Thus the result in part (i) of the main theorem is 
established.

For convenience, we next consider part (iii), and will 
consider part (ii) afterwards since it requires a 
special treatment.  Accordingly, consider any 
$\ell\ge 1$, $p\ge 1$ and $m\ge 1$, and assume
$\varphi\in C^{m,1}(\Gamma,\Rk)$.  In view of 
(\ref{Main1}), we seek a bound for  
$|| \esA_n \varphi - \esA\varphi ||$.  By definition
of $\esA_n$ and $\esA$, we have
\begin{equation}
|| \esA_n \varphi - \esA\varphi ||
\le || \esG_n \varphi - \esG\varphi ||
  + || \esH_n \varphi - \esH\varphi ||,
\label{Main2}
\end{equation}
where $\esG$ and $\esH$ are the operators defined
in (\ref{DefGHops}), and $\esG_n$ and $\esH_n$ are 
the operators defined in (\ref{DefOpApprox}); we 
continue to use subscripts $n\to\infty$ in place 
of $h_n\to 0$.  

For the first term in (\ref{Main2}), 
we let $x_0\in\Gamma$ be arbitrary and consider the 
function $(G_{x_0}\varphi)(y) = G(x_0,y)\varphi(y)$.
Since $\varphi\in C^{m,1}(\Gamma,\Rk)$ by assumption,
and $G\in C^{m,1}(\Gamma\times\Gamma,\Rkk)$ by (A1), 
it follows that $G_{x_0}\varphi\in C^{m,1}(\Gamma,\Rk)$,
where the Lipschitz constant for each derivative is 
uniform in $x_0$.  In view of the quadrature error bound 
in (A6) for a rule of order $\ell$, we get, by definition 
of $\esG$ and $\esG_n$,
\begin{equation}
\begin{split}
|(\esG_n\varphi)(x_0) - (\esG\varphi)(x_0)| 
&= \Big|\sum_{e=1}^{E} \sum_{q=1}^Q
         G(x_0,x_q^e)\varphi(x_q^e)W_q^e
              - \int_{\Gamma} G(x_0,y)\varphi(y)\; dA_y \Big| \\
&\le \sum_{e=1}^{E}\Big| 
         \sum_{q=1}^Q(G_{x_0}\varphi)(x_q^e)W_q^e
            - \int_{\Gamma^e} (G_{x_0}\varphi)(y)\; dA_y \Big| \\
&\le C_\varphi h^{\min(\ell,m+1)},
\quad
\forall n\ge N_\varphi,
\quad
x_0\in\Gamma.
\end{split}
\label{Main4} 
\end{equation}
In the above, the constant $C_\varphi$ depends on 
$\varphi$, but is independent of $x_0$.  From this 
we deduce the bound
\begin{equation}
|| \esG_n \varphi - \esG\varphi || 
\le C_\varphi h^{\min(\ell,m+1)},
\quad
\forall n\ge N_\varphi.
\label{Main5}
\end{equation}

To bound the second term in (\ref{Main2}), we 
let $T_{x_0}^j\varphi:\Gamma_{x_0,d}\to\Rk$
be the local Taylor polynomial of degree $j$
for $\varphi$ at $x_0\in\Gamma$.  Just as
in (\ref{LocPolyDefn}), the local polynomial 
$(T_{x_0}^j\varphi)(z)$ is defined using local 
Cartesian coordinates $\xi_{x_0}(z)$ in 
$\Gamma_{x_0,d}$.  Specifically,
we have
\begin{equation}
\begin{split}
(T_{x_0}^j\varphi)(z)
&= C_{x_0,0} 
  \;+\;C_{x_0,\alpha_1} \xi_{x_0,\alpha_1}(z)
  \;+\; C_{x_0,\alpha_1\alpha_2} 
                    \xi_{x_0,\alpha_1}(z)\xi_{x_0,\alpha_2}(z) \\
&\hskip0.5in
  + \cdots +\;
  C_{x_0,\alpha_1\alpha_2\cdots\alpha_j} 
     \xi_{x_0,\alpha_1}(z)\xi_{x_0,\alpha_2}(z)\cdots
                                        \xi_{x_0,\alpha_j}(z), \\
\end{split}
\label{LocPolyDefnTay}
\end{equation}
where
\begin{equation}
C_{x_0,0} = \varphi(x_0),
\quad
C_{x_0,\alpha_1}
= {\partial(\varphi\circ\psi_{x_0}) \over 
                 \partial\xi^{\alpha_1}},
\quad
\ldots
\quad
C_{x_0,\alpha_1\cdots\alpha_j}
= {1\over j!}{\partial^j(\varphi\circ\psi_{x_0}) \over 
\partial\xi^{\alpha_1}\cdots\partial\xi^{\alpha_j}}.
\label{LocPolyDefnTayCoeff}
\end{equation}
In the above, all derivatives are evaluated at the 
surface point $x_0$, or equivalently $\xi_{x_0}(x_0)=0$ 
in local Cartesian coordinates.  Since 
$\varphi\in C^{m,1}(\Gamma,\Rk)$, it follows that
$T_{x_0}^j\varphi$ is well-defined for any $0\le j\le m$.
Moreover, in view of the Lipschitz properties of
the coordinates $\xi_{x_0}(z)$ for a surface $\Gamma$
of class $C^{m+1,1}$, it follows that
$T_{x_0}^j\varphi\in C^{m+1,1}(\Gamma_{x_0,d},\Rk)$,
where the Lipschitz constant for each derivative is 
uniform in $x_0$.  Furthermore, by definition of
the locally-corrected operator $\esH_n$, the relation 
in (\ref{LocMomCond}) holds, which implies
\begin{equation}
(\esH_n \, \eta_{x_0}T_{x_0}^j\varphi)(x_0)  
 = (\esH \, \eta_{x_0}T_{x_0}^j\varphi)(x_0), 
\quad
\forall 0\le j\le p,
\quad
x_0\in\Gamma.
\label{LocMomCondTay}
\end{equation}
Here $\eta_{x_0}\in C^{m,1}(\Gamma,[0,1])$ is any given 
cutoff function as described earlier.  Notice that
$\eta_{x_0}T_{x_0}^j\varphi$ is defined at all points 
of the surface and 
$\eta_{x_0}T_{x_0}^j\varphi\in C^{m,1}(\Gamma,\Rk)$,
where the Lipschitz constant for each derivative is
uniform in $x_0$.  

In what follows, we will have need to consider the 
function $\varphi_{x_0}^{\Delta}\in C^{m,1}(\Gamma,\Rk)$
defined by
\begin{equation}
\varphi_{x_0}^{\Delta}(y)
      = [\varphi - \eta_{x_0}T_{x_0}^j\varphi](y),
\quad
j=\min(p,m).
\label{LocDiffFun}
\end{equation}
From the definition of the local Taylor polynomial 
$T_{x_0}^j\varphi$, and the fact that $\eta_{x_0}$ 
is identically equal to one in a fixed neighborhood 
of $x_0$, we deduce that
\begin{equation}
\begin{gathered}
|\varphi_{x_0}^{\Delta}(y)|
\le C_\varphi |y-x_0|^{j+1}, 
\quad
|D^s \varphi_{x_0}^{\Delta}(y)|
\le C_\varphi |y-x_0|^{j+1-s}, \\
\quad 
0\le s\le j,
\quad
x_0,y\in\Gamma,
\end{gathered}
\label{LocDiffTay}
\end{equation}
where $D^s\varphi_{x_0}^{\Delta}$ denotes the surface 
derivative of order $s$.  For any given 
$x_0$, we will also have need to consider the function 
$(H_{x_0}\varphi_{x_0}^{\Delta})(y) 
= H(x_0,y)\varphi_{x_0}^{\Delta}(y)$. 
Using (\ref{LocDiffTay}), together with (A2) and (A3), 
we find
\begin{equation}
\begin{gathered}
|(H_{x_0}\varphi_{x_0}^{\Delta})(y)|
\le C_\varphi |y-x_0|^{j}, 
\quad
|D^s(H_{x_0}\varphi_{x_0}^{\Delta})(y)|
\le C_\varphi |y-x_0|^{j-s}, \\
\quad 
0\le s\le j,
\quad
x_0,y\in\Gamma.
\end{gathered}
\label{LocDiffTayH}
\end{equation}
Here and above $C_\varphi$ is a constant depending 
on $\varphi$, but is independent of $x_0$ and $y$.  
From (\ref{LocDiffTayH}) with $s=j$, we find that 
the surface derivative of $H_{x_0}\varphi_{x_0}^{\Delta}$
of order $j$ is uniformly bounded, namely
$|D^j(H_{x_0}\varphi_{x_0}^{\Delta})(y)|\le C_\varphi$ 
for all $x_0,y\in\Gamma$.  Thus it follows that 
$H_{x_0}\varphi_{x_0}^{\Delta}\in C^{j-1,1}(\Gamma,\Rk)$, 
where $j=\min(p,m)$ and the Lipschitz constant for 
each derivative is uniform in $x_0$.  This last 
regularity result provides a hint of the delicate 
nature of the case $p=0$ that will be considered later.

Proceeding with the second term in (\ref{Main2}), 
we consider an arbitrary $x_0\in\Gamma$ and use 
(\ref{LocMomCondTay}) and (\ref{LocDiffFun}) to write
\begin{equation}
(\esH_n\varphi - \esH\varphi)(x_0) 
   =(\esH_n\varphi_{x_0}^{\Delta} - \esH\varphi_{x_0}^{\Delta})(x_0).
\label{DiffHop}
\end{equation}
Using the definitions of $\esH$ and $\esH_n$ in (\ref{DiffHop}), 
and the fact that $\zeta_b + \smash{\widehat\zeta_b}=1$, we get
\begin{equation}
\begin{split}
&(\esH_n\varphi - \esH\varphi)(x_0) \\
&\quad 
  = \sum_{b=1}^{n} \zeta_b(x_0)H(x_0,x_b)
                                      \varphi_{x_0}^{\Delta}(x_b)W_b 
  + \sum_{b=1}^{n} \widehat\zeta_b(x_0)R_{x_0}(x_b)
                                         \varphi_{x_0}^{\Delta}(x_b) \\
&\hskip2.5in 
  - \int_\Gamma H(x_0,y)\varphi_{x_0}^{\Delta}(y)\; dA_y, \\
&\quad 
  = \sum_{b=1}^{n} H(x_0,x_b) \varphi_{x_0}^{\Delta}(x_b)W_b 
  - \int_\Gamma H(x_0,y)\varphi_{x_0}^{\Delta}(y)\; dA_y \\
&\hskip0.5in
  + \sum_{b=1}^{n} \widehat\zeta_b(x_0)R_{x_0}(x_b)
                                         \varphi_{x_0}^{\Delta}(x_b) 
  - \sum_{b=1}^{n} \widehat\zeta_b(x_0)H(x_0,x_b) 
                                     \varphi_{x_0}^{\Delta}(x_b)W_b.  
\end{split}
\label{Main6}
\end{equation}
For convenience, we decompose the above into three parts,
namely
\begin{equation}
(\esH_n\varphi - \esH\varphi)(x_0) 
   =\esM_1(x_0) + \esM_2(x_0) - \esM_3(x_0),
\label{Main6Sum}
\end{equation}
where
\begin{align}
\esM_1(x_0)  
 &= \sum_{b=1}^{n} H(x_0,x_b)\varphi_{x_0}^{\Delta}(x_b)W_b 
    - \int_\Gamma H(x_0,y)\varphi_{x_0}^{\Delta}(y)\; dA_y, 
\label{Main6Sum1} \\
\esM_2(x_0)  
 &= \sum_{b=1}^{n} \widehat\zeta_b(x_0)R_{x_0}(x_b)
                               \varphi_{x_0}^{\Delta}(x_b), 
\label{Main6Sum2} \\
\esM_3(x_0)  
 &= \sum_{b=1}^{n} \widehat\zeta_b(x_0)H(x_0,x_b) 
                            \varphi_{x_0}^{\Delta}(x_b)W_b.  
\label{Main6Sum3}
\end{align}
To establish the required result, we proceed to bound each 
of $\esM_1$, $\esM_2$ and $\esM_3$ uniformly in $x_0$.

For the term $\esM_1$ in (\ref{Main6Sum}), we note that 
$H_{x_0}\varphi_{x_0}^{\Delta}\in C^{j-1,1}(\Gamma,\Rk)$
by the discussion following (\ref{LocDiffTayH}).  In view 
of the quadrature error bound in (A6) for a rule of order 
$\ell$, we get
\begin{equation}
\begin{split}
|\esM_1(x_0)| 
&= \Big|\sum_{e=1}^{E} \sum_{q=1}^Q
      (H_{x_0}\varphi_{x_0}^{\Delta})(x_q^e)W_q^e
      - \int_{\Gamma} 
              (H_{x_0}\varphi_{x_0}^{\Delta})(y)\; dA_y \Big| \\
&\le \sum_{e=1}^{E}\Big| 
         \sum_{q=1}^Q(H_{x_0}\varphi_{x_0}^{\Delta})(x_q^e)W_q^e
            - \int_{\Gamma^e} 
              (H_{x_0}\varphi_{x_0}^{\Delta})(y)\; dA_y \Big| \\
&\le C_\varphi h^{\min(\ell,j)}
  =  C_\varphi h^{\min(\ell,p,m)},
\quad
\forall n\ge N_\varphi,
\quad
x_0\in\Gamma,
\end{split}
\label{Main6Sum1A} 
\end{equation}
where the last equality follows from the fact that
$j=\min(p,m)$.  In the above, the constant $C_\varphi$ 
depends on $\varphi$, but is independent of $x_0$.  
Thus a uniform bound for $\esM_1$ is established.

For the term $\esM_2$ in (\ref{Main6Sum}), we consider the 
index set $J_{x_0}=\{b\;|\;\smash{\widehat\zeta_b(x_0)}>0\}$,
and we note by the same arguments that led to 
(\ref{LFourSum3A}), that the number of elements in this 
set is uniformly bounded, namely $|J_{x_0}|\le C$ for all 
$n\ge N_\varphi$ and $x_0\in\Gamma$.  From (\ref{Main6Sum2}) 
we get, using the definition of $J_{x_0}$ and the fact 
that $0\le\smash{\widehat\zeta_b}\le 1$,
\begin{equation}
|\esM_2(x_0)| 
\le \sum_{b=1}^{n}|\widehat\zeta_b(x_0) R_{x_0}(x_b)
                               \varphi_{x_0}^{\Delta}(x_b)| 
\le \sum_{b\in J_{x_0}}|R_{x_0}(x_b)| 
                              |\varphi_{x_0}^{\Delta}(x_b)|.  
\label{Main6Sum2A}
\end{equation}
Since $|R_{x_0}(x_b)|\le C$ by Lemma \ref{LemmaThree}(iii), 
$|\varphi_{x_0}^{\Delta}(x_b)|\le C_\varphi |x_b-x_0|^{j+1}$
by (\ref{LocDiffTay}), and $|x_b-x_0|\le C h$ for all
$b\in J_{x_0}$ by definition of $J_{x_0}$ and (A7), we get
\begin{equation}
|\esM_2(x_0)| 
\le C_\varphi h^{j+1}
 =  C_\varphi h^{\min(p+1,m+1)},
\quad
\forall n\ge N_\varphi,
\quad
x_0\in\Gamma,
\label{Main6Sum2B}
\end{equation}
where the last equality follows from the fact that
$j=\min(p,m)$.  As before, the constant $C_\varphi$
depends on $\varphi$, but is independent of $x_0$.
Thus a uniform bound for $\esM_2$ is established.

For the term $\esM_3$ in (\ref{Main6Sum}), we again 
use the definition of $J_{x_0}$ and the fact 
that $0\le\smash{\widehat\zeta_b}\le 1$, together 
with (\ref{Main6Sum3}), to write
\begin{equation}
|\esM_3(x_0)|  
\le \sum_{b=1}^{n} |\widehat\zeta_b(x_0)H(x_0,x_b) 
                             \varphi_{x_0}^{\Delta}(x_b)W_b|  
\le \sum_{b\in J_{x_0}} 
                 |(H_{x_0}\varphi_{x_0}^{\Delta})(x_b)| W_b.  
\label{Main6Sum3A}
\end{equation}
Since $|(H_{x_0}\varphi_{x_0}^{\Delta})(x_b)|\le 
C_\varphi |x_b-x_0|^{j}$ by (\ref{LocDiffTayH}), 
and $|x_b-x_0|\le C h$ for all $b\in J_{x_0}$ 
by definition of $J_{x_0}$ and (A7), 
and $W_b\le C h^2$ by (A5), 
we get
\begin{equation}
|\esM_3(x_0)| 
\le C_\varphi h^{j+2}
 =  C_\varphi h^{\min(p+2,m+2)},
\quad
\forall n\ge N_\varphi,
\quad
x_0\in\Gamma,
\label{Main6Sum3B}
\end{equation}
where the last equality follows from the fact that
$j=\min(p,m)$.  Again, the constant $C_\varphi$
depends on $\varphi$, but is independent of $x_0$.
Thus a uniform bound for $\esM_3$ is established.
Combining (\ref{Main6Sum3B}), (\ref{Main6Sum2B}),
(\ref{Main6Sum1A}) and (\ref{Main6Sum}), we obtain
a bound for the second term in (\ref{Main2}), namely
\begin{equation}
|| \esH_n \varphi - \esH\varphi || 
\le C_\varphi h^{\min(\ell,p,m)},
\quad
\forall n\ge N_\varphi.
\label{Main18}
\end{equation}
Combining (\ref{Main18}), (\ref{Main5}), (\ref{Main2}) 
and (\ref{Main1}), we get
\begin{equation}
||\varphi_n-\varphi|| \le C_\varphi h^{\min(\ell,p,m)}
\quad\hbox{\rm as}\quad
n\to\infty, 
\quad
\forall \ell\ge 1, p\ge 1, m\ge 1. 
\end{equation}
Thus the result stated in part (iii) of the main theorem
is established.

For part (ii) of the main theorem, we assume $p=0$ and 
consider any $\ell\ge 1$ and $m\ge 1$.  In view of 
(\ref{Main1}), we again seek a bound for 
$|| \esA_n \varphi - \esA\varphi ||$ and consider 
the decomposition in (\ref{Main2}).   The term
$|| \esG_n \varphi - \esG\varphi ||$ can be bound
exactly as in (\ref{Main5}).  Hence we focus on 
the term $|| \esH_n \varphi - \esH\varphi ||$,
which requires a different treatment than 
before due to the limited regularity of 
$H_{x_0}\varphi_{x_0}^{\Delta}$ in the case $p=0$.
To this end, we consider an arbitrary $x_0\in\Gamma$
and let $\Gamma_{x_0}^*$ and $I_{x_0}^*$ be as in 
Lemma \ref{LemmaFive}(ii).  Then, using (\ref{DiffHop}) 
and the definitions of $\esH$ and $\esH_n$, we get
\begin{equation}
\begin{split}
&(\esH_n\varphi - \esH\varphi)(x_0) \\
&\quad 
  = \sum_{b=1}^{n} \zeta_b(x_0)H(x_0,x_b)
                                      \varphi_{x_0}^{\Delta}(x_b)W_b 
  + \sum_{b=1}^{n} \widehat\zeta_b(x_0)R_{x_0}(x_b)
                                         \varphi_{x_0}^{\Delta}(x_b) \\
&\hskip2.5in 
  - \int_\Gamma H(x_0,y)\varphi_{x_0}^{\Delta}(y)\; dA_y, \\
&\quad 
  = \sum_{e\in I_{x_0}^*} \left[\sum_{q=1}^{Q}
     \zeta_q^e(x_0)H(x_0,x_q^e) \varphi_{x_0}^{\Delta}(x_q^e)W_q^e 
  - \int_{\Gamma^e} 
      H(x_0,y)\varphi_{x_0}^{\Delta}(y)\; dA_y \right] \\
&\quad 
  + \sum_{e\not\in I_{x_0}^*} \left[\sum_{q=1}^{Q}
     \zeta_q^e(x_0)H(x_0,x_q^e) \varphi_{x_0}^{\Delta}(x_q^e)W_q^e 
  - \int_{\Gamma^e} 
      H(x_0,y)\varphi_{x_0}^{\Delta}(y)\; dA_y \right]\\
&\hskip2.5in
  + \sum_{b=1}^{n} \widehat\zeta_b(x_0)R_{x_0}(x_b)
                                       \varphi_{x_0}^{\Delta}(x_b). 
\end{split}
\label{Main23}
\end{equation}

For the first term in the first bracketed expression in 
(\ref{Main23}), we observe that
$\zeta_q^e(x_0)H(x_0,x_q^e) \varphi_{x_0}^{\Delta}(x_q^e)=0$
if $x_q^e=x_0$ by (\ref{LocDiffTayH}) and (A7).  Thus, using
the notation from Lemma \ref{LemmaFive} and the fact that
$\zeta_q^e + \smash{\widehat\zeta_q^e}=1$, we have
\begin{equation}
\begin{split}
&\sum_{q=1}^{Q}
   \zeta_q^e(x_0)H(x_0,x_q^e) 
                   \varphi_{x_0}^{\Delta}(x_q^e)W_q^e \\
&= \sum\nolimits_{1\le q\le Q}^{'}
           \zeta_q^e(x_0)H(x_0,x_q^e) 
                   \varphi_{x_0}^{\Delta}(x_q^e)W_q^e, \\
&= \sum\nolimits_{1\le q\le Q}^{'}
           H(x_0,x_q^e) 
                   \varphi_{x_0}^{\Delta}(x_q^e)W_q^e 
 - \sum\nolimits_{1\le q\le Q}^{'}
           \widehat\zeta_q^e(x_0)H(x_0,x_q^e) 
                   \varphi_{x_0}^{\Delta}(x_q^e)W_q^e. \\
\end{split}
\end{equation}
Using the definition of $U_{x_0,\varphi}$ given in 
(\ref{Ufun}), and the definition of $\varphi_{x_0}^{\Delta}$  
given in (\ref{LocDiffFun}) with $p=0$, we get
\begin{equation}
\begin{split}
&\sum_{q=1}^{Q}
   \zeta_q^e(x_0)H(x_0,x_q^e) 
                   \varphi_{x_0}^{\Delta}(x_q^e)W_q^e \\
&= \sum\nolimits_{1\le q\le Q}^{'}
           U_{x_0,\varphi}(x_q^e)W_q^e 
 - \sum\nolimits_{1\le q\le Q}^{'}
           \widehat\zeta_q^e(x_0)H(x_0,x_q^e) 
                   \varphi_{x_0}^{\Delta}(x_q^e)W_q^e \\
&\hskip1.4in
 + \sum\nolimits_{1\le q\le Q}^{'}
           u_{x_0}^{\rm polar}(0,\hxi(x_q^e)) 
                    D\varphi(x_0)T_{x_0}(x_q^e) W_q^e. 
\end{split}
\label{Main24}
\end{equation}

Similarly, for the second term in the first bracketed 
expression in (\ref{Main23}), we use Lemma 
\ref{LemmaFive}(ii)(b) and (\ref{LFiveSum2E}),
together with the definitions of $U_{x_0,\varphi}$ 
and $\varphi_{x_0}^{\Delta}$, to write
\begin{equation}
\begin{split}
&\sum_{e\in I_{x_0}^*}
   \int_{\Gamma^e} H(x_0,y)\varphi_{x_0}^{\Delta}(y)\; dA_y \\
&=\sum_{e\in I_{x_0}^*}
   \int_{\Gamma^e} H(x_0,y)\varphi_{x_0}^{\Delta}(y)\; dA_y 
 - \int_{\Gamma_{x_0}^*} 
            u_{x_0}^{\rm polar}(0,\hxi(y)) 
                             D\varphi(x_0)T_{x_0}(y)\; dA_y, \\
&=\sum_{e\in I_{x_0}^*}
   \int_{\Gamma^e} U_{x_0,\varphi}(y)\; dA_y 
 - \int_{\Gamma_{x_0}^{*,\Delta}} 
            u_{x_0}^{\rm polar}(0,\hxi(y)) 
                             D\varphi(x_0)T_{x_0}(y)\; dA_y. \\
\end{split}
\label{Main25}
\end{equation}

Substituting (\ref{Main25}) and (\ref{Main24}) into
(\ref{Main23}), and using the fact that 
$U_{x_0,\varphi}(x_q^e)=0$ if $x_q^e=x_0$ by Lemma 
\ref{LemmaFive}(i), we get the decomposition
\begin{equation}
(\esH_n\varphi - \esH\varphi)(x_0) 
   =\esN_1(x_0) + \esN_2(x_0) - \esN_3(x_0)
                         + \esN_4(x_0) + \esN_5(x_0),
\label{Main26Sum}
\end{equation}
where
\begin{equation}
\esN_1(x_0)  
=\sum_{e\in I_{x_0}^*} \left[\sum_{q=1}^{Q}
     U_{x_0,\varphi}(x_q^e)W_q^e 
  - \int_{\Gamma^e} 
      U_{x_0,\varphi}(y)\; dA_y \right],
\label{Main26Sum1} 
\end{equation}
\begin{equation}
\begin{split}
\esN_2(x_0)  
&=\sum_{e\in I_{x_0}^*}\sum\nolimits_{1\le q\le Q}^{'}
     u_{x_0}^{\rm polar}(0,\hxi(x_q^e)) 
                    D\varphi(x_0)T_{x_0}(x_q^e)W_q^e \\
&\hskip1.0in 
 + \int_{\Gamma_{x_0}^{*,\Delta}} 
          u_{x_0}^{\rm polar}(0,\hxi(y)) 
                     D\varphi(x_0)T_{x_0}(y)\; dA_y, 
\end{split}
\label{Main26Sum2} 
\end{equation}
\begin{equation}
\esN_3(x_0)  
 = \sum_{e\in I_{x_0}^*}\sum\nolimits_{1\le q\le Q}^{'}
             \widehat\zeta_q^e(x_0)H(x_0,x_q^e) 
                       \varphi_{x_0}^{\Delta}(x_q^e)W_q^e, 
\label{Main26Sum3} 
\end{equation}
\begin{equation}
\esN_4(x_0)  
=\sum_{e\not\in I_{x_0}^*} \left[\sum_{q=1}^{Q}
     \zeta_q^e(x_0)H(x_0,x_q^e) 
                        \varphi_{x_0}^{\Delta}(x_q^e)W_q^e 
  - \int_{\Gamma^e} 
          H(x_0,y)\varphi_{x_0}^{\Delta}(y)\; dA_y \right],
\label{Main26Sum4} 
\end{equation}
\begin{equation}
\esN_5(x_0)  
=\sum_{b=1}^{n} \widehat\zeta_b(x_0)R_{x_0}(x_b)
                              \varphi_{x_0}^{\Delta}(x_b). 
\label{Main26Sum5}
\end{equation}
To establish the required result, we proceed to bound each
of $\esN_1,\ldots,\esN_5$ uniformly in $x_0$.

For the term $\esN_1$, we note that
$\Gamma^e\subset \Gamma_{x_0}^*\subset \Gamma_{x_0,d}$
for all $e\in I_{x_0}^*$ by definition, and 
$U_{x_0,\varphi}\in C^{0,1}(\Gamma_{x_0,d},\Rk)$
with a Lipschitz constant that is uniform in $x_0$
by Lemma \ref{LemmaFive}(i).  In view of the quadrature 
error bound in (A6) for a rule of order $\ell$, we get
\begin{equation}
\begin{split}
|\esN_1(x_0)| 
&\le \sum_{e\in I_{x_0}^*}\Big| 
        \sum_{q=1}^Q U_{x_0,\varphi}(x_q^e)W_q^e
        - \int_{\Gamma^e} U_{x_0,\varphi}(y)\; dA_y \Big| \\
&\le C_\varphi h^{\min(\ell,1)}
  =  C_\varphi h,
\quad
\forall n\ge N_\varphi,
\quad
x_0\in\Gamma.
\end{split}
\label{Main26Sum1A} 
\end{equation}
For the term $\esN_2$, we use Lemma \ref{LemmaFive}(ii)(c) 
and (\ref{LFiveSum2L}) to conclude directly that
\begin{equation}
\begin{split}
|\esN_2(x_0)| \le C_\varphi h,
\quad
\forall n\ge N_\varphi,
\quad
x_0\in\Gamma.
\end{split}
\label{Main26Sum2A} 
\end{equation}
For the term $\esN_3$, we use the same arguments that 
led to (\ref{Main6Sum3B}) to get
\begin{equation}
|\esN_3(x_0)| 
\le C_\varphi h^{\min(p+2,m+2)}
= C_\varphi h^2,
\quad
\forall n\ge N_\varphi,
\quad
x_0\in\Gamma.
\label{Main26Sum3A}
\end{equation}
For the term $\esN_5$, we use the same arguments that 
led to (\ref{Main6Sum2B}) to get
\begin{equation}
|\esN_5(x_0)| 
\le C_\varphi h^{\min(p+1,m+1)} =  C_\varphi h,
\quad
\forall n\ge N_\varphi,
\quad
x_0\in\Gamma.
\label{Main26Sum5A}
\end{equation}
In the above, the constant $C_\varphi$ depends on 
$\varphi$, but is independent of $x_0$.  Thus 
uniform bounds for $\esN_1$, $\esN_2$, 
$\esN_3$ and $\esN_5$ are established.

It remains to bound the term $\esN_4$.  For every
$x_0\in\Gamma$, we note from Lemma \ref{LemmaFive}(ii)(a) 
that the set $\Gamma_{x_0}^*$ contains the patch 
$\Gamma_{x_0,d/C}$.  Since $\diam(\Gamma^e)\le h$
and $\diam(\supp(\widehat\zeta_q^e))\le Ch$,
we have, after increasing the size of $N_\varphi$ 
if necessary,  
\begin{equation}
\begin{gathered}
{d\over 2C} \le {d\over C}-h \le |y-x_0|,
\quad
\widehat\zeta_q^e(x_0)=0,
\quad
\zeta_q^e(x_0)=1, \\
\forall y\in\Gamma^e,
\quad
e\not\in I_{x_0}^*,
\quad
q=1,\ldots,Q,
\quad
n\ge N_\varphi,
\quad
x_0\in\Gamma.
\end{gathered}
\end{equation}
Using the above, and the fact that $H(x_0,y)$ is 
class $C^{m,1}$ on the set $|y-x_0|\ge d/(2C)$ by 
(A2), and $\varphi_{x_0}^{\Delta}$ is class $C^{m,1}$
by (\ref{LocDiffFun}), both with Lipschitz constants for 
each derivative uniform in $x_0$, together with the quadrature 
error bound in (A6) for a rule of order $\ell$, we get
\begin{equation}
\begin{split}
|\esN_4(x_0)|  
&\le\sum_{e\not\in I_{x_0}^*} \Big| \sum_{q=1}^{Q}
     (H_{x_0}\varphi_{x_0}^{\Delta})(x_q^e)W_q^e 
   - \int_{\Gamma^e} 
     (H_{x_0}\varphi_{x_0}^{\Delta})(y)\; dA_y \Big| \\
&\le C_\varphi h^{\min(\ell,m+1)},
\quad
\forall n\ge N_\varphi,
\quad
x_0\in\Gamma.
\end{split}
\label{Main26Sum4A} 
\end{equation}
As before, the constant $C_\varphi$ depends on
$\varphi$, but is independent of $x_0$.  
Thus a uniform bound for $\esN_4$ is established.
Combining (\ref{Main26Sum4A}), (\ref{Main26Sum5A}),
(\ref{Main26Sum3A}), (\ref{Main26Sum2A}), 
(\ref{Main26Sum1A}) and (\ref{Main26Sum}), we obtain
the bound
\begin{equation}
|| \esH_n \varphi - \esH\varphi || 
\le C_\varphi h,
\quad
\forall n\ge N_\varphi.
\label{Main26SumBnd}
\end{equation}
Combining (\ref{Main26SumBnd}), (\ref{Main5}), 
(\ref{Main2}) and (\ref{Main1}), we get
\begin{equation}
||\varphi_n-\varphi|| \le C_\varphi h
\quad\hbox{\rm as}\quad
n\to\infty, 
\quad
\forall \ell\ge 1, p=0, m\ge 1. 
\end{equation}
Thus the result stated in part (ii) of the main theorem
is established.

%----------------------------------------------------------------------
% References
%----------------------------------------------------------------------

\bibliographystyle{amsplain}
\bibliography{nystrom}

\end{document}